\newtheorem{theorem}{Theorem}[section]
\newtheorem{lemma}{Lemma}[section]
\newtheorem{lemmaA}{Lemma}[section]
\newtheorem{prop}{Proposition}[section]
\newtheorem{propA}{Proposition}[section]
\newtheorem{define}{Definition}[section]
\newtheorem{corollary}{Corollary}[section]
\newtheorem{remark}{Remark}[section]
\newtheorem*{theorem*}{Theorem}
\newtheorem*{lemma*}{Lemma}
\newtheorem*{prop*}{Proposition}
\newtheorem*{define*}{Definition}
\newtheorem*{corollary*}{Corollary}
\numberwithin{equation}{section}
\DeclareMathOperator{\diver}{div}
\DeclareMathOperator{\curl}{curl}
\newcommand{\R}{\ensuremath{\mathbb{R}}}
\newcommand{\C}{\ensuremath{\mathbb{C}}}
\newcommand{\N}{\ensuremath{\mathbb{N}}}
\newcommand{\ip}[2]{\langle#1\hspace*{.5mm},#2\rangle}
\newcommand{\norm}[3][]{#1\left\|#2#1\right\|_{#3}}
\newcommand{\ep}{\varepsilon}
\newcommand{\ex}{\mathrm{ex}}
\newcommand{\loc}{\mathrm{loc}}
\renewcommand{\u}{\mathbf{u}}
\newcommand{\A}{\mathbf{A}}
\newcommand{\pmin}{\ensuremath{\rho_\varepsilon}}
\newcommand{\fen}{\ensuremath{F_{\varepsilon, \pmin}}}
\newcommand{\critfield}{\ensuremath{H^{\varepsilon}_{c_1}}}
\newcommand{\lsol}{\ensuremath{L_{\sigma}}}
\newcommand{\hsolhom}{\ensuremath{\dot{H}_{\sigma}}}
\newcommand{\hsolt}{\ensuremath{H_{\sigma,T}}}
\newcommand{\phipmin}[1]{\phi_{#1,\pmin^2}}
\newcommand{\bpmin}[1]{B_{#1,\pmin^2}}
\newcommand{\erho}[1]{E_{\ep,\pmin}(u,#1)}
\newcommand{\meisconf}{\ensuremath{\pmin e^{i h_\ex \phi_\ep^0}, h_\ex A^0_\ep}}
\newcommand{\err}{\mathfrak{r}}
\newcommand{\minsp}{H^1(\Omega,\C) \times [A_{\ex} + H_{\mathrm{curl}}(\R^3,\R^3) ]}
\definecolor{purple}{rgb}{0.63, 0.36, 0.94}
\newcommand{\CC}{\mathscr C}
\newcommand{\de}{\delta}
\newcommand{\RR}{\mathrm{R}}
\newcommand{\GG}{\mathfrak G}
\newcommand{\RRR}{\mathfrak R}
\renewcommand{\H}{\mathcal H}
\renewcommand{\d}{\mathfrak d}
\newcommand{\een}{\ensuremath{E_{\varepsilon, \pmin}}}
\newcommand{\eed}{\ensuremath{e_{\varepsilon, \pmin}}}
\newcommand{\dist}{\mathrm{dist}}
\newcommand{\NN}{\mathcal N(\Omega)}
\newcommand{\ga}{\Gamma}
\title[First critical field for 3D pinned Ginzburg--Landau]{First critical field in the pinned three dimensional Ginzburg--Landau model of superconductivity}
\author{Mat\'{i}as D\'{i}az-Vera}
\address{Facultad de Matem\'aticas, Pontificia Universidad Cat\'olica de Chile, Vicu\~na Mackenna 4860, 7820436 Macul, Santiago, Chile}
\email{midiaz8@uc.cl}
\author{Carlos Rom\'{a}n}
\address{Facultad de Matem\'aticas e Instituto de Ingenier\'ia Matem\'atica y Computacional, Pontificia Universidad Cat\'olica de Chile, Vicu\~na Mackenna 4860, 7820436 Macul, Santiago, Chile}
\email{carlos.roman@uc.cl}
\date{\today}
\keywords{Ginzburg--Landau, pinning, inhomogeneous, weighted isoflux problem, Meissner solution, first critical field, energy minimizers, vortices}
\subjclass{35Q56 (35J20, 35B40, 82D55)}
\thanks{Funding information: ANID FONDECYT 1231593.}
\begin{document}
\begin{abstract} 
We study extreme type-II superconductors described by the three dimensional magnetic Ginzburg--Landau functional incorporating a pinning term $a_\ep(x)$, which we assume to be a bounded measurable function satisfying $b\leq a_\ep(x)\leq 1$ for some constant $b>0$. A hallmark of such materials is the formation of vortex filaments, which emerge when the applied magnetic field exceeds the first critical field $H_{c_1}$. In this work, we provide a lower bound for this critical field and provide a characterization of the Meissner solution, that is, the unique vortexless configuration that globally minimizes the energy below $H_{c_1}$. Moreover, we show that the onset of vorticity is intrinsically linked to a weighted variant of the \emph{isoflux} problem studied in \cites{Rom-CMP,roman-sandier-serfaty}. A crucial role is played by the $\ep$-level tools developed in \cite{roman-vortex-construction}, which we adapt to the weighted Ginzburg--Landau framework.
\end{abstract}
\maketitle

\section{Introduction}
	\subsection{The problem and brief state of the art}
	Superconductors are specific metals and alloys that, when cooled below a critical temperature (usually extremely low), lose all electrical resistance, enabling currents to flow indefinitely without energy dissipation. This phenomenon was first observed by H. Kamerlingh Onnes in 1911. To describe it, Ginzburg and Landau developed the Ginzburg--Landau model of superconductivity in 1950 \cite{GinLan}, which has proven highly effective in predicting the behavior of superconductors. This model was later shown to be a limiting case of the Bardeen--Cooper--Schrieffer (BCS) theory of superconductivity (see \cites{BCS,FHSS}). The Ginzburg--Landau model is a fundamental contribution to the field of physics, the development of which led to the awarding of several Nobel Prizes.

    \medskip
	In this article we will be interested in type-II superconductors, in which regions of normal and superconducting phases can coexist within the material. A hallmark of type-II superconductivity is the formation of quantized vortex filaments --- similar in nature to the singularities found in fluid dynamics --- that occur when the material is exposed to an external magnetic field. The magnetic field penetrates the superconductor via these vortices, and superconductivity is locally suppressed.
	
	These vortices are not static; they can move due to internal interactions or external forces. Their motion induces an electric field, which leads to energy dissipation and introduces electrical resistance, which in turn effectively degrades the superconducting state. To mitigate this, inhomogeneities may be introduced into the material to act as pinning sites, anchoring the vortices and limiting their mobility. For a more detailed discussion of the physics underlying vortex pinning and its implications, see \cites{chapman-richardson, chapman-du-gunzburguer-variable-thickness, chapman-richardson-pinning,ding-du} and the references therein.

    \medskip
	The Ginzburg--Landau functional with pinning, which models the state of an inhomogeneous superconducting sample in an applied magnetic field, assuming that the temperature is fixed and below the critical one, is
	\begin{equation}\label{GLenergy}
		GL_\ep(u,A)=\frac12\int_\Omega |\nabla_A u|^2+\frac{1}{2\ep^2}(a_\ep(x)-|u|^2)^2+\frac12\int_{\R^3}|H-H_{\ex}|^2.
	\end{equation}
	Here
	\begin{itemize}
		\item $\Omega$ is a bounded domain of $\R^3$, that we assume to be simply connected with $C^2$ boundary.
		\item $u:\Omega\rightarrow \mathbb{C}$ is the \emph{order parameter}. Its squared modulus (the density of Cooper pairs of superconducting electrons in the BCS quantum theory) indicates the local state of the superconductor: where $|u|^2\approx 1$ the material is in the superconducting phase, where $|u|^2\approx 0$ in the normal phase.
		\item $A:\R^3\rightarrow \R^3$ is the electromagnetic vector potential of the induced magnetic field $H=\curl A$.
		\item $\nabla_A$ denotes the covariant gradient $\nabla-iA$.
		\item $H_{\ex}:\R^3\rightarrow \R^3$ is a given external (or applied) magnetic field. We will assume that $H_\ex=h_\ex H_{0,\ex}$, where $H_{0,\ex}$ is a fixed vector field and $h_\ex$ is a real parameter that can be tuned, which represents the intensity of the external field. 
		\item $\ep>0$ is the inverse of the \emph{Ginzburg--Landau parameter} usually denoted $\kappa$, a non-dimensional parameter depending only on the material. We will be interested in the regime of small $\ep$, corresponding to extreme type-II superconductors. 
		\item $a_{\varepsilon}$ is a function that accounts for inhomogeneities in the material. We will assume that $a_\ep \in L^\infty(\Omega)$ and that it takes values in $[b,1]$, where $b\in (0,1)$ is a constant independent of $\varepsilon$. The regions where $a_\varepsilon = 1$ correspond to sites without inhomogeneities (we also say that there is no pinning in these regions). 
	\end{itemize}
	
	The Ginzburg--Landau model is known to be a $\mathbb U(1)$-gauge theory, which means that all the meaningful physical quantities, including the free-energy within $\Omega$
	$$
	\fen(u,A)\colonequals \frac12\int_\Omega |\nabla_A u|^2+\frac{1}{2\ep^2}(a_\ep(x)-|u|^2)^2+|\curl A|^2,
	$$
	are invariant under the gauge transformations 
	$$
	u\mapsto ue^{i\phi}\quad\mathrm{and}\quad A\mapsto A+\nabla \phi,
	$$
	where $\phi$ is any sufficiently regular real-valued function.
	An extremely important quantity in the analysis of the model is the \emph{vorticity}, which is defined, for any sufficiently regular configuration $(u,A)$, as
	$$
	\mu(u,A)=\curl (iu,\nabla_A u)+\curl A,
	$$
	where $(\cdot,\cdot)$ denotes the scalar product in $\C$ identified with $\R^2$. It corresponds to the gauge-invariant version of twice the \emph{Jacobian} of $u$.
	As $\ep\to 0$, the vorticity essentially concentrates in a sum of quantized Dirac masses supported on co-dimension 2 objects. This together with the crucial fact that most of the energy concentrates around these objects is what has essentially allowed mathematicians to analyze the model. 
	
	\medskip 
	The 2D literature on the Ginzburg–Landau model with pinning is extensive. For a detailed overview of the mathematical aspects of this subject, we refer the interested reader to our recent work \cite{diaz-roman-2d} and the references therein. 
	
	In contrast, the 3D problem remains mostly open. Dos Santos \cite{DosSantos3D} considered the problem without external magnetic field and without gauge, that is, when $A=0$ and $H_\ex=0$. By taking a pinning term (independent of $\ep$) which takes some value $b\in(0,1)$ in a non-empty open and strictly convex set compactly contained in $\Omega$, and equal to $1$ otherwise, he was able to identify the limiting one dimensional singular set of minimizers of the energy, under the prescription of a Dirichlet boundary condition with non-zero degree. This result extends the classical findings \cites{Riv,sandier-surface-ball-construction} for the homogeneous Ginzburg–-Landau model, where the limiting singular set is a mass-minimizing current that corresponds to a minimal connection, a concept introduced in \cite{BreCorLie}. The pinning modifies the structure of the limiting singular set, which, as identified in \cite{DosSantos3D}, forms a geodesic link.

    \medskip
	Recently, Dos Santos, Rodiac, and Sandier \cite{DosRodSan} considered the full Ginzburg–Landau energy with a rapidly oscillating pinning term. They studied both the periodic and random stationary cases, where the oscillations are much faster than $\ep$. Although most of their results concerned the 2D situation, under the assumption that the average (in the periodic case) or the expectation (in the random case) are equal to $1$, in 3D they found that in the limit $\ep \to 0$ the limiting functional is the same as that found by \cite{BalJerOrlSon1} in the homogeneous case, showing that in this special situation one can reduce the study of the problem to the case where the pinning term is replaced by its average or expectation,  thus yielding the homogeneous limiting energy.
	
	In the homogeneous case, when $a_\ep\equiv 1$ in $\Omega$, there has been considerable development in the analysis of the model, especially in the last two decades. In particular, Baldo, Jerrard, Orlandi, and Soner \cites{BalJerOrlSon1, BalJerOrlSon2} described the asymptotic behavior of the energy functional by deriving a mean-field model as $\ep\to 0$. On the other hand, in \cite{roman-vortex-construction}, mathematical tools were developed to study the vortex filaments at the $\ep$-level, which has enabled a detailed description of the vortices when the intensity of the applied field is close to the so-called \emph{first critical field} $H_{c_1}$; see \cites{Rom-CMP,roman-sandier-serfaty, RSS2}.
	
	Below this value, which is of order $O(|\log\ep|)$, as first established by Abrikosov \cite{Abr}, the superconductor remains fully in its superconducting phase, meaning $|u|$ uniformly close to 1, and the applied field is expelled from the material. However, when the field intensity exceeds $H_{c_1}$, vortex filaments appear within the material. The order parameter $u$ vanishes in the center of each vortex core, and the applied field penetrates the material through the vortices.

	Mathematically, the leading order value of the first critical field for the homogeneous functional in 3D was first derived in \cites{BalJerOrlSon2}. More precisely, they proved that the vorticity of a minimizing sequence $(u_\ep,A_\ep)$ of $GL_\ep$ (when $a_\ep\equiv 1$ in $\Omega$), up to extraction, is such that
	\begin{equation}\label{firstweak}
		\frac{\mu(u_\ep,A_\ep)}{|\log\ep|}\ \ \overset{\ep \to 0}\longrightarrow \ \ 
		\left\{
		\begin{array}{cl}
			0&\mathrm{if}\ \lim\limits_{\ep\to0}\dfrac{h_\ex}{|\log\ep|}<\dfrac1{2\RR_0}\\
			\mbox{a non zero limit}&\mathrm{if}\ \lim\limits_{\ep\to0}\dfrac{h_\ex}{|\log\ep|}>\dfrac1{2\RR_0}
		\end{array}
		\right.
	\end{equation}
	in a certain weak topology, where the constant $\RR_0$ is described later. This result gives $H_{c_1}$ up to an error $o(|\log \ep|)$ as $\ep \to 0$ and agrees with the previous work by Alama, Bronsard, and Montero \cite{AlaBroMon} in the special case where $\Omega$ is a ball.
	
	This result was improved in \cite{Rom-CMP}, where it is shown that there exist constants $K_0,K_1>0$ such that 
	\begin{equation*}
		H_{c_1}^0-K_0\log |\log \ep|\leq H_{c_1}\leq H_{c_1}^0+K_1.
	\end{equation*}
	More significantly, given a global minimizer $(u_\ep,A_\ep)$ of $GL_\ep$: 
	\begin{itemize}
		\item if $h_\ex\leq H_{c_1}^0-K_0\log |\log \ep|$, then $\|u_\ep\|_\infty \to 1$ as $\ep\to 0$ and therefore $u_\ep$ does not have vortices;
		\item if $h_\ex\geq H_{c_1}^0+K_1$, then $u_\ep$ does have vortices. 
	\end{itemize}
	In \cite{roman-sandier-serfaty}, the lower bound was improved, replacing $-K_0\log |\log \ep|$ by a constant $-K_0$ in a special situation. This improvement is heavily based on the boundedness of the vorticity of the minimizers of $GL_\ep$ slightly above the first critical field, which is a main result in \cite{roman-sandier-serfaty}.
	
	\medskip
	To our knowledge, there are no results in the mathematics literature concerning the first critical field for the pinned Ginzburg--Landau functional in 3D, which is the main motivation of our article.

	\subsection{Approximation of the Meissner state}
	In this paper, we provide a precise approximation of the \emph{Meissner state}, that is, the unique (modulo gauge-invariance) solution of the Ginzburg--Landau equations without vortex filaments, which, in turn, allows us for providing a lower bound for the first critical field, which is of order $O(|\log\ep|)$ as in the homogeneous case.
	Our approximating configuration is given by
	\begin{equation}\label{meissnerconf}
		(\meisconf),
	\end{equation}
	where
	\begin{itemize}
		\item $\pmin$ is the unique positive real-valued minimizer in $H^1(\Omega)$ of the energy functional without magnetic components
		\begin{equation}\label{energywithoutmagneticterm}
			E_\ep(u) \colonequals \frac{1}{2}\int_\Omega |\nabla u|^2 + \frac{1}{2\ep^2}(a_\ep-|u|^2)^2.
		\end{equation}
		The Euler--Lagrange equation associated to this functional is
		\begin{equation}\label{pde rho}
			\left\{
			\begin{array}{rcll}
				-\Delta \pmin &=& \dfrac{\pmin}{\varepsilon^2}(a_\varepsilon-\pmin^2) &\mathrm{in}\ \Omega\\
				\dfrac{\partial \pmin}{\partial \nu} &=& 0&\mathrm{on}\ \partial \Omega.
			\end{array}
			\right.
		\end{equation}
		Observe that, since $a_\ep$ takes values in $[b,1]$, the maximum principle yields the following uniform bounds for $\pmin$.
        \begin{equation}\label{rho uniform bound}
            b \leq \pmin^2 \leq 1.
        \end{equation}
		
		\item $A^0_\ep$ is the unique minimizer in $[A_{0,\ex} + \hsolhom^1(\R^3,\R^3)]$ of the energy functional
		\begin{equation}\label{reducedenergy}
			J_\ep(A) \colonequals \frac{1}{2}\int_\Omega \pmin^2|A-\nabla \phipmin{A}|^2 + \frac{1}{2}\int_{\R^3} |\curl(A-A_{0,\ex})|^2,
		\end{equation}
		where $\hsolhom^1(\R^3,\R^3)$ is the space of divergence-free vector fields in the homogeneous Sobolev space $\Dot{H}^1(\R^3,\R^3)$ and $\phipmin{A}$ is the unique solution in $H^1(\Omega)$ with zero average, i.e. $\int_\Omega \phipmin{A} = 0$, of the elliptic equation
		\begin{equation}\label{phi rho equation}
			\left\{
			\begin{array}{rcll}
				\diver\left( \pmin^2(A - \nabla \phi_{A,\pmin^2})\right)  &=& 0 &\mathrm{in}\ \Omega\\
				\pmin^2(A-\nabla \phipmin{A}) \cdot \nu &=& 0 &\mathrm{on}\ \partial \Omega.
			\end{array}
			\right.
		\end{equation}
		The structure of the equation implies that 
		\begin{equation}\label{b0 existence}
			A-\nabla \phipmin{A} = \frac{\curl B_A}{\pmin^2}
		\end{equation} 
		for some $B_A$ which satisfies $\diver B_A = 0$ and $B_A \times \nu = 0$. We denote $B^0_\ep=B_{A_\ep^0}$ and $\phi_\ep^0=\phi_{A_\ep^0,\pmin^2}$. Notice that $\phi_\ep^0$ is phase that appears in \eqref{meissnerconf}.
        
        As we shall see next, the vector field $B^0_\ep$ is the 3D analog of the function $\xi_\ep$ appearing in \cite{diaz-roman-2d} in the 2D framework. It belongs to the space $C_T^{0,\gamma}(\Omega,\R^3)$ of $\gamma$-Hölder-continuous vector fields whose tangential component vanishes on $\partial \Omega$, for any $\gamma\in(0,1]$. The regularity of the vector field $B^0_\ep$ mainly depends on the regularity of $H_{0,\ex}$ in $\Omega$. In particular, assuming henceforth $H_{0,\ex} \in L^3(\Omega,\R^3)$, we have that 
		$$
		\|B_\ep^0\|_{C_T^{0,\gamma}(\Omega,\R^3)}\leq C\quad \mbox{for any }\gamma\in(0,1),
        $$
        where the constant $C>0$ does not depend on $\ep$, which is similar to the regularity results for $\xi_\ep$ in 2D. This estimate is proved in Proposition~\ref{prop:regularityB0} and is fundamental to our analysis.
        
        It is worth noting that in the 2D setting, we were able to establish that the previous bound holds for $\gamma=1$. We believe this to be the case in 3D as well, although our method of proof does not extend to the case $\gamma=1$ in 3D. Nevertheless, this limitation does not affect our analysis.
	\end{itemize}
	A crucial result concerning the special configuration \eqref{meissnerconf} is the energy splitting that we present next, which is analogous to \cite{diaz-roman-2d}*{Proposition 1.1}. Before doing so, we must define the appropriate space for the minimization of $GL_\ep$. We will assume throughout the article that $H_{\ex}\in L_{\loc}^2(\R^3,\R^3)$. Since $\diver H_{\ex}=0$ in $\R^3$, in accordance with the nonexistence of magnetic monopoles in Maxwell’s theory of electromagnetism, we deduce that there exists a vector potential $A_\ex\in H^1_{\loc}(\R^3,\R^3)$ such that
    $$
    \curl A_{\ex}=H_{\ex}\quad \mathrm{and}\quad \diver A_{\ex}=0\ \mathrm{in}\ \R^3.
    $$
    We also define $A_{0,\ex}\colonequals h_\ex^{-1}A_\ex$. Observe that we can choose $A_\ex$ in the Coulomb gauge, that is, 
    $$\left\{\begin{array}{rcll}
        \diver A_\ex &=& 0 &\ \mathrm{in}\ \Omega \\
        A_\ex \cdot \nu &=& 0 &\ \mathrm{on}\ \partial \Omega. 
    \end{array}\right.$$
    This special choice for $A_\ex$ yields the following useful estimate in $\Omega$, for any $p \in (1,\infty)$,
    $$\norm{A_\ex}{W^{1,p}(\Omega,\R^3)} \leq C \norm{H_\ex}{L^p(\Omega,\R^3)},$$
    where $C=C(\Omega,p)>0$. Similarly,
    \begin{equation}\label{aex cgauge estimate}
        \norm{A_{0,\ex}}{W^{1,p}(\Omega,\R^3)} \leq C \norm{H_{0,\ex}}{L^p(\Omega,\R^3)}.
    \end{equation}
    
    The natural space for the minimization of $GL_\ep$ in three dimensions is 
    $$
    H^1(\Omega,\C)\times [A_{\ex}+H_{\curl}],
    $$
    where
    $$
    H_{\curl}\colonequals \{A\in H^1_{\loc}(\R^3,\R^3) \ | \ \curl A\in L^2(\R^3,\R^3)\}.
    $$
    We are now ready to present our energy splitting.
	\begin{prop}\label{prop:energy splitting}
		Given any configuration $(\u,\A)\in \minsp$, letting $(u,A)$ be defined via the relation $(\u,\A) = (\pmin u e^{i h_\ex \phi_\ep}, A+h_\ex A^0_\ep)$, we have
		\begin{equation}\label{energy splitting equation}
			GL_\ep(\u,\A) = GL_\ep(\pmin e^{i h_\ex \phi_\ep}, h_\ex A^0_\ep) + \fen(u,A) + \frac{1}{2} \int_{\R^3 \setminus \Omega} |\curl A|^2 - h_\ex \int_\Omega \mu(u,A) \cdot B^0_\ep + \err,
		\end{equation}
		where
		\begin{equation*}
			\err \colonequals \frac{h_\ex^2}{2}\int_\Omega \frac{|\curl B^0_\ep|^2}{\pmin^2}(|u|^2-1).
		\end{equation*}
	\end{prop}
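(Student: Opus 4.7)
The plan is to substitute the decomposition directly into $GL_\ep(\u,\A)$, expand each of the three quadratic pieces, and then use the Euler--Lagrange equations satisfied by $\pmin$ (namely \eqref{pde rho}) and $A_\ep^0$ to reassemble the remaining cross terms into the claimed vortex pairing and error. Using
\[
\nabla_\A \u = e^{ih_\ex \phi_\ep^0}\bigl[\pmin \nabla_A u + u\,\nabla\pmin - ih_\ex \pmin u(A_\ep^0 - \nabla \phi_\ep^0)\bigr],
\]
the expansion of $|\nabla_\A \u|^2$ produces three diagonal pieces $\pmin^2|\nabla_A u|^2$, $|u|^2|\nabla\pmin|^2$, and $h_\ex^2 \pmin^2 |u|^2 |A_\ep^0 - \nabla \phi_\ep^0|^2$, together with three cross pieces. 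The cross piece between $u\nabla\pmin$ and $-ih_\ex\pmin u(A_\ep^0 - \nabla\phi_\ep^0)$ vanishes since $(u,-iu)_\C = 0$; the cross piece between $\pmin \nabla_A u$ and $u\nabla\pmin$ equals $\pmin\nabla\pmin \cdot \nabla |u|^2$ via $\Re(\bar u \nabla_A u) = \tfrac12 \nabla|u|^2$; and the cross piece between $\pmin \nabla_A u$ and $-ih_\ex \pmin u(A_\ep^0-\nabla\phi_\ep^0)$ equals $-2h_\ex \pmin^2(iu,\nabla_A u)\cdot(A_\ep^0 - \nabla \phi_\ep^0)$. In parallel, I would expand the pinning potential as $(a_\ep - \pmin^2|u|^2)^2 = (a_\ep-\pmin^2)^2 + 2(a_\ep-\pmin^2)\pmin^2(1-|u|^2) + \pmin^4(1-|u|^2)^2$, and the magnetic integrand as $|\curl \A - H_\ex|^2 = |h_\ex \curl A_\ep^0 - H_\ex|^2 + 2\curl A \cdot (h_\ex \curl A_\ep^0 - H_\ex) + |\curl A|^2$.

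To handle the leftover $\nabla\pmin$ terms, I would invoke \eqref{pde rho}: multiplying $-\Delta \pmin = \frac{\pmin}{\ep^2}(a_\ep - \pmin^2)$ by $\pmin(1-|u|^2)$ and integrating by parts (the boundary contribution vanishing by the Neumann condition $\partial_\nu \pmin = 0$) gives
\[
\int_\Omega \frac{\pmin^2}{\ep^2}(a_\ep - \pmin^2)(1-|u|^2)\,dx \;=\; \int_\Omega (1-|u|^2)|\nabla\pmin|^2 - \pmin \nabla\pmin \cdot \nabla|u|^2\,dx.
\]
This identity absorbs the stray kinetic leftovers $(|u|^2-1)|\nabla\pmin|^2$ and $\pmin \nabla\pmin \cdot \nabla|u|^2$ together with the potential cross term $\frac{1}{\ep^2}(a_\ep-\pmin^2)\pmin^2(1-|u|^2)$ into the corresponding pieces of $\fen(u,A)$.

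For the vortex pairing, I would apply \eqref{b0 existence}: the kinetic cross term $-h_\ex \int_\Omega \pmin^2(iu,\nabla_A u)\cdot(A_\ep^0 - \nabla \phi_\ep^0)$ becomes $-h_\ex \int_\Omega (iu,\nabla_A u) \cdot \curl B_\ep^0$, and an integration by parts (with no boundary contribution, thanks to $B_\ep^0\times \nu = 0$) yields $-h_\ex \int_\Omega B_\ep^0 \cdot \curl(iu,\nabla_A u) = -h_\ex \int_\Omega B_\ep^0 \cdot [\mu(u,A) - \curl A]$. The first summand is the advertised vortex term; the leftover $+h_\ex \int_\Omega B_\ep^0 \cdot \curl A$ must cancel the magnetic cross term $\int_{\R^3}\curl A \cdot (h_\ex \curl A_\ep^0 - H_\ex)$. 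To see this, I would use the Euler--Lagrange equation for the minimizer $A_\ep^0$ of $J_\ep$, which upon using \eqref{b0 existence} reads $\curl(h_\ex \curl A_\ep^0 - H_\ex) = -h_\ex \curl B_\ep^0$ in $\Omega$ and $=0$ in $\R^3 \setminus \Omega$; pairing with $A \in H_\curl$ via integration by parts (using again the tangential condition $B_\ep^0 \times \nu = 0$ and continuity of tangential traces of $h_\ex \curl A_\ep^0 - H_\ex$ across $\partial\Omega$) converts the magnetic cross term into $-h_\ex \int_\Omega \curl A \cdot B_\ep^0$, giving the required cancellation.

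The remaining piece $\tfrac{1}{2}h_\ex^2 \pmin^2(|u|^2-1)|A_\ep^0 - \nabla\phi_\ep^0|^2$ from the kinetic expansion, rewritten via \eqref{b0 existence} as $\tfrac{h_\ex^2}{2} \frac{|\curl B_\ep^0|^2}{\pmin^2}(|u|^2-1)$, is precisely $\err$; the magnetic integrand over $\R^3 \setminus \Omega$ leaves $\tfrac12 \int_{\R^3 \setminus \Omega}|\curl A|^2$; and the remaining pieces collect into $\fen(u,A)$ and $GL_\ep(\meisconf)$. The main obstacle is the careful bookkeeping of the magnetic cross-term integration by parts across $\partial\Omega$: one must ensure that the interior and exterior contributions match correctly via the distributional identity satisfied by $\curl(h_\ex \curl A_\ep^0 - H_\ex)$ and that the tangential boundary data of $B_\ep^0$ produce no stray surface term, so that the exact cancellation described above goes through. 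The 2D template in \cite{diaz-roman-2d}*{Proposition 1.1} provides the guide for organizing this calculation.
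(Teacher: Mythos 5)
Your proposal is correct and follows essentially the same route as the paper. The only (cosmetic) difference is that the paper invokes the Lassoued--Mironescu decoupling \eqref{lm mag decoupling} as a pre-established lemma to dispose of all $\nabla\pmin$ contributions in one step, obtaining
\[
GL_\ep(\u,\A)=E_\ep(\pmin)+\frac12\int_\Omega\pmin^2\bigl|\nabla_\A(ue^{ih_\ex\phi_\ep^0})\bigr|^2+\frac{\pmin^4}{2\ep^2}(1-|u|^2)^2+\frac12\int_{\R^3}|\curl\A-H_\ex|^2
\]
immediately, whereas you re-derive that identity inline by expanding $\nabla_\A\u$ with the $\pmin$ factor and invoking \eqref{pde rho} tested against $\pmin(1-|u|^2)$. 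From that point on the two arguments are identical: both use \eqref{b0 existence} to turn the kinetic cross term into $-h_\ex\int_\Omega(iu,\nabla_A u)\cdot\curl B_\ep^0$, both appeal to the Euler--Lagrange equation for $A_\ep^0$ (you via the strong form \eqref{b0 r3 euler lagrange} paired with $A$, the paper via the weak form \eqref{weak euler lagrange b0}) to rewrite the magnetic cross term as $-h_\ex\int_\Omega\curl B_\ep^0\cdot A$, both integrate by parts against $B_\ep^0\times\nu=0$ to produce the vorticity $\mu(u,A)$, and both split $|u|^2=1+(|u|^2-1)$ to extract $\err$. So this is the paper's own argument with one of its sub-lemmas unfolded; the bookkeeping in your sketch is consistent and no step would fail.
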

	This splitting was largely driven by a combination of ideas that played a key role in developing the energy splittings presented in \cites{Rom-CMP,diaz-roman-2d}.
	
	\medskip	
	Let us remark that $\err=\err(\ep)$ is negligible, in particular, when $h_\ex=O(|\log\ep|^m)$ for any $m>0$. The first term in the RHS of \eqref{energy splitting equation} captures, with high precision, the minimal energetic cost among vortexless pairs. We have
	$$
	GL_\ep(\meisconf) = E_\ep(\pmin) + h_\ex^2 J_\ep(A^0_\ep),
	$$
	where $E_\ep(\pmin)$ can be interpreted as the energetic cost ``enforced'' by the potential term in \eqref{GLenergy}, whereas the latter term $h_\ex^2 J_\ep(A^0_\ep)$ captures the energetic cost associated to the external field.
	
	Let us also observe that, since $\rho_\ep\geq \sqrt{b}>0$, $\mathbf u$ and $u$ have the same vortices and $\mu(\mathbf u,\mathbf A)\approx \mu(u,A)$. For this reason, the second term on the RHS of \eqref{energy splitting equation} can be interpreted as the energetic cost of the vortex filaments, while the third term represents the magnetic gain associated with them. Hence, the occurrence of vortices strongly depends on the sign of
	\begin{equation}\label{sign}
		F_{\varepsilon,\pmin}(u,A) - h_{\ex} \int_\Omega \mu(u,A) \cdot B^0_\ep.
	\end{equation}
	
	\subsection{\texorpdfstring{$\ep$}{ε}-level estimates} To analyze the sign of \eqref{sign}, the $\ep$-level estimates developed in \cite{roman-vortex-construction}*{Theorem 1.1} become essential. However, their direct application in our context is hindered by the presence of the weight in the free energy $\fen(u,A)$. Our first result overcomes this limitation by generalizing the $\ep$-level tools to the weighted framework.	
	
	\begin{theorem}\label{epleveltools} Assume $a_\ep$ is such that:
		\begin{enumerate}[leftmargin=*,itemsep=3pt,label=\normalfont(\roman*)]
			\item There exist $\alpha\in (0,1)$, $N>0$, and $C_1>0$ (that do not depend on $\ep$) such that
			$$
			\|\pmin\|_{C^{0,\alpha}(\Omega)}\leq C_1|\log\ep|^N.
			$$
			\item There exist $C_2>0$ and $\kappa>0$ (that do not depend on $\ep$) such that $a_\ep$ is constant in the set $\{ x\in \Omega \ | \ \dist(x,\partial\Omega)\leq C_2|\log\ep|^{-\kappa})\}$.
		\end{enumerate}
		Then, the following holds. For any $m,n,M>0$ there exist $C,\ep_0>0$ depending only on $M,m,n$, and $\partial \Omega$ such that, for any $\ep < \ep_0$, if $(u,A)\in H^1(\Omega,\C)\times H^1(\Omega,\R^3)$ satisfies $\fen(u,A) \leq M|\log \ep|^m$, then there exists a polyhedral 1-current $\nu_\ep$ such that:
		\begin{enumerate}[leftmargin=*,itemsep=3pt,label=\normalfont(\arabic*)]
			\item $\nu_\ep/\pi$ is integer multiplicity,
			\item $\partial \nu_\ep = 0$ relative to $\Omega$,
			\item $\mathrm{supp}(\nu_\ep)\subset S_{\nu_\ep}\subset \overline\Omega$ with $|S_{\nu_\ep}| \leq C|\log \ep|^{-q}$, where $q(m,n) \colonequals 3(m+n)$,
			\item
			\begin{equation}\label{weighted lower bound}
				\int_{S_{\nu_\ep}} \pmin^2 |\nabla_A u|^2 + \frac{\pmin^4}{2\ep^2}(1-|u|^2)^2 + |\curl A|^2 \geq |\pmin^2 \nu_\ep|\left(|\log \ep| - C\log|\log \ep| \right) - \frac{C}{|\log \ep|^n},
			\end{equation}
			where $|\pmin^2\nu_\ep|$ denotes the mass of the $1$-current $\pmin^2\nu_\ep$ in $\Omega$\footnote{See Section~\ref{sec:iso} for the notation on currents.},
			\item and for any $\gamma \in (0,1]$ there exists $C_\gamma >0 $ depending only on $\gamma$ and $\partial \Omega$ such that
			\begin{equation}\label{vorticity estimate}
				\norm{\mu(u,A) - \nu_\ep}{(C^{0,\gamma}_T(\Omega,\R^3))^*} \leq C_\gamma \frac{\fen(u,A)+1}{|\log \ep|^{q\gamma}},
			\end{equation}
			where $C_T^{0,\gamma}(\Omega,\R^3)$ denotes the space of $\gamma$-H\"older continuous vector fields defined in $\Omega$ whose tangential component vanishes on $\partial \Omega$. 
			
		\end{enumerate}
	\end{theorem}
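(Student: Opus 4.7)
The plan is to adapt the construction of \cite{roman-vortex-construction}*{Theorem 1.1}, where the unweighted analog of this statement is proved, by inserting the weight $\pmin^2$ at each stage. That construction proceeds in three stages: (a) a 2D vortex-ball construction on transverse slices of $\Omega$; (b) a gluing step across adjacent slices that produces a polyhedral $1$-current; (c) a global mollification/merging that yields the support bound $|S_{\nu_\ep}|\leq C|\log\ep|^{-q}$. Each stage must be revisited in order to carry the weight $\pmin^2$ through, and for this hypothesis (i) on the Hölder control of $\pmin$ and hypothesis (ii) on the constancy of $a_\ep$ near $\partial\Omega$ will play the central role.

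\textbf{Localization of the weight and weighted slicing.} Fix a scale $\eta_\ep\colonequals |\log\ep|^{-s}$ with $s$ chosen so that, by (i), $\operatorname{osc}_Q\pmin^2\leq C|\log\ep|^{N-s\alpha}$ is much smaller than $|\log\ep|^{-n}$ on every cube $Q\subset\overline\Omega$ of side $\eta_\ep$. This reduces us, up to a negligible additive error, to the situation where $\pmin^2$ is locally constant on each cube, as in the freezing argument used in \cite{diaz-roman-2d} in the 2D case. I would then foliate $\Omega$ by planes transverse to each coordinate direction and perform, on each slice, a weighted 2D vortex-ball construction of the type used in \cite{diaz-roman-2d}; on a ball $B_i$ of base point $y_i$ this gives a kinetic contribution $\pmin^2(y_i)|d_i|(|\log\ep|-C\log|\log\ep|)$, together with a potential contribution with weight $\pmin^4(y_i)$ (the exponent $4$ being the natural one once the substitution $v=u/\pmin$ is carried out, since $(a_\ep-|u|^2)^2=(a_\ep-\pmin^2|v|^2)^2\approx\pmin^4(1-|v|^2)^2$ to leading order by definition of $\pmin$). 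Summing the local bounds over all balls and all slices, and using the oscillation estimate to replace $\pmin^2(y_i)$ by $\pmin^2(x)$ pointwise up to an error $o(|\log\ep|^{-n})$, yields the weighted mass $|\pmin^2\nu_\ep|$ on the right-hand side of \eqref{weighted lower bound}.

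\textbf{Boundary treatment and vorticity estimate.} Slices that enter the strip $\{\dist(\cdot,\partial\Omega)\leq C_2|\log\ep|^{-\kappa}\}$ must be handled separately, since the 2D ball construction near $\partial\Omega$ is more delicate in \cite{roman-vortex-construction}. This is where hypothesis (ii) is essential: there $a_\ep$ is constant, and by uniqueness of the positive solution of \eqref{pde rho} so is $\pmin$; the boundary construction of \cite{roman-vortex-construction} can then be applied verbatim after factoring out this constant weight. The gluing of slices into a polyhedral $1$-current $\nu_\ep$ with $\partial\nu_\ep=0$ relative to $\Omega$, as well as the support bound $|S_{\nu_\ep}|\leq C|\log\ep|^{-q}$ with $q=3(m+n)$, follow exactly as in the unweighted case since the weight does not affect the combinatorics of the pasting. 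Finally, the flatness estimate \eqref{vorticity estimate} is inherited from the unweighted analog: test vector fields in $C^{0,\gamma}_T(\Omega,\R^3)$ can be frozen at the base points of the local constructions, and the resulting error is controlled by $|S_{\nu_\ep}|^\gamma\lesssim|\log\ep|^{-q\gamma}$ times the total free energy $\fen(u,A)$, which, crucially, uses the tangential vanishing of the test fields to absorb boundary contributions as in \cite{roman-vortex-construction}.

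\textbf{Main obstacle.} The principal technical difficulty will lie in the gluing step when the weight varies from one cube to the next: one must verify that the weighted mass $|\pmin^2\nu_\ep|$ produced by concatenating local segments does not exceed the sum of the local weighted masses, and that it remains compatible with the local lower bound just established. I expect the right way to handle this is to use the oscillation control on cubes of side $\eta_\ep$ to bound the value of $\pmin^2$ on each glued segment by the local minimum on the neighboring cubes, and to absorb the resulting oscillation errors into the $C/|\log\ep|^n$ term; a careful choice of $s=s(n)$ in the scale $\eta_\ep$ will then close the estimate. Hypothesis (i), which ensures that the oscillation is only polylogarithmically controlled, is exactly what allows such a choice while keeping $\eta_\ep$ much smaller than the relevant physical scales.
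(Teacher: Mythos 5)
Your high-level strategy---adapt the construction of \cite{roman-vortex-construction} by localizing the weight via hypothesis (i), treat the boundary strip via hypothesis (ii), and use the comparison $b^2 F_\ep\leq\fen\leq F_\ep$ to re-use the unweighted grid selection and vorticity estimates---is indeed the one the paper follows. However, there are two genuine gaps.

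First, you claim that because $a_\ep$ is constant on $\{\dist(\cdot,\partial\Omega)\leq C_2|\log\ep|^{-\kappa}\}$, ``by uniqueness of the positive solution of \eqref{pde rho} so is $\pmin$,'' and that the boundary construction can then be applied verbatim. This is false: $\pmin$ solves the elliptic problem \eqref{pde rho} globally on $\Omega$, and local constancy of $a_\ep$ does not force local constancy of $\pmin$. What is true, by Proposition~\ref{prop:rho_ep like a_ep}, is only that $\pmin$ is \emph{exponentially close} to the constant $\sqrt{a_\ep}$ at distance $O(\ep)$ inside the strip. This distinction matters, because the error $|\pmin^2(x)-\min_{\overline\Theta}\pmin^2|$ must be shown to be $o(|\log\ep|^{-n-1})$ after multiplication by $|\nu_{\ep,\Theta}||\log\ep|$; the paper handles this in \eqref{finalest3} precisely through the exponential estimate $Ce^{-c/(\ep|\log\ep|^\kappa)}$, together with a careful choice $\delta\leq|\log\ep|^{-(\kappa+1)}$ of the grid scale.

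Second, and more substantially, the central new ingredient of the proof is missing from your sketch: the weighted ball-construction method \emph{on a surface} (Proposition~\ref{ball} and Corollary~\ref{ball2}). In \cite{roman-vortex-construction}, the energy lower bounds far from and near the boundary are obtained by slicing the grid cubes (and the set $\Theta$) along the level sets of a smooth approximation of a calibration $\zeta$ of the minimal connection, not by planar foliations, and on each such level-set surface a 2D vortex-ball construction in the spirit of Sandier and Jerrard is performed. The weighted version requires a new lower bound on geodesic spheres that separates the two different powers of $\pmin$ appearing in $\eed(u)$ so as to extract the leading factor $\pmin^2$; the paper obtains this by pulling back \cite{diaz-roman-2d}*{Lemma~A.1} via the exponential map and then factoring out $\min_{\tilde\Sigma}\pmin^2$ \emph{globally} on the surface rather than ball-per-ball (the paper explicitly remarks that ball-per-ball precision is unnecessary and would be delicate here, contrary to what your ``base point $y_i$'' picture suggests, since a ball may grow to a macroscopic radius in a single surface). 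Only after this surface-level lower bound, summed over slices and cubes, does hypothesis (i) enter, to convert $\min_{\CC_l}\pmin^2\,|\nu_{\ep,\CC_l}|$ into $|\pmin^2\nu_{\ep,\CC_l}|$ up to $o(|\log\ep|^{-n})$ by shrinking $\delta$ to $|\log\ep|^{-(n+N+1)/\alpha}$. Without this surface-level ball construction and the associated careful choice of $\delta$, your sketch cannot reach the mass term $|\pmin^2\nu_\ep|$ with the stated error $C/|\log\ep|^n$.
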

    \begin{remark}
	The hypotheses on $a_\ep$ are primarily technical and were introduced to facilitate the adaptation of the construction from \cite{roman-vortex-construction} to the weighted framework. If they do not hold, a different (and more intricate) lower bound is obtained in place of \eqref{weighted lower bound}; see Remark~\ref{remark: hyp a_ep not satisfied}. The rest of the statement remains valid in this case.
    \end{remark}
    
	Using this theorem, we can estimate the difference \eqref{sign} from below, which in turn provides a lower bound for the value of $h_\ex$ at which the sign of the expression becomes negative, triggering the onset of vortex filament formation. This leads to a weighted version of the \emph{isoflux} problem introduced and analyzed in \cites{Rom-CMP,roman-sandier-serfaty}.
	
	\subsection{The weighted isoflux problem and main result}\label{sec:iso}
	Given a domain $\Omega \subset \R^3$, 
	we let $\NN$ be the space of normal $1$-currents supported in $\overline\Omega$, with boundary supported on $\partial\Omega$. We denote 
	by $|\cdot |$ the mass of a current. Recall that normal currents are currents with finite mass, whose boundaries have finite mass as well.
	We also let $X$ denote the class of currents in $\NN$ that are simple oriented Lipschitz curves. 
	An element of $X$  must either be a loop contained in $\overline\Omega$ or have its two endpoints on $\partial \Omega$. 
	
	For any vector field $B\in C_T^{0,1}(\Omega,\R^3)$ and any $\ga\in\NN$ we denote by $\ip{\ga}{B}$ the value of $\ga$ applied to $B$, which corresponds to the circulation of the vector field $B$ on $\ga$ when $\ga$ is a curve. If $\mu$ is a $2$-form then $\ip{\mu}{B}$ is the scalar product of the $2$-form $\mu$ and $B$, seen as  a $2$-form in this case, in $L^2(\Omega,\Lambda^2(\R^3))$. Let us now recall the definition of the isoflux problem.
	
	\begin{define}[Isoflux problem]
		The isoflux problem relative to $\Omega $ and a vector field $B\in C_T^{0,1}(\Omega, \R^3)$,  is  the question of maximizing  over $\NN$ the ratio 
		\begin{equation*}
			\RR(\ga):=\dfrac{\ip{\ga}{B}}{|\ga|}.
		\end{equation*}
	\end{define}
	\begin{remark}
		In the homogeneous case $a_\ep\equiv 1$ in $\Omega$, which implies $\pmin\equiv 1$ in $\Omega$, the constant $\RR_0$ that appears in \eqref{firstweak} corresponds to the supremum over $\NN$ of the ratio $\RR$ for the special vector field $B_0$ which appears in the Hodge decomposition of the minimizer $A_0$ of the functional defined in \eqref{reducedenergy} when $\pmin\equiv 1$ in $\Omega$, that is, $B_0$ satisfies \eqref{b0 existence} with $\pmin\equiv 1$ in $\Omega$.
	\end{remark}
	
	Now, given a smooth function $\eta$ and $\ga\in \NN$, we define $\eta \Gamma\in \NN$ by duality, that is, for any 1-form $\omega$ supported in $\Omega$, we let
	\begin{equation*}
		\ip{\eta \Gamma}{\omega} \colonequals \ip{\Gamma}{\eta \omega}.
	\end{equation*} 
	
	\begin{define}[Weighted isoflux problem]\label{def weighted isoflux}
		The weighted isoflux problem relative to $\Omega $, a smooth weight function $\eta$, and a vector field $B\in C_T^{0,1}(\Omega, \R^3)$, is  the question of maximizing  over $\NN$ the ratio 
		\begin{equation*}
			\RR_\eta(\ga):=\dfrac{\ip{\ga}{B}}{|\eta\ga|}.
		\end{equation*}
	\end{define}
	\begin{remark}
	The existence of maximizers for $\RR_\eta$ is ensured by the weak-$\star$ sequential compactness of $\NN$. The argument is the same as the one used in the proof of \cite{roman-sandier-serfaty}*{Theorem 1} when $\eta \equiv 1$.
	\end{remark}
    
	Hereafter we will only be interested in the case $B=B_\ep^0$, where we recall that $B_\ep^0$ is the vector field defined via \eqref{b0 existence} for $A=A_\ep^0$.
	
	By combining the $\ep$-level tools from Theorem \ref{epleveltools} with the energy splitting \eqref{energy splitting equation}, we deduce that the occurrence of vortex filaments is possible only if $h_\ex\geq H_{c_1}^\ep$, where
	\begin{equation}\label{def RRep}
	H_{c_1}^\ep\colonequals \frac{|\log\ep|}{2\RR_\ep}\quad \mbox{and}\quad 
	\RR_\ep\colonequals\sup_{\ga\in X}\RR_{\pmin^2}(\ga)=\sup_{\ga\in X}\frac{\ip{\ga}{B_\ep^0}}{|\pmin^2\ga|}.
	\end{equation}
	The following is our main result.
	\begin{theorem}\label{main result lower bound}
	Suppose $\liminf_{\ep \to 0^+} \RR_\ep> 0$. If the hypotheses on $a_\ep$ in Theorem \ref{epleveltools} hold, then there exist $\ep_0, K_0 > 0$ such that, for any $\ep < \ep_0$ and $h_\ex \leq H_{c_1}^\ep - K_0 \log|\log \ep|$, the global minimizers $(\u,\A)$ of $GL_\ep$ in $\minsp$ are such that, letting $(u,A)=\left(\pmin^{-1}\u e^{-i h_\ex \phi_\ep},\A-h_\ex A_\ep^0\right)$, as $\ep\to 0$, we have
	\begin{enumerate}[leftmargin=*,label={\normalfont (\alph*)}]
		\item\label{item weakvort} $\norm{\mu(u,A)}{\left(C^{0,\gamma}_T(\Omega,\R^3\right)^*} = o(1)$ for any $\gamma \in(0,1]$.
		\item\label{item meisapprox} $GL_\ep(\u,\A) = GL_\ep\left(\pmin e^{i h_\ex \phi_\ep^0},h_\ex A^0_\ep\right) +o(1)$.
	\end{enumerate}
	Moreover, if $(\u,\A) \in \minsp$ is a configuration such that $|\u|>c>0$ for some $c>0$, $GL_\ep(\u,\A)\leq GL_\ep\left(\meisconf\right)$, and $h_\ex \leq \ep^{-\alpha}$ for some $\alpha \in \left(0,\frac{1}{2}\right)$, then, as $\ep \to 0$,
	$$
	GL_\ep(\u,\A)=GL_\ep\left(\meisconf\right)+o(1).
	$$
	\end{theorem}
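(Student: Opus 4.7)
The plan is to combine the energy splitting of Proposition~\ref{prop:energy splitting} with the $\ep$-level machinery of Theorem~\ref{epleveltools} and the weighted isoflux quantity $\RR_\ep$. Testing the minimizer $(\u,\A)$ against the Meissner configuration $(\meisconf)\in\minsp$, the splitting yields
\[
\fen(u,A) + \tfrac12\int_{\R^3\setminus\Omega}|\curl A|^2 - h_\ex\int_\Omega \mu(u,A)\cdot B^0_\ep + \err \leq 0.
\]
The error $\err$ is harmless: with $h_\ex\lesssim|\log\ep|$ and the uniform bounds $\pmin\geq\sqrt{b}$ and $\|\curl B^0_\ep\|_\infty\leq C$, Cauchy--Schwarz (using the PDE \eqref{pde rho} to control $a_\ep-\pmin^2$) absorbs $|\err|$ into $o(1)\fen(u,A)+o(1)$. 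A crude Cauchy--Schwarz on the vorticity-field pairing then gives $\fen(u,A)\leq C|\log\ep|^2$, making $(u,A)$ admissible for Theorem~\ref{epleveltools} with, say, $m=2$.

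Applying Theorem~\ref{epleveltools} yields a polyhedral 1-current $\nu_\ep$. Decomposing $\nu_\ep/\pi$ into its simple oriented constituent curves and invoking \eqref{def RRep} gives the isoflux bound $\ip{\nu_\ep}{B^0_\ep}\leq\RR_\ep|\pmin^2\nu_\ep|$, while the vorticity estimate \eqref{vorticity estimate} with $\gamma\in(0,1)$ chosen so that $q\gamma>2$ handles the replacement error: $|h_\ex\int(\mu-\nu_\ep)\cdot B^0_\ep|\leq o(\fen(u,A))+o(1)$. Combined with the weighted lower bound \eqref{weighted lower bound}, whose weighted integrand I control by $\fen(u,A)$ up to constants using $\pmin^2\leq 1$ and \eqref{pde rho}, the splitting inequality collapses to
\[
|\pmin^2\nu_\ep|\bigl(|\log\ep|-C\log|\log\ep| - 2h_\ex\RR_\ep\bigr) \leq o(1).
\]
The hypothesis $h_\ex\leq H_{c_1}^\ep - K_0\log|\log\ep|$ makes the parenthesized factor $\gtrsim\log|\log\ep|$ for $K_0$ large enough (invoking $\liminf_\ep\RR_\ep>0$), forcing $|\pmin^2\nu_\ep|=o(1/\log|\log\ep|)$. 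Part~\ref{item weakvort} follows from $\|\mu-\nu_\ep\|_{(C^{0,\gamma}_T)^*}=o(1)$ combined with $\|\nu_\ep\|_{(C^{0,\gamma}_T)^*}\leq|\nu_\ep|\leq|\pmin^2\nu_\ep|/b=o(1)$. For~\ref{item meisapprox}, rearranging the splitting produces
\[
GL_\ep(\u,\A)-GL_\ep(\meisconf) \geq |\pmin^2\nu_\ep|\bigl(\tfrac12|\log\ep|-C\log|\log\ep|-h_\ex\RR_\ep\bigr)-o(1) \geq -o(1),
\]
which combined with the minimality bound $\leq 0$ yields the claim.

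For the final statement, the conditions $|\u|>c>0$ and $\pmin\leq 1$ give $|u|\geq c$; by simple connectedness, I write $u=|u|e^{i\varphi}$ with single-valued $\varphi\in H^1(\Omega)$. A direct computation gives $\mu(u,A) = \nabla|u|^2\times\nabla\varphi + \curl\bigl((1-|u|^2)A\bigr)$; integrating the second term by parts (boundary terms vanish since $B^0_\ep\times\nu=0$) leads to
\[
\int_\Omega \mu(u,A)\cdot B^0_\ep = \int_\Omega \nabla|u|^2\times\nabla\varphi\cdot B^0_\ep + \int_\Omega(1-|u|^2)A\cdot\curl B^0_\ep.
\]
The pointwise identity $|\nabla_A u|^2 = |\nabla|u||^2 + |u|^2|\nabla\varphi-A|^2$ together with Cauchy--Schwarz and the $L^\infty$ bounds on $B^0_\ep$ and $\curl B^0_\ep$ give $h_\ex\bigl|\int\mu\cdot B^0_\ep\bigr|\leq o(1)\fen(u,A)+o(1)$, the condition $h_\ex\leq\ep^{-\alpha}$ with $\alpha<1/2$ ensuring $h_\ex^2\ep\to 0$, which also controls $\err$. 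The hypothesis $GL_\ep(\u,\A)\leq GL_\ep(\meisconf)$ and the splitting then force $\fen(u,A)=o(1)$, whence $GL_\ep(\u,\A)=GL_\ep(\meisconf)+o(1)$.

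The principal technical obstacle is the comparison of the weighted integrand in \eqref{weighted lower bound} with the standard Ginzburg--Landau energy density in $\fen(u,A)$: the mismatch between $\pmin^4(1-|u|^2)^2$ and $(a_\ep-|u|^2)^2$ must be handled by carefully exploiting \eqref{pde rho} to bound $a_\ep-\pmin^2$, and one must track the multiplicative constant in front of $|\log\ep|$ in the resulting lower bound on $\fen(u,A)$ so that it is at least $1/2$ (so as to cancel against $2h_\ex\RR_\ep$ in the final inequality when $h_\ex$ is close to $H_{c_1}^\ep$). Getting this constant right is what produces the correct threshold $H_{c_1}^\ep - K_0\log|\log\ep|$.
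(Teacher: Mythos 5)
Your overall strategy for parts~\ref{item weakvort} and~\ref{item meisapprox} mirrors the paper's: energy splitting plus minimality yields $\fen(u,A)=O(h_\ex^2)$, hence $O(|\log\ep|^2)$, so Theorem~\ref{epleveltools} applies; the vorticity estimate \eqref{vorticity estimate} lets you swap $\mu(u,A)$ for the polyhedral current $\nu_\ep$ at negligible cost; the weighted lower bound \eqref{weighted lower bound} combined with $\ip{\nu_\ep}{B^0_\ep}\leq\RR_\ep|\pmin^2\nu_\ep|$ and $h_\ex\leq H_{c_1}^\ep-K_0\log|\log\ep|$ forces $|\pmin^2\nu_\ep|$ to vanish, from which both items follow. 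This is essentially the paper's proof, and it is correct. Two small remarks: the ``principal technical obstacle'' you flag at the end --- reconciling $\pmin^4(1-|u|^2)^2$ with $(a_\ep-|u|^2)^2$ --- is not actually an obstacle, because the Lassoued--Mironescu identity \eqref{lm mag decoupling} is exact, so the $\fen(u,A)$ appearing in the splitting is already $\tfrac12\int_\Omega\pmin^2|\nabla_A u|^2+\tfrac{\pmin^4}{2\ep^2}(1-|u|^2)^2+|\curl A|^2$, which matches the left-hand side of \eqref{weighted lower bound} precisely (up to the factor $2$). There is no error term coming from \eqref{pde rho} to absorb. Also, to bound $\err$ you only need $\|\curl B^0_\ep\|_{L^4}$ (available from Proposition~\ref{prop:regularityB0}), not $\|\curl B^0_\ep\|_{L^\infty}$; see \eqref{R0 bound}.

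For the final statement, your route is genuinely different from the paper's --- you write $u=|u|e^{i\varphi}$, use the identity $\mu(u,A)=\nabla|u|^2\times\nabla\varphi+\curl((1-|u|^2)A)$, and integrate by parts --- whereas the paper cites the vorticity estimate \cite{Rom-CMP}*{Proposition A.1} (valid when $|u|$ is bounded below) and interpolates between $(C^{0,1}_T)^*$ and $(C^0)^*$. Your route has a real gap: it relies on a uniform-in-$\ep$ bound $\|\curl B^0_\ep\|_{L^\infty(\Omega,\R^3)}\leq C$, which is not established in the paper. Proposition~\ref{prop:regularityB0} only gives $\|B^0_\ep\|_{W^{1,q}}\leq C(q)$ for $q<\infty$, and the remark following it explicitly says the argument ``fails when $q=\infty$ or $\gamma=1$.'' Without that bound, after integrating by parts you are left with $\bigl|\int_\Omega(1-|u|^2)(\nabla\varphi-A)\cdot\curl B^0_\ep\bigr|$, and the natural Hölder split puts $1-|u|^2$ and $\nabla\varphi-A$ in $L^2$ and hence needs $\curl B^0_\ep\in L^\infty$. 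The gap can be patched --- interpolate $1-|u|^2$ between $L^2$ (where it carries the $\ep$-gain) and $L^\infty$ (where it is bounded by $1$ since $|\u|\leq 1$ after truncation) to free up a slot for $\curl B^0_\ep\in L^q$ with $q$ large but finite, and then close with Young's inequality --- but that is additional work you did not carry out, and it changes the structure of the estimate. The paper's route via \cite{Rom-CMP}*{Proposition A.1} and interpolation in the dual Hölder scales sidesteps this entirely and is the cleaner path.
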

	The first part of this result characterizes the behavior of global minimizers of $GL_\ep$ below $H_{c_1}^\ep$, showing in particular that
	$$
	H_{c_1}^\ep-K_0\log|\log\ep|\leq H_{c_1}.
	$$ 
	It essentially states that global minimizers are weakly vortexless, in the sense that their vorticities tend to zero as $\ep\to 0$. Moreover, the minimal energy is, up to a small error, equal to the energy of our approximation of the Meissner state.
	
	In contrast, in the weighted 2D and homogeneous 3D settings, we were able to prove a stronger result. Specifically, we showed that (see \cite{diaz-roman-2d}*{Theorem 1.1} and \cite{Rom-CMP}*{Theorem 1.2})
	$$
	\|1-|u|\|_{L^\infty(\Omega,\C)}=o(1).
	$$
	This is due to the absence of clearing out in our weighted 3D setting, which constitutes an interesting and challenging open question that may be addressed independently of the present analysis. 
	
	\medskip
	The final part of our main result establishes that our approximation of the Meissner state is energetically optimal among vortexless configurations, even when $h_\ex$ significantly exceeds $O(|\log \ep|)$. In particular, from our energy splitting, we deduce that to obtain a matching upper bound for the first critical field $H_{c_1}$, it suffices to construct a configuration $(u,A)$ such that \eqref{sign} is negative. One may even take $A=0$. To achieve this, the main idea is to construct $u$ with a single vortex filament located on an ($\ep$-dependent) $1$-current that is nearly optimal for the weighted isoflux problem. This, too, presents a challenging open problem that merits independent investigation.
	
	\medskip
	Finally, we have the following result concerning the first hypothesis of our main result. 
    \begin{prop}\label{prop:conditionliminf}
        Assume $\curl H_{0,\ex} \not\equiv 0$ in $\Omega$. Then, we have
        \begin{equation*}
            \liminf_{\ep \to 0^+} \RR_\ep> 0.
        \end{equation*}
    \end{prop}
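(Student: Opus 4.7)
The plan is to argue by contradiction. Suppose $\liminf_{\ep \to 0^+} \RR_\ep = 0$ and extract a subsequence $\ep_n \to 0$ with $\RR_{\ep_n} \to 0$. By the uniform $C^{0,\gamma}_T$-regularity of $B_\ep^0$ from Proposition~\ref{prop:regularityB0}, Ascoli--Arzel\`a yields a further subsequence with $B_{\ep_n}^0 \to B^0$ uniformly on $\overline{\Omega}$. For every $\ga \in X$, the bound $|\ip{\ga}{B_{\ep_n}^0}| \le \RR_{\ep_n}|\pmin^2 \ga| \le \RR_{\ep_n}|\ga|$ forces $\ip{\ga}{B^0} = 0$, so the circulation of $B^0$ vanishes on every Lipschitz loop in $\overline{\Omega}$. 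Hence $\curl B^0 = 0$ in $\mathcal{D}'(\Omega)$; combined with $\diver B^0 = 0$, the boundary condition $B^0 \times \nu = 0$ on $\partial \Omega$, and the simple connectedness of $\Omega$ (whose boundary is connected by Alexander duality), this forces $B^0 \equiv 0$, and in particular $B_{\ep_n}^0 \to 0$ strongly in $L^p(\Omega)$ for every $p \in [1,\infty]$.

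The key identity comes from the Euler--Lagrange equation for $J_\ep$: for every $\psi \in \dot H^1_{\sigma}(\R^3,\R^3)$,
\begin{equation*}
\int_\Omega \pmin^2 (A_\ep^0 - \nabla \phi_\ep^0) \cdot \psi + \int_{\R^3} (h_\ep^0 - H_{0,\ex}) \cdot \curl \psi = 0,
\end{equation*}
where $h_\ep^0 := \curl A_\ep^0$. Substituting $\pmin^2(A_\ep^0 - \nabla \phi_\ep^0) = \curl B_\ep^0$ and integrating by parts (the boundary contribution vanishes since $B_\ep^0 \times \nu = 0$), then invoking the surjectivity of $\curl \colon \dot H^1_\sigma(\R^3,\R^3) \to L^2_\sigma(\R^3,\R^3)$ from the Helmholtz decomposition, yields
\begin{equation*}
\chi_\Omega B_\ep^0 + h_\ep^0 - H_{0,\ex} = \nabla g_\ep \quad \text{in } L^2(\R^3,\R^3)
\end{equation*}
for some $g_\ep \in \dot H^1(\R^3)$. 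Pairing this identity with $h_\ep^0 - H_{0,\ex}$ and using the $L^2$-orthogonality of gradients and divergence-free fields in $\R^3$ gives
\begin{equation*}
\|h_\ep^0 - H_{0,\ex}\|_{L^2(\R^3)}^2 = -\int_\Omega B_\ep^0 \cdot (h_\ep^0 - H_{0,\ex}) \le \|B_\ep^0\|_{L^2(\Omega)} \|h_\ep^0 - H_{0,\ex}\|_{L^2(\R^3)},
\end{equation*}
so $\|h_{\ep_n}^0 - H_{0,\ex}\|_{L^2(\R^3)} \le \|B_{\ep_n}^0\|_{L^2(\Omega)} \to 0$. Since $A_\ep^0 - A_{0,\ex} \in \dot H^1_\sigma$, for which $\|\cdot\|_{\dot H^1} = \|\curl\cdot\|_{L^2}$ in $\R^3$, this upgrades to $A_{\ep_n}^0 \to A_{0,\ex}$ strongly in $\dot H^1(\R^3,\R^3)$.

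To conclude, I would extract a further subsequence with $\pmin^2 \overset{*}{\rightharpoonup} \bar{\rho}^2$ in $L^\infty(\Omega)$ (with $b \le \bar{\rho}^2 \le 1$) and combine the above with the $\Gamma$-convergence of $J_\ep$ to the natural limiting functional
\begin{equation*}
J_0(A) := \tfrac{1}{2}\int_\Omega \bar{\rho}^2 \left|A - \nabla \phi_{A,\bar{\rho}^2}\right|^2 + \tfrac{1}{2}\int_{\R^3} |\curl(A - A_{0,\ex})|^2
\end{equation*}
to identify $A_{0,\ex}$ as the unique minimizer of $J_0$. Its Euler--Lagrange equation would then force $\bar{\rho}^2(A_{0,\ex} - \nabla \phi_{A_{0,\ex},\bar{\rho}^2})$ to be simultaneously a gradient and a divergence-free field with vanishing normal trace on $\partial \Omega$; on the simply connected $\Omega$ this forces it to vanish, so $A_{0,\ex}$ is a gradient in $\Omega$ and $H_{0,\ex} = \curl A_{0,\ex} \equiv 0$ in $\Omega$, contradicting the hypothesis $\curl H_{0,\ex} \not\equiv 0$ (which in particular implies $H_{0,\ex} \not\equiv 0$).

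The hard part is this final step: rigorously justifying the $\Gamma$-convergence of $J_\ep$ to $J_0$, equivalently the continuous dependence of the weighted Leray projection $\phi_{A,\pmin^2}$ on weak-$*$ limits of the weight $\pmin^2$ when the only available control is the uniform bound $b \le \pmin^2 \le 1$. Handling the subtle interplay between the weak-$*$ convergence of $\pmin^2$ and the weak convergence of $\nabla \phi_\ep^0$ within the variational problem is the technical heart of the argument.
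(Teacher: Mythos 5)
Your approach is genuinely different from the paper's, and --- up to the step you flag as unresolved --- it is correct. The paper derives $B^0_\ep \to 0$ in $L^2(\Omega)$ from \eqref{isoflux lower bound} and then tests the variational identity \eqref{b0 integral equation} directly against a fixed divergence-free $\Phi \in C^\infty_0(\Omega,\R^3)$. Your version proceeds instead via the estimate $\norm{H^0_\ep - H_{0,\ex}}{L^2(\R^3)} \leq \norm{B^0_\ep}{L^2(\Omega)}$, obtained by testing \eqref{weak euler lagrange b0} against $A^0_\ep - A_{0,\ex}$ and integrating by parts; this yields strong convergence $A^0_\ep \to A_{0,\ex}$ in $\dot H^1(\R^3)$ and hence in $L^2(\Omega)$. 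This is a step the paper does not make explicit, and it turns out to be exactly what is needed.

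Where you stop, however, is precisely where the argument becomes easy, and no $\Gamma$-convergence is required. Since $B^0_\ep \to 0$ in $L^2(\Omega)$ while $\norm{\curl B^0_\ep}{L^2(\Omega)}$ stays bounded by \eqref{initialboundenergy}, testing against $\curl\Psi$ for $\Psi\in C^\infty_0(\Omega,\R^3)$ shows $\curl B^0_\ep \rightharpoonup 0$ weakly in $L^2(\Omega)$. By the orthogonality relation \eqref{b0 orthogonality} and $\curl B^0_\ep = \pmin^2(A^0_\ep - \nabla\phi_\ep^0)$,
\begin{equation*}
\int_\Omega \pmin^2 |A^0_\ep - \nabla\phi_\ep^0|^2 = \int_\Omega \curl B^0_\ep \cdot (A^0_\ep - \nabla\phi_\ep^0) = \int_\Omega \curl B^0_\ep \cdot A^0_\ep \longrightarrow 0,
\end{equation*}
the last limit being a weak-times-strong pairing with the strong convergence $A^0_\ep \to A_{0,\ex}$ you established. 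Using $\pmin^2 \geq b$ from \eqref{rho uniform bound}, it follows that $\nabla\phi_\ep^0 \to A_{0,\ex}$ strongly in $L^2(\Omega,\R^3)$. Since the subspace of $L^2$ gradients is closed, $A_{0,\ex}$ is a gradient in $\Omega$, so $H_{0,\ex} = \curl A_{0,\ex} \equiv 0$ in $\Omega$, contradicting the hypothesis --- and in fact giving the stronger conclusion that $H_{0,\ex}\not\equiv 0$ in $\Omega$ already forces $\liminf_{\ep\to 0^+}\RR_\ep > 0$. The orthogonality converts the $\ep$-dependent weighted norm into a pairing you already control, bypassing entirely the weak-$*$ interaction between $\pmin^2$ and $\nabla\phi_\ep^0$ that you were worried about.

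Two small corrections. The inference ``simply connected $\Rightarrow$ $\partial\Omega$ connected'' does not hold in $\R^3$ (a spherical shell is simply connected with disconnected boundary), so invoking Alexander duality there is not justified; fortunately the direct finish above does not use $B^0 \equiv 0$ or boundary connectedness. Also, the Arzel\`a--Ascoli and vanishing-circulation detour to obtain $B^0_\ep \to 0$ is more than needed: \eqref{isoflux lower bound} together with $\pmin^2 \leq 1$ gives $\RR_\ep \geq \norm{B^0_\ep}{L^2(\Omega)}/|\Omega|^{1/2}$ directly, which is all that is used.
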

	
	\subsection*{Outline of the paper} The rest of the paper is organized as follows. In Section \ref{section meissner} we provide the tools to construct the approximating Meissner configuration and prove the energy splitting result, Proposition \ref{prop:energy splitting}. Section~\ref{section isoflux} presents results on lower bounds for solutions of the weighted isoflux problem and establishes Proposition~\ref{prop:conditionliminf}. In Section \ref{section main result} we prove our main result, Theorem \ref{main result lower bound}. In Section \ref{section eplevel} we introduce the appropriate modifications for the proof of the $\ep$-level estimates in the inhomogeneous setting, Theorem \ref{epleveltools}.
	
	\subsection*{Acknowledgments} This work was partially funded by ANID FONDECYT 1231593.

\section{Approximating Meissner configuration and Energy splitting}\label{section meissner}
The main goal of this section is to construct the previously mentioned approximation of the Meissner state given by the configuration $\left(\meisconf\right)$ and prove the associated energy splitting result stated in Proposition~\ref{prop:energy splitting}. We start by giving an overview on a crucial decoupling result regarding $\pmin$, the unique positive minimizer of the energy functional $E_\ep$ defined in \eqref{energywithoutmagneticterm}, and some classical results of the $L^p$-theory of vector fields, which are employed to construct and characterize this special configuration. 

\subsection{Decoupling of the pinning function.}
The following decoupling result establishes a crucial link between inhomogeneous and homogeneous configurations, enabling the generalization of many tools originally developed for the homogeneous case. We observe that $E_\ep$ is both coercive and weakly lower semicontinuous in $H^1(\Omega,\C)$. This ensures the existence of a minimizer $u_\ep$ for $E_\ep$. Moreover, since $E_\ep(|u_\ep|) \leq E_\ep(u_\ep)$, we deduce that $E_\ep$ admits a positive minimizer. The elliptic structure of minimizers of $E_\ep$ given by its Euler--Lagrange equation \eqref{pde rho} allows us to deduce the following decoupling lemma.
\begin{lemmaA}
    Let $u \in H^1(\Omega,\C)$ and $\pmin$ be a positive minimizer of $E_\ep$.
    \begin{enumerate}[leftmargin=*,label={\normalfont (\arabic*)}]
        \item It holds that
        \begin{equation}\label{lm nomag decoupling}
            E_\ep(\pmin u) = E_\ep(\pmin) + \frac{1}{2}\int_\Omega \pmin^2|\nabla u|^2 + \frac{\pmin^4}{2\ep^2}(1-|u|^2)^2.
        \end{equation}

        \item Defining the energy with magnetic field by
        \begin{equation*}
            E_\ep(u,A) \colonequals \frac{1}{2}\int_\Omega |\nabla_A u|^2 + \frac{1}{2\ep^2}(a_\ep-|u|^2)^2,
        \end{equation*}
        we have
        \begin{equation}\label{lm mag decoupling}
            E_\ep(\pmin u, A) = E_\ep(\pmin) + \frac{1}{2}\int_\Omega \pmin^2 |\nabla_A u|^2 + \frac{\pmin^4}{2\ep^2}(1-|u|^2)^2.
        \end{equation}
    \end{enumerate}
\end{lemmaA}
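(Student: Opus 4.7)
This decoupling is a weighted analog of the classical Lassoued--Mironescu identity, and I would follow the same strategy: expand the gradient of the product via Leibniz, integrate by parts the resulting cross term, and use the Euler--Lagrange equation \eqref{pde rho} to convert the leftover second-order expression in $\pmin$ into an algebraic one involving only $\pmin^2$, $a_\ep$ and $|u|^2$.

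Concretely, for part (1), since $\pmin$ is real-valued one has
\[
|\nabla(\pmin u)|^2 = |u|^2 |\nabla \pmin|^2 + \pmin^2 |\nabla u|^2 + \pmin \nabla \pmin \cdot \nabla |u|^2,
\]
with the middle term already producing the desired $\tfrac{1}{2}\int_\Omega \pmin^2|\nabla u|^2$ contribution. I would rewrite the cross term as $\tfrac14\int_\Omega \nabla(\pmin^2)\cdot \nabla |u|^2$ and integrate by parts: the Neumann condition $\partial_\nu \pmin = 0$ kills the boundary term, while the identity $\Delta(\pmin^2) = 2|\nabla\pmin|^2 - 2\pmin^2(a_\ep-\pmin^2)/\ep^2$ (obtained from \eqref{pde rho}) turns the cross term into a purely algebraic quantity. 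On the potential side I would use
\[
(a_\ep - \pmin^2|u|^2)^2 = (a_\ep-\pmin^2)^2 + 2(a_\ep-\pmin^2)\pmin^2(1-|u|^2) + \pmin^4(1-|u|^2)^2,
\]
so that $(a_\ep-\pmin^2)^2/(2\ep^2)$ combines with $\tfrac12 |\nabla\pmin|^2$ to reconstitute $E_\ep(\pmin)$, the last summand yields the claimed quartic remainder $\pmin^4(1-|u|^2)^2/(2\ep^2)$, and the linear mixed term is what must absorb the residuals coming from the cross term and from the stray $|u|^2|\nabla\pmin|^2$ factor. Verifying that these residuals indeed cancel is the only delicate step and is essentially a careful bookkeeping exercise.

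For part (2), I would note that $\pmin$ being real gives $\nabla_A(\pmin u) = u\nabla \pmin + \pmin \nabla_A u$ and hence
\[
|\nabla_A(\pmin u)|^2 = |u|^2|\nabla\pmin|^2 + \pmin^2|\nabla_A u|^2 + \pmin \nabla \pmin \cdot \nabla |u|^2,
\]
using $\bar u\, \nabla_A u + u\, \overline{\nabla_A u} = \nabla |u|^2$ (the real vector field $A$ cancels from this symmetric sum). Since the cross and $|u|^2|\nabla\pmin|^2$ terms agree with part (1) and all remaining manipulations involve only $\pmin$, $a_\ep$ and $|u|^2$, the argument for part (1) carries over verbatim with $|\nabla u|^2$ replaced by $|\nabla_A u|^2$, yielding \eqref{lm mag decoupling}.
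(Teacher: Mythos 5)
Your proposal follows the standard Lassoued--Mironescu strategy, which is precisely what the paper itself invokes --- the paper does not supply its own proof but cites Lassoued--Mironescu and \cite{diaz-roman-2d}*{Lemma 2.1}. So your approach (Leibniz expansion, integration by parts of the cross term using the Neumann boundary condition, substitution of the Euler--Lagrange equation \eqref{pde rho}, and binomial expansion of the potential) is the intended one.

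One remark on your bookkeeping, which, as you note, is the crux. The cancellation structure is slightly different from what you describe. The cross term, after integration by parts against $\nabla |u|^2$, produces $-\tfrac12\int |u|^2|\nabla\pmin|^2 + \tfrac1{2\ep^2}\int |u|^2\pmin^2(a_\ep-\pmin^2)$ (already including the overall $\tfrac12$ of $E_\ep$), and the first of these cancels the stray $\tfrac12\int|u|^2|\nabla\pmin|^2$ term \emph{already within the gradient part} --- the linear mixed term of the potential does not absorb it. What then remains is the sum of $\tfrac1{2\ep^2}\int|u|^2\pmin^2(a_\ep-\pmin^2)$ with the potential's mixed term $\tfrac1{2\ep^2}\int(a_\ep-\pmin^2)\pmin^2(1-|u|^2)$, which collapses to $\tfrac1{2\ep^2}\int\pmin^2(a_\ep-\pmin^2)$. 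To identify this with the missing $\tfrac12\int|\nabla\pmin|^2$ needed to reconstitute $E_\ep(\pmin)$, you then need to use the identity $\int_\Omega|\nabla\pmin|^2 = \tfrac1{\ep^2}\int_\Omega\pmin^2(a_\ep-\pmin^2)$ obtained by testing \eqref{pde rho} against $\pmin$. A cleaner route that avoids invoking this extra identity is to integrate the cross term against $\nabla(|u|^2-1)$ instead of $\nabla|u|^2$; then the stray factor combines to $\tfrac12\int|\nabla\pmin|^2$ directly and the remaining residual cancels exactly against the linear mixed term of the potential. Your treatment of part (2) --- observing that $\bar u\,\nabla_A u + u\,\overline{\nabla_A u} = \nabla|u|^2$ so the vector potential drops out of the cross term --- is correct and makes the extension from part (1) immediate.
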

The result for the energy without magnetic field given in \eqref{lm nomag decoupling}, was first introduced by Lassoued and Mironescu \cite{lassoued-mironescu} in two dimensions, under a specific pinning function and a prescribed Dirichlet boundary condition. Basically, the same proof applies to any dimension and for any pinning function $a_\ep$, even when modifying the boundary condition for $\pmin$. We refer to \cite{diaz-roman-2d}*{Lemma 2.1} for a proof of \eqref{lm mag decoupling} in the 2D setting, which directly extrapolates to the 3D one.

\medskip
A consequence of \eqref{lm nomag decoupling} is the following result.
\begin{lemmaA}
There exists a unique positive minimizer $\pmin$ for $E_\ep$. Moreover, any other minimizer for $E_\ep$ is of the form $\pmin e^{i\theta}$, where $\theta$ is a real constant.
\end{lemmaA}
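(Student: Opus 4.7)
The plan is to use the decoupling identity \eqref{lm nomag decoupling} twice, which trivializes both the uniqueness claim and the characterization of general minimizers. The existence of a positive minimizer was already noted (coercivity plus weak lower semicontinuity of $E_\ep$, combined with $E_\ep(|u|)\leq E_\ep(u)$).

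For the uniqueness of the positive minimizer, suppose $\rho_1,\rho_2$ are two positive minimizers. Each satisfies the Euler--Lagrange equation \eqref{pde rho}, and the maximum principle argument leading to \eqref{rho uniform bound} applies to both, so $\rho_1,\rho_2\geq\sqrt{b}>0$ in $\Omega$. Hence $u\colonequals \rho_2/\rho_1$ is a well-defined positive function in $H^1(\Omega)$, and $\rho_2=\rho_1\, u$. Applying \eqref{lm nomag decoupling} with $\pmin=\rho_1$ yields
\begin{equation*}
E_\ep(\rho_2)=E_\ep(\rho_1)+\frac{1}{2}\int_\Omega \rho_1^2|\nabla u|^2+\frac{\rho_1^4}{2\ep^2}(1-u^2)^2.
\end{equation*}
Since $E_\ep(\rho_1)=E_\ep(\rho_2)$ by minimality, both nonnegative terms on the right must vanish. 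Using $\rho_1\geq\sqrt{b}>0$, we conclude $\nabla u=0$ a.e.\ and $u^2=1$ a.e., so $u\equiv 1$ and $\rho_2=\rho_1$.

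For the second claim, let $v\in H^1(\Omega,\C)$ be an arbitrary minimizer. The pointwise inequality $|\nabla|v||\leq |\nabla v|$, together with the fact that the potential term depends only on $|v|$, gives $E_\ep(|v|)\leq E_\ep(v)$, so $|v|$ is a positive minimizer. By the uniqueness just proved, $|v|=\pmin$ a.e. Since $\pmin\geq\sqrt{b}>0$, we may write $v=\pmin w$ with $w\colonequals v/\pmin\in H^1(\Omega,\C)$ satisfying $|w|=1$ a.e. Applying \eqref{lm nomag decoupling} once more gives
\begin{equation*}
E_\ep(v)=E_\ep(\pmin)+\frac{1}{2}\int_\Omega \pmin^2|\nabla w|^2+\frac{\pmin^4}{2\ep^2}(1-|w|^2)^2=E_\ep(\pmin)+\frac{1}{2}\int_\Omega \pmin^2|\nabla w|^2,
\end{equation*}
where the potential term vanishes since $|w|=1$. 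Minimality forces $\int_\Omega \pmin^2|\nabla w|^2=0$, and again $\pmin\geq\sqrt{b}>0$ implies $\nabla w=0$ a.e. Hence $w$ is a.e.\ equal to a constant of modulus one, i.e.\ $w=e^{i\theta}$ for some $\theta\in\R$, proving $v=\pmin e^{i\theta}$.

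There is really no serious obstacle here: the only mild point requiring care is ensuring that the quotient $u=\rho_2/\rho_1$ (respectively $w=v/\pmin$) lies in $H^1(\Omega)$ so that \eqref{lm nomag decoupling} may be applied, which is guaranteed by the uniform positive lower bound \eqref{rho uniform bound}. Everything else follows by reading off the cases of equality in the decoupling identity.
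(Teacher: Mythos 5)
Your argument is correct in substance and follows the route the paper itself alludes to: the paper presents this lemma as ``a consequence of \eqref{lm nomag decoupling}'' and then defers to \cite{lassoued-mironescu} and \cite{DosRodSan}*{Corollary 2.1}, so there is no in-text proof to compare against. The uniqueness argument via $u=\rho_2/\rho_1$ is exactly the standard Lassoued--Mironescu reasoning and is complete once one notes, as you do, that both $\rho_i\ge\sqrt b$ so that the quotient lies in $H^1(\Omega)\cap L^\infty(\Omega)$.

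One small point in the second part deserves care: you write that $E_\ep(|v|)\le E_\ep(v)$ implies ``$|v|$ is a \emph{positive} minimizer,'' but a priori $|v|$ is only non-negative; positivity requires an extra step (e.g.\ the strong maximum principle applied to the Euler--Lagrange equation satisfied by $|v|$, together with ruling out $|v|\equiv 0$) before the uniqueness clause can be invoked. That detour is in fact superfluous: since $\pmin\ge\sqrt b>0$ and a standard truncation argument gives $|v|\le 1$ for any minimizer, one may directly set $w\colonequals v/\pmin\in H^1(\Omega,\C)$ and apply \eqref{lm nomag decoupling}. Then $E_\ep(v)=E_\ep(\pmin)$ forces both non-negative remainder terms to vanish simultaneously, yielding $|w|=1$ a.e.\ and $\nabla w=0$ a.e.\ in one stroke, without first identifying $|v|$ with $\pmin$. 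With that streamlining the proof is complete and matches the intended route.
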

The uniqueness of the positive minimizer was also proven in \cite{lassoued-mironescu}, but we refer to \cite{DosRodSan}*{Corollary 2.1} for a proof in our context.

\medskip
Thus, from now on, $\pmin$ denotes the unique positive minimizer of $E_\ep(\cdot)$. This function can be interpreted as a regularized version of $\sqrt{a_\ep}$, which it approximates well. In particular, we have the following result. 
\begin{propA}\label{prop:rho_ep like a_ep}
    There exist $C,c>0$ independent of $\ep$ such that for any $x\in \Omega, R>0$, if $a_\ep$ is constant in $\Omega \cap B(x,R)$, then 
    \begin{equation*}
        \sup_{\Omega \cap B(x,R)} |\sqrt{a_\ep} - \pmin| \leq C e^{-c\frac{R}{\ep}}.
    \end{equation*}
\end{propA}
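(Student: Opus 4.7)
\smallskip
\noindent\textbf{Proof plan.} The strategy is a barrier / maximum-principle argument based on the Euler--Lagrange equation \eqref{pde rho}. Let $a\in[b,1]$ denote the (constant) value of $a_\ep$ on $G\colonequals\Omega\cap B(x,R)$, and set $w_\ep\colonequals\pmin-\sqrt{a}$. Factoring $\pmin^2-a=w_\ep(\pmin+\sqrt{a})$ in \eqref{pde rho} shows that $w_\ep$ satisfies the linear elliptic equation
\begin{equation*}
\ep^2\Delta w_\ep \;=\; V_\ep\,w_\ep \quad\text{in } G,\qquad V_\ep\colonequals\pmin(\pmin+\sqrt{a}),
\end{equation*}
together with the Neumann condition $\partial_\nu w_\ep=0$ on $\partial\Omega\cap B(x,R)$, inherited from \eqref{pde rho}. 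The uniform bound \eqref{rho uniform bound} yields the two pointwise estimates $V_\ep\geq 2b>0$ on $G$ and $|w_\ep|\leq 1$ on $G$.

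\smallskip
Next, I would introduce the radial barrier centered at $x$, built from the fundamental radial solution of $\ep^2\Delta\Phi=2b\,\Phi$ in $\R^3$:
\begin{equation*}
\Phi_\ep(y)\;\colonequals\;\frac{R}{|y-x|}\cdot\frac{\sinh(\sqrt{2b}\,|y-x|/\ep)}{\sinh(\sqrt{2b}\,R/\ep)},
\end{equation*}
extended by continuity at $y=x$. By direct computation $\ep^2\Delta\Phi_\ep=2b\,\Phi_\ep\leq V_\ep\,\Phi_\ep$ in $B(x,R)$, $\Phi_\ep>0$, and $\Phi_\ep\equiv 1$ on $\partial B(x,R)$. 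Using $\sinh(\sqrt{2b}R/\ep)\geq\tfrac14 e^{\sqrt{2b}R/\ep}$ for $R/\ep$ large, one obtains the decay estimate
\begin{equation*}
\Phi_\ep(y)\;\leq\;C\Bigl(1+\frac{R}{\ep}\Bigr)\,e^{-\sqrt{2b}(R-|y-x|)/\ep}\quad\text{on } B(x,R).
\end{equation*}

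\smallskip
The last step is comparison. Apply the maximum principle to $z_\ep^\pm\colonequals\pm w_\ep-\Phi_\ep$, which satisfies $\ep^2\Delta z_\ep^\pm\geq V_\ep z_\ep^\pm$ with $V_\ep>0$: a positive interior maximum is ruled out, and on $\partial B(x,R)\cap\bar\Omega$ we have $z_\ep^\pm\leq 1-1=0$. The remaining part of $\partial G$, namely $\partial\Omega\cap B(x,R)$, is handled via a local $C^2$-flattening of $\partial\Omega$ followed by even reflection of $w_\ep$ across the flattened boundary: the Neumann condition $\partial_\nu w_\ep=0$ ensures that the reflected function is a weak solution of a PDE of the same form (with a slightly perturbed, still strictly positive, lower-order coefficient in the new coordinates) on a Euclidean set strictly containing $G$, so the argument reduces to the interior case. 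This yields $|w_\ep|\leq\Phi_\ep$ on $G$, and combining with the decay bound gives the stated estimate (to be read, strictly, on a slightly shrunk concentric ball --- which is harmless in all applications --- since the exponential bound degenerates to $1$ exactly at $\partial B(x,R)$).

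\smallskip
The main technical obstacle is precisely this treatment of the Neumann boundary: one must ensure that the local reflection procedure, and the associated change of variables, do not spoil the uniform-in-$\ep$ lower bound $V_\ep\geq 2b$ nor the positivity of the barrier $\Phi_\ep$. The $C^2$ regularity of $\partial\Omega$ and the Hopf-type use of the Neumann condition on $\pmin$ are exactly what make this go through.
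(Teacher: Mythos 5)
The paper does not prove this proposition itself; it cites \cite{DosSantos-Misiats}*{Appendix A}, so there is no internal argument to compare against line by line. Your barrier/maximum-principle reconstruction is a sound and standard route. The reduction to the linear problem $\ep^2\Delta w_\ep=V_\ep w_\ep$ with $V_\ep=\pmin(\pmin+\sqrt a)\geq 2b$ is correct (it uses only that $a_\ep$ is constant on $G$, together with \eqref{pde rho} and \eqref{rho uniform bound}), the radial barrier $\Phi_\ep$ is the right comparison function in $\R^3$, and the interior maximum-principle step is fine.

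The genuine gap is at $\partial\Omega\cap B(x,R)$. You acknowledge the difficulty but dispose of it too quickly. After flattening $\partial\Omega$ and reflecting evenly, the equation satisfied by the reflected function reads $\ep^2\tilde L\tilde w=\tilde V\tilde w$, where $\tilde L$ is a variable-coefficient second-order operator whose \emph{principal} part differs from $\Delta$ (your sketch only mentions a perturbed lower-order coefficient, which understates what the change of variables does). Since $|\nabla^2\Phi_\ep|\sim\ep^{-2}\Phi_\ep$, a deviation of size $\eta$ in the second-order coefficients produces an error of size $\eta\,\ep^{-2}\Phi_\ep$ when one checks that $\Phi_\ep$ is still a supersolution of $\ep^2\tilde L-\tilde V$; this competes with the good term $2b\,\ep^{-2}\Phi_\ep$ and is only beaten if $\eta$ is small compared with $b$. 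Achieving that forces a localization near each boundary point (so the $C^2$ flattening chart is close to an isometry) and hence a patching of localized barriers rather than the single global $\Phi_\ep$. That is precisely the step your sketch elides, and it is where the actual work in such a proof lies.

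Your remark about the shrunk ball is correct and worth retaining: $\Phi_\ep\equiv 1$ on $\partial B(x,R)$, so the comparison gives nothing there, and this is not a defect of the argument but of the literal statement. Near $\partial B(x,R)$ the minimizer $\pmin$ feels the values of $a_\ep$ just outside the ball over the length scale $\ep$, so the supremum over all of $\Omega\cap B(x,R)$ cannot in general be $O(e^{-cR/\ep})$; the estimate must be read on a concentric ball of radius, say, $R/2$. This is all the paper actually uses in Section~\ref{section eplevel}, where the proposition is applied pointwise at centers $x\in\Theta$ with $R$ of order $|\log\ep|^{-\kappa}$.
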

We refer to \cite{DosSantos-Misiats}*{Appendix A} for a proof.

\subsection{Notation and some classical results on vector-valued Sobolev spaces}
In this subsection, we state fundamental results on vector fields that will be used to prove the existence and regularity of the approximation of the Meissner configuration. We refer to \cites{kozono-yanagisawa-decomposition,ky-exterior} and the references therein for a more detailed analysis on the $L^p$-theory for vector fields on bounded and exterior domains, respectively. First, let us introduce some important spaces that will appear throughout this section:
\begin{itemize}
    \item The homogeneous Sobolev space $\dot{H}^{1}(\R^3,\R^3)$ is the completion of $C^{\infty}_0(\R^3,\R^3)$ with respect to the norm $\norm{\nabla (\cdot)}{L^2(\R^3, \R^3)}$. A classical result from Ladyzhenskaya \cite{ladyzhenskaya} shows that in this space the functional 
    $$\left(\norm{\diver (\cdot)}{L^2(\R^3,\R^3)}^2 + \norm{\curl (\cdot)}{L^2(\R^3,\R^3)}^2\right)^{\frac{1}{2}}$$ 
    defines a norm equivalent to the standard $H^{1}$-norm. The space $\hsolhom^{1}(\R^3,\R^3)$ denotes the space of solenoidal (divergence-free) vector fields in $\dot{H}^{1}(\R^3,\R^3)$. The inherited norm for this space is just $\norm{\curl (\cdot)}{L^2(\R^3,\R^3)}$. 

    \item The space $\lsol^2(\Omega,\R^3)$ is the space of vector fields in $L^2(\Omega,\R^3)$ with zero divergence in the sense of distributions, that is, $A \in \lsol^2(\Omega,\R^3)$ if and only if $A \cdot \nu = 0$ on $\partial \Omega$ and for any $\phi \in C^\infty_0(\Omega)$
    $$\int_\Omega A \cdot \nabla \phi = 0.$$

    \item The space $\hsolt^{1}(\Omega,\R^3)$ is the space of vector fields $A \in H^{1}(\Omega,\R^3)$ such that $\diver A = 0$ and $A \times \nu =0$ on $\partial \Omega$. 
\end{itemize}

The following are classical characterizations of $\lsol^2(\Omega,\R^3)$ in the case when $\Omega$ does not have holes.
\begin{lemmaA}[Properties of $\lsol^2(\Omega,\R^3)$] The following holds.
    \begin{enumerate}[leftmargin=*,label=\normalfont (\arabic*)] 
        \item The space of smooth compactly supported vector fields with zero divergence is dense in $\lsol^2(\Omega,\R^3)$, that is,
        \begin{equation}\label{prop lsol characterizations dense}
            \overline{\{\Phi \in C^\infty_{0}(\Omega,\R^3) \colon \diver \Phi = 0 \} }^{\norm{\cdot}{L^2(\Omega,\R^3)}} = \lsol^2(\Omega,\R^3).
        \end{equation}
        \item There exists a unique $B_A \in \hsolt^{1}(\Omega,\R^3)$ such that \begin{equation}\label{prop lsol characterizations curl}
            A = \curl B_A.
        \end{equation} Moreover, we have 
        \begin{equation*}
            \norm{B_A}{H^{1}(\Omega,\R^3)} \leq C \norm{A}{L^2(\Omega,\R^3)}
        \end{equation*}
        for some $C = C(\Omega) > 0$.
    \end{enumerate}
\end{lemmaA}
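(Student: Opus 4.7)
The plan is to establish (2) first, since the density result (1) follows cleanly from the existence of a vector potential combined with approximation of $\hsolt^1$-fields by smooth ones.

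\textbf{Step 1: Uniqueness in (2).} If $B_1, B_2 \in \hsolt^1(\Omega,\R^3)$ both satisfy $\curl B_i = A$, then $W = B_1-B_2$ lies in the space of harmonic Neumann fields for the tangential boundary condition, namely $\{W\in H^1(\Omega,\R^3) : \diver W = 0,\ \curl W = 0,\ W\times \nu = 0 \text{ on } \partial\Omega\}$. Because $\Omega$ is simply connected with connected boundary, this space is trivial (it is isomorphic to the first de Rham cohomology of $\Omega$ with appropriate relative boundary conditions), so $W=0$.

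\textbf{Step 2: Existence in (2).} I would argue by approximation followed by a variational construction. First, one mollifies $A$ inside $\Omega$ to produce a sequence $A_\delta \in C^\infty(\overline\Omega,\R^3) \cap \lsol^2(\Omega,\R^3)$ converging to $A$ in $L^2$; this uses $A\cdot \nu = 0$ on $\partial\Omega$ to extend $A$ by zero to a divergence-free vector field on $\R^3$ before mollifying and restricting. For smooth data $A_\delta$, extend by zero to $\tilde A_\delta \in L^2(\R^3,\R^3)$ with compact support and $\diver \tilde A_\delta = 0$ in the sense of distributions in $\R^3$, apply the Biot–Savart kernel to obtain $B^{(1)}_\delta \in H^1_{\loc}(\R^3,\R^3)$ with $\curl B^{(1)}_\delta = \tilde A_\delta$, $\diver B^{(1)}_\delta = 0$ in $\R^3$. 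Then correct the boundary condition: solve for $\phi \in H^1(\Omega)$ the Neumann problem $\Delta \phi = 0$ in $\Omega$, $\partial_\nu \phi = B^{(1)}_\delta \cdot \nu$ on $\partial\Omega$ (compatibility holds because $\diver B^{(1)}_\delta = 0$) and replace $B^{(1)}_\delta$ by $B^{(2)}_\delta = B^{(1)}_\delta - \nabla\phi$; this keeps $\curl B^{(2)}_\delta = A_\delta$ and $\diver B^{(2)}_\delta = 0$, and now $B^{(2)}_\delta \cdot \nu = 0$ on $\partial\Omega$. The remaining tangential defect is removed by adding a harmonic field solving the exterior tangential boundary value problem; by the triviality of the corresponding cohomology group (connected boundary, trivial topology), this field exists, yielding the desired $B_{A_\delta} \in \hsolt^1(\Omega,\R^3)$.

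\textbf{Step 3: The elliptic estimate.} Because $B_{A_\delta}$ satisfies $\curl B_{A_\delta} = A_\delta$, $\diver B_{A_\delta} = 0$ in $\Omega$, and $B_{A_\delta}\times \nu = 0$ on $\partial\Omega$, one invokes the div-curl regularity inequality for vector fields with vanishing tangential trace on a $C^2$ domain,
\begin{equation*}
\norm{B}{H^1(\Omega,\R^3)} \leq C\bigl(\norm{\curl B}{L^2(\Omega,\R^3)} + \norm{\diver B}{L^2(\Omega,\R^3)} + \norm{B}{L^2(\Omega,\R^3)}\bigr),
\end{equation*}
combined with a Poincaré-type bound $\norm{B}{L^2}\leq C\norm{\curl B}{L^2}$ coming from uniqueness (harmonic fields with the prescribed boundary condition being trivial). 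This gives $\norm{B_{A_\delta}}{H^1}\leq C\norm{A_\delta}{L^2}$ uniformly in $\delta$. Passing to the limit $\delta\to 0$ via weak compactness in $H^1$ and uniqueness from Step~1 produces $B_A\in \hsolt^1(\Omega,\R^3)$ with $\curl B_A = A$ and the required estimate.

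\textbf{Step 4: Density in (1).} The inclusion $\overline{\{\Phi\in C^\infty_0(\Omega,\R^3) : \diver \Phi = 0\}}\subseteq \lsol^2(\Omega,\R^3)$ is immediate. For the reverse, given $A\in \lsol^2(\Omega,\R^3)$ use (2) to write $A = \curl B_A$ with $B_A\in \hsolt^1(\Omega,\R^3)$. Approximate $B_A$ in $H^1$ by vector fields $B_n \in C^\infty_0(\Omega,\R^3)$: since the tangential trace of $B_A$ vanishes, a standard extension-mollification-cutoff argument (extend $B_A$ by the reflection suitable for the tangential component, mollify, and truncate against a boundary-adapted cutoff, adjusting for divergence with a Bogovskii-type operator) produces $B_n\in C^\infty_0(\Omega,\R^3)$ with $B_n\to B_A$ in $H^1$. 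Then $A_n := \curl B_n \in C^\infty_0(\Omega,\R^3)$ is divergence-free and $A_n\to A$ in $L^2$.

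The principal obstacle is Step~2, specifically producing the vector potential \emph{with} the tangential boundary condition; this is the step where the topological assumption on $\Omega$ is essential, and where the correction by a harmonic vector field must be justified via the vanishing of the appropriate cohomology group. Steps~1, 3, and 4 then follow by standard techniques, but the whole structure hinges on Step~2.
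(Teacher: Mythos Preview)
The paper does not prove this lemma; it is stated as a classical fact with a reference to the $L^p$ theory of Kozono--Yanagisawa. So there is no paper-proof to compare against, and I comment only on the soundness of your outline.

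Step~4 has a genuine gap. You claim that because $B_A\times\nu=0$ on $\partial\Omega$, the field $B_A$ can be approximated in $H^1(\Omega,\R^3)$ by elements of $C^\infty_0(\Omega,\R^3)$. This is false: $H^1$-approximation by $C^\infty_0$ requires the \emph{full} trace of $B_A$ to vanish, i.e.\ $B_A\in H^1_0$, whereas membership in $\hsolt^1$ only kills the tangential component and says nothing about $B_A\cdot\nu$, which is generically nonzero. The repair is easy once identified: the condition $B_A\times\nu=0$ characterizes exactly $H_0(\curl,\Omega)$, the closure of $C^\infty_0(\Omega,\R^3)$ in the $H(\curl)$ norm, so there do exist $B_n\in C^\infty_0$ with $\curl B_n\to \curl B_A=A$ in $L^2$, which is all you actually need. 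Alternatively, the standard route to (1) bypasses (2) entirely: the $L^2$-orthogonal complement of smooth compactly supported solenoidal fields is $\nabla H^1(\Omega)$ (a form of de Rham's lemma), and since $\lsol^2(\Omega,\R^3)=(\nabla H^1)^\perp$, density follows by taking the double complement.

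Step~2 also has rough edges. Mollifying the zero extension of $A$ and restricting to $\Omega$ yields a field that is smooth and divergence-free on $\R^3$, but its restriction need not satisfy $A_\delta\cdot\nu=0$ on $\partial\Omega$, contrary to what you assert; this in turn undermines the tangential correction, which relies on $(\curl B)\cdot\nu=A_\delta\cdot\nu=0$ to guarantee that the tangential trace of $B$ is a surface gradient. The intermediate correction of the \emph{normal} component of $B$ is also irrelevant to the target condition $B\times\nu=0$ and only adds confusion. The clean version of your argument skips the mollification (Biot--Savart applied directly to the zero extension of $A$ already gives $B_0\in H^1_{\loc}$ with $\curl B_0=A$ in $\Omega$) and corrects the tangential trace in a single step by adding $\nabla\psi$ with $\psi$ harmonic and Dirichlet data equal to a surface primitive of $-(B_0)_{\tan}$; that primitive exists because $A\cdot\nu=0$ and $\partial\Omega$ is a topological sphere under the standing hypotheses on $\Omega$.
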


A powerful tool in the analysis of solenoidal and potential fields is the \emph{Helmholtz--Hodge decomposition}, which states that for any bounded or exterior smooth domain $\Omega \subseteq \R^3$
\begin{equation}\label{hodge direct sum of spaces}
    L^2(\Omega,\R^3) = \lsol^2(\Omega,\R^3) \oplus \nabla H^{1}(\Omega).
\end{equation}
We will use the following forms of \eqref{hodge direct sum of spaces} given by the geometry of the domain.
\begin{propA}[Helmholtz--Hodge decomposition] The following holds.

    \begin{enumerate}[leftmargin=*,label=\normalfont (\arabic*)] 
    \item For every $A \in L^2(\Omega, \R^3)$, there exists a unique pair $(B_A, \phi_A)$, $B_A \in \hsolt^{1}(\Omega, \R^3)$ and $\phi_A \in H^{1}(\Omega)$ with $\int_\Omega \phi_A = 0$ such that
            \begin{equation}\label{hodge decomposition}
                     A = \curl B_A + \nabla \phi_A 
            \end{equation}
            This decomposition is continuous in $L^2(\Omega,\R^3)$, that is, there exists $C = C(\Omega) > 0$ such that for any $A \in H^1(\Omega,\R^3)$
            \begin{equation*}
                \norm{\nabla \phi_A}{L^2(\Omega)} + \norm{\curl B_A}{L^2(\Omega, \R^3)} \leq C \norm{A}{L^2(\Omega, \R^3)}.
            \end{equation*}

        \item For every $A \in L^2(\R^3, \R^3)$, there exists a unique pair $(B_A, \phi_A)$, $B_A \in \hsolhom^1(\R^3, \R^3)$ and $\phi_A \in H^1_\mathrm{loc}(\R^3)/\R$ with $\nabla \phi_A \in L^2(\R^3,\R^3)$ such that 
            \begin{equation}\label{hodge decomposition r3}
                     A = \curl B_A + \nabla \phi_A.
            \end{equation}
            This decomposition is continuous in $L^2(\R^3,\R^3)$, that is, there exists $C > 0$ such that for any $A \in H^{1}(\R^3,\R^3)$
            \begin{equation*}
                \norm{\nabla \phi_A}{L^2(\R^3,\R^3)} + \norm{\curl B_A}{L^2(\R^3, \R^3)} \leq C \norm{A}{L^2(\R^3, \R^3)}.
            \end{equation*}
    \end{enumerate}
    \begin{remark}\label{hodge lp}
        The decomposition result for bounded domains \eqref{hodge decomposition} also holds if we replace $L^2$ by $L^p$, for any $p \in (1,\infty)$.
    \end{remark}
\end{propA}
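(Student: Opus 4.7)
The plan is to construct each summand of the decomposition separately, exploiting the orthogonality of $\nabla H^1(\Omega)$ and $\lsol^2(\Omega,\R^3)$ in $L^2(\Omega,\R^3)$. This orthogonality is elementary: for $\psi\in H^1(\Omega)$ and $V\in\lsol^2$, the density in \eqref{prop lsol characterizations dense} reduces the claim $\int_\Omega V\cdot\nabla\psi = 0$ to smooth compactly supported $V$, where integration by parts gives zero.

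For part (1), I would define $\phi_A\in H^1(\Omega)$ with $\int_\Omega\phi_A=0$ as the unique solution, provided by Lax--Milgram on the closed subspace $\{\psi\in H^1(\Omega):\int_\Omega\psi=0\}$ equipped with the gradient norm (coercive by Poincaré--Wirtinger), of
\begin{equation*}
\int_\Omega \nabla\phi_A\cdot\nabla\psi = \int_\Omega A\cdot\nabla\psi \qquad \forall\,\psi\in H^1(\Omega).
\end{equation*}
This immediately yields $\|\nabla\phi_A\|_{L^2}\le\|A\|_{L^2}$. Setting $V:=A-\nabla\phi_A$, the variational identity says precisely that $\diver V=0$ distributionally and $V\cdot\nu=0$ on $\partial\Omega$ in the weak trace sense, so $V\in\lsol^2(\Omega,\R^3)$. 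Applying the preceding lemma (part (2)) produces a unique $B_A\in\hsolt^1(\Omega,\R^3)$ with $V=\curl B_A$ and $\|B_A\|_{H^1}\le C\|V\|_{L^2}$, establishing existence together with the continuity estimate via the $L^2$-Pythagoras identity $\|A\|_{L^2}^2=\|\nabla\phi_A\|_{L^2}^2+\|\curl B_A\|_{L^2}^2$. Uniqueness reduces to showing: if $\curl B+\nabla\phi=0$ with $B\in\hsolt^1$ and $\phi$ of zero mean, then both vanish. Using the identity $\diver(B\times\nabla\phi)=\nabla\phi\cdot\curl B$ together with the boundary condition $B\times\nu=0$, integration by parts gives $\int_\Omega\curl B\cdot\nabla\phi=0$, so the two terms are $L^2$-orthogonal and must each vanish; then $\phi\equiv 0$ by the zero-mean normalization and $B\equiv 0$ by the uniqueness clause in the preceding lemma.

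For part (2), the whole-space setting is cleanest on the Fourier side, which sidesteps boundary conditions. Given $A\in L^2(\R^3,\R^3)$, I would define
\begin{equation*}
\widehat{\nabla\phi_A}(\xi)=\frac{(\widehat A(\xi)\cdot\xi)\,\xi}{|\xi|^2},\qquad \widehat{B_A}(\xi)=\frac{i\,\xi\times\widehat A(\xi)}{|\xi|^2}.
\end{equation*}
These multipliers are orthogonal projections in $L^2$ summing to $\widehat A$ (elementary vector algebra), so $A=\nabla\phi_A+\curl B_A$ with $\diver B_A=0$ automatic. Plancherel and the pointwise bound $|\xi||\widehat B_A|\le |\widehat A|$ give $\|\nabla B_A\|_{L^2}+\|\nabla\phi_A\|_{L^2}\le C\|A\|_{L^2}$, and $\phi_A$ inherits its definition in $H^1_{\loc}(\R^3)/\R$ by inverting $i\xi$. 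Uniqueness again follows from orthogonality of the Helmholtz projectors. The mild subtlety is extracting $\phi_A\in H^1_{\loc}/\R$ rather than just $\nabla\phi_A\in L^2$; this comes from standard Sobolev embeddings once $\nabla\phi_A\in L^2(\R^3,\R^3)$ is established.

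The main (and only genuinely nontrivial) step is the $L^p$ extension in Remark \ref{hodge lp}. Here the variational construction is unavailable and one must appeal to Calderón--Zygmund theory: the map $A\mapsto\nabla\phi_A$ is (formally) $\nabla(-\Delta_N)^{-1}\diver$ with $\Delta_N$ the Neumann Laplacian on $\Omega$, whose kernel is a Calderón--Zygmund convolution-type operator bounded on $L^p(\Omega,\R^3)$ for $1<p<\infty$ on $C^2$ domains. Once this $L^p$ bound on $\nabla\phi_A$ is in hand, the $L^p$ div-curl regularity of the preceding lemma (in its $L^p$ version) delivers $B_A\in W^{1,p}$. I would cite \cite{kozono-yanagisawa-decomposition} directly for this extension, as reproducing the Calderón--Zygmund estimates is lengthy and outside the scope of the paper.
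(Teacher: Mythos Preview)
The paper does not supply its own proof of this proposition: it is presented as a classical result, with the reader referred to \cite{kozono-yanagisawa-decomposition} and \cite{ky-exterior} for the $L^p$-theory on bounded and exterior domains. Your argument is correct and follows the standard route---Lax--Milgram for the Neumann problem to extract $\phi_A$, the preceding lemma to recover $B_A$, and Fourier multipliers on $\R^3$---so there is nothing to compare against beyond noting that you have spelled out what the paper leaves to the literature. Your identification of the $L^p$ extension as the only step requiring Calder\'on--Zygmund machinery, and your decision to cite \cite{kozono-yanagisawa-decomposition} rather than reproduce it, matches exactly how the paper handles it.
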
 

\subsection{Construction of the approximating Meissner configuration}
The Lassoued--Mironescu decoupling \eqref{lm mag decoupling} gives us an intuition on how to identify vortexless configurations. Since $\frac{u}{\pmin}$ behaves like a homogeneous configuration, vortexless configurations should satisfy $\left|\frac{u}{\pmin}\right|^2 \approx 1$, that is, $|u| \approx \pmin$. Thus, a good starting point is to find $A$ in an adequate space such that it minimizes $GL_\ep(\pmin,A)$. However, for reasons that will be explained later (see Remark \ref{remark choice of phi}), we will introduce a function $\phi_A$ and minimize $GL_\ep(\pmin e^{i\phi_A},A) = GL_\ep(\pmin, A-\nabla \phi_A)$ instead. We can use the gauge-invariance property of $GL_\ep$ and the Helmholtz--Hodge decomposition \eqref{hodge decomposition r3} to assume that $A-A_{0,\ex} \in \hsolhom^1(\R^3,\R^3)$ a priori. This particular choice of gauge additionally guarantees coercivity with respect to $A$.

\medskip
For convenience's sake, we rescale to minimize $GL_\ep(\pmin, h_\ex(A-\nabla \phi_A))$. Observe that, from \eqref{lm mag decoupling}, we have
\begin{align*}
    GL_\ep(\pmin,h_\ex (A-\nabla \phi_A)) &= GL_\ep(\pmin \cdot 1,h_\ex (A-\nabla \phi_A)) \\
    &= E_\ep(\pmin) + \frac{h_\ex^2}{2}\int_\Omega \pmin^2|A-\nabla \phi_A|^2 +  \frac{h_\ex^2}{2}\int_{\R^3}|\curl A - H_{0,\ex}|^2.
\end{align*}
Therefore, minimizing $GL_\ep(\pmin,h_\ex(A-\nabla \phi_A))$ is equivalent to minimizing 
$$\frac{1}{2} \int_\Omega \pmin^2 |A-\nabla \phi_A|^2 + \frac{1}{2} \int_{\R^3} |\curl (A - A_{0,\ex})|^2.$$
Finally, we make the crucial choice (see Remark \ref{remark choice of phi}) $\phi_A=\phipmin{A}$, the unique solution in $H^1(\Omega)$ with zero-average of \eqref{phi rho equation}, which leads us to consider the functional $J_\ep$, defined for $A \in [A_{0,\ex} + \Dot{H}^1_\sigma(\R^3,\R^3)]$ by
$$J_\ep(A) \colonequals \frac{1}{2} \int_\Omega \pmin^2 |A-\nabla \phipmin{A}|^2 + \frac{1}{2} \int_{\R^3} |\curl (A - A_{0,\ex})|^2.$$

\medskip
A standard analysis of this functional yields the following result.
\begin{prop}\label{existence of minimizer}
     $J_\ep$ admists a unique minimizer $A^0_\ep \in  [A_{0,\ex} + \hsolhom^{1}(\R^3,\R^3)]$ which satisfies.
    \begin{enumerate}[leftmargin=*,label={\normalfont (\alph*)}]
        \item\label{item1} $J_\ep(A^0_\ep) \leq C \norm{H_{0,\ex}}{L^2(\Omega,\R^3)}^2$ for some $C=C(\Omega)>0$

        \item\label{item2} The Euler--Lagrange equation solved by the minimizer $A^0_\ep$ is given by
        \begin{equation}\label{b0 r3 euler lagrange}
            \curl (\curl A^0_\ep - H_{0,\ex}) + \chi_\Omega \curl \bpmin{A^0_\ep} = 0\quad \mathrm{in }\ \R^3,
        \end{equation}
        where $\bpmin{A^0_\ep} \in \hsolt^1(\Omega,\R^3)$ is the vector field related to $A^0_\ep$ by \eqref{b0 existence} and $\chi_\Omega$ denotes the indicator function of the set $\Omega$.
        
        \item\label{item3} The vector field $\bpmin{A^0_\ep}$ satisfies the following variational equation
        \begin{equation}\label{b0 integral equation}
            \left\{\begin{array}{rcll}
              \displaystyle \int_\Omega \left(\curl \dfrac{\curl \bpmin{A^0_\ep}}{\pmin^2} + \bpmin{A^0_\ep} \right) \cdot V  &=& \displaystyle \int_\Omega H_{0,\ex} \cdot V &\mathrm{in}\ \Omega\\
               \bpmin{A^0_\ep} \times \nu &=& 0 &\mathrm{on}\ \Omega,
            \end{array}\right.
        \end{equation}
        for all $V \in \lsol^2(\Omega,\R^3)$. 
    \end{enumerate}
\end{prop}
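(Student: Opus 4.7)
The plan is to use the direct method for existence and uniqueness, and then to derive items \ref{item2} and \ref{item3} by computing the first variation and post-processing. Parametrize $A = A_{0,\ex} + B$ with $B \in \hsolhom^{1}(\R^3,\R^3)$. The curl term in $J_\ep$ equals $\tfrac{1}{2}\|\curl B\|_{L^2(\R^3)}^2$, which by Ladyzhenskaya's characterization is equivalent to $\tfrac{1}{2}\|B\|_{\hsolhom^{1}}^2$, yielding coercivity and strict convexity in $B$. The map $A \mapsto \phi_{A,\pmin^2}$ is linear (by linearity of \eqref{phi rho equation}), so the first term of $J_\ep$ is the composition of the bounded linear map $A \mapsto A - \nabla \phi_{A,\pmin^2}$ from $L^2(\Omega)$ to itself with a weighted $L^2$-seminorm squared, hence convex and continuous. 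Therefore $J_\ep$ is strictly convex, coercive, and weakly lower semicontinuous on the affine space $A_{0,\ex} + \hsolhom^{1}(\R^3,\R^3)$, and the direct method yields a unique minimizer $A^0_\ep$. For item \ref{item1}, I would evaluate $J_\ep(A_{0,\ex})$: the curl term vanishes, and since $\phi_{A_{0,\ex},\pmin^2}$ minimizes $\psi \mapsto \int_\Omega \pmin^2|A_{0,\ex} - \nabla \psi|^2$, testing with $\psi \equiv 0$ together with $\pmin^2 \leq 1$ gives $J_\ep(A^0_\ep) \leq \tfrac{1}{2}\|A_{0,\ex}\|_{L^2(\Omega,\R^3)}^2$, and the Coulomb gauge estimate \eqref{aex cgauge estimate} closes the bound.

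For item \ref{item2}, I compute the first variation of $J_\ep$ at $A^0_\ep$ in a direction $V \in \hsolhom^{1}(\R^3,\R^3)$. Using the linearity of $A \mapsto \phi_{A,\pmin^2}$ and the identity $\pmin^2(A^0_\ep - \nabla \phi_\ep^0) = \curl B_\ep^0$ from \eqref{b0 existence}, I obtain
$$
\delta J_\ep(A^0_\ep)[V] = \int_\Omega \curl B_\ep^0 \cdot (V - \nabla \phi_{V,\pmin^2}) + \int_{\R^3} \curl(A^0_\ep - A_{0,\ex}) \cdot \curl V.
$$
The cross-term $\int_\Omega \curl B_\ep^0 \cdot \nabla \phi_{V,\pmin^2}$ vanishes after integration by parts using $\curl \nabla = 0$ together with $B_\ep^0 \times \nu = 0$ on $\partial \Omega$. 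Integrating the curl term on $\R^3$ by parts (boundary contributions at infinity being legitimately discarded thanks to $\hsolhom^{1}$-decay) produces $\int_{\R^3} \curl(\curl A^0_\ep - H_{0,\ex}) \cdot V$. Setting $\delta J_\ep(A^0_\ep)[V] = 0$ for every admissible $V$ and observing that both $\chi_\Omega \curl B_\ep^0$ and $\curl(\curl A^0_\ep - H_{0,\ex})$ are automatically divergence-free, a standard Helmholtz-type argument upgrades the identity from holding against solenoidal test fields to holding distributionally in $\R^3$, giving item \ref{item2}.

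For item \ref{item3}, I restrict item \ref{item2} to $\Omega$ and use $\curl A^0_\ep = \curl(\curl B_\ep^0 / \pmin^2)$, which follows from \eqref{b0 existence} and $\curl \nabla \phi_\ep^0 = 0$, to obtain
$$
\curl\left(\curl \frac{\curl B_\ep^0}{\pmin^2} + B_\ep^0 - H_{0,\ex}\right) = 0 \quad \text{in } \Omega.
$$
Since $\Omega$ is simply connected, the parenthesized vector field is a gradient $\nabla \psi$, and hence integrates to zero against any $V \in \lsol^2(\Omega,\R^3)$ by $\diver V = 0$ in $\Omega$ together with $V \cdot \nu = 0$ on $\partial \Omega$, giving \eqref{b0 integral equation}. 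I expect the main technical care to lie in item \ref{item2}: specifically, one must correctly combine the weighted Hodge-type identity $\pmin^2(A^0_\ep - \nabla\phi_\ep^0) = \curl B_\ep^0$ with the boundary condition $B_\ep^0 \times \nu = 0$ to eliminate the $\nabla \phi_{V,\pmin^2}$ contribution from the variation and thereby recover a clean equation; item \ref{item3} then follows swiftly from the simple connectedness of $\Omega$.
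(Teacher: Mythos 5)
Your proof is correct and follows essentially the same approach as the paper: direct method for existence and uniqueness using linearity and boundedness of $A \mapsto \nabla\phipmin{A}$, first variation plus the orthogonality relation $\int_\Omega \curl B^0_\ep \cdot \nabla\phipmin{V} = 0$ (from $B^0_\ep \times \nu = 0$) to derive item~\ref{item2}, and localization to $\Omega$ for item~\ref{item3}. Your use of the variational characterization of $\phipmin{A_{0,\ex}}$ (testing with $\psi \equiv 0$) to get item~\ref{item1}, and your direct appeal to simple connectedness in item~\ref{item3} rather than the paper's curl-representation of compactly supported solenoidal fields, are minor stylistic streamlinings of the same argument.
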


Before proving this proposition, let us introduce some notation regarding the minimizer $A^0_\ep$ that will be used throughout the rest of the paper. We denote $H^0_\ep\colonequals \curl A^0_\ep$. Also, from \eqref{phi rho equation} and \eqref{b0 existence} for $A=A^0_\ep$, we get a function $\phipmin{A^0_\ep}$ and a vector field $\bpmin{A^0_\ep}$, which we will denote respectively by $\phi_\ep^0$ and $B^0_\ep$.

\begin{proof}
    We split the proof into two steps.
    \begin{enumerate}[label=\textsc{\bf Step \arabic*.},leftmargin=0pt,labelsep=*,itemindent=*,itemsep=10pt,topsep=10pt]
    \item {\bf Existence and uniqueness. Proof of item \ref{item1}.} Since the standard $H^1$-norm in $\hsolhom^1(\R^3,\R^3)$ is equivalent to $\norm{\curl A}{L^2(\R^3,\R^3)}$, the coercivity of $J_\ep$ in $[A_{0,ex} + \hsolhom^{1}(\R^3,\R^3)]$ follows directly. On the other hand, by the linearity of equation \eqref{phi rho equation}, we deduce that the map $A \to \nabla \phipmin{A}$ is linear. Furthermore, by using $\phipmin{A}$ as a test function in $\eqref{phi rho equation}$, we have
    $$ \int_\Omega \pmin^2 |\nabla \phipmin{A}|^2 = \int_\Omega \pmin^2 A \cdot \nabla \phipmin{A} . $$
    By the Cauchy-Schwarz inequality and \eqref{rho uniform bound}, we conclude that the map $A \to \nabla \phipmin{A}$ is a bounded linear operator from $L^2(\Omega,\R^3)$ to $L^2(\Omega,\R^3)$. Therefore, $J_\ep$ is both (strongly) continuous and strictly convex, which implies that it is also weakly lower semicontinuous. It thus follows from the direct method of the calculus of variations that $J_\ep$ admits a unique minimizer in $[A_{0,\ex} + \hsolhom^{1}(\R^3,\R^3)]$ denoted by $A^0_\ep$. Finally, we see that item \ref{item1} is satisfied by noticing that $A_{0,\ex}$ is admissible for $J_\ep$, since, by recalling the boundedness of the operator $A \to \nabla \phipmin{A}$, we have
    \begin{align*}
        J_\ep(A^0_\ep) \leq J_\ep(A_{0,\ex}) &= \frac{1}{2}\int_\Omega \pmin^2|A_{0,\ex} - \nabla \phipmin{{A_{0,\ex}}}|^2 \\
        &\stackrel{\eqref{rho uniform bound}}{\leq} C \norm{A_{0,\ex}}{L^2(\Omega,\R^3)}^2\\
        &\stackrel{\eqref{aex cgauge estimate}}{\leq} C \norm{H_{0,\ex}}{L^2(\Omega,\R^3)}^2.
    \end{align*}
    
    \item {\bf Euler--Lagrange equation for $A^0_\ep$. Proof of items \ref{item2} and \ref{item3}.} From minimality of $A^0_\ep$, we  deduce that it satisfies, for all $A \in \hsolhom^{1}(\R^3,\R^3)$,
    \begin{equation}\label{weak euler lagrange a0}
        \int_\Omega \pmin^2(A^0_\ep - \nabla \phi_\ep^0) \cdot (A - \nabla \phipmin{A}) + \int_{\R^3} (H^0_\ep - H_{0,\ex}) \cdot \curl A = 0.
    \end{equation}
    Since $\pmin^2(A^0_\ep - \nabla \phi_\ep^0) \in \lsol^2(\Omega,\R^3)$, it follows from \eqref{prop lsol characterizations curl} that we can write $\pmin^2(A^0_\ep - \nabla \phi_\ep^0)$ as $\curl B^0_\ep$, where $B^0_\ep \in \hsolt^{1}(\Omega,\R^3)$. It is worth noting that $B^0_\ep$ satisfies
    \begin{equation}\label{b0 properties}
        \left\{\begin{array}{rcll}
         \diver B^0_\ep &=& 0 &\mathrm{in}\ \Omega  \\
          \curl B^0_\ep &=& \pmin^2(A^0_\ep - \nabla \phi_\ep^0) &\mathrm{in}\ \Omega \\
          B^0_\ep \times \nu &=& 0 &\mathrm{on}\ \partial \Omega\\
          \curl B^0_\ep \cdot \nu &=& 0 &\mathrm{on}\ \partial \Omega.
    \end{array}
    \right.
    \end{equation}
    Additionally, integrating by parts we deduce the orthogonality relation, for any $\phi \in H^{1}(\Omega)$,
    \begin{equation}\label{b0 orthogonality}
        \int_\Omega \pmin^2(A^0_\ep - \nabla \phi_\ep^0) \cdot \nabla \phi \stackrel{\eqref{b0 properties}}{=} \int_\Omega \curl B^0_\ep \cdot \nabla \phi = \int_\Omega B^0_\ep \cdot \curl \nabla \phi - \int_{\partial \Omega} \nabla \phi \cdot (B^0_\ep \times \nu) \stackrel{\eqref{b0 properties}}{=} 0.
    \end{equation}
    Using \eqref{b0 orthogonality} with $\phi = \phipmin{A}$ and inserting it into equation \eqref{weak euler lagrange a0}, we obtain
    \begin{equation}\label{weak euler lagrange b0}
        \int_\Omega \curl B^0_\ep \cdot A + \int_{\R^3} (H^0_\ep - H_{0,\ex}) \cdot \curl A = 0.
    \end{equation}
    By the Helmholtz--Hodge decomposition \eqref{hodge decomposition r3}, we conclude that $\eqref{weak euler lagrange b0}$ holds for any $A \in \Dot{H}^1(\R^3,\R^3)$. In particular, for any $\Phi \in C^\infty_0(\R^3,\R^3)$,
    $$\int_{\R^3} (\chi_\Omega \curl B^0_\ep + \curl (H^0_\ep - H_{0,\ex})) \cdot \Phi = 0. $$
    This yields that the Euler--Lagrange equation for $J_\ep$ is \eqref{b0 r3 euler lagrange}, concluding the proof of item \ref{item2}.

    Item \ref{item3} follows from using vector fields supported in $\Omega$ as test vector fields in \eqref{weak euler lagrange b0}. If $\Phi \in C^\infty_0(\Omega,\R^3)$, it follows from equation \eqref{weak euler lagrange b0} and integration by parts that
    \begin{equation*}
        \int_\Omega (B^0_\ep + H^0_\ep - H_{0,\ex}) \cdot \curl \Phi = 0.
    \end{equation*}
    Now let $\Psi \in C^\infty_0(\Omega,\R^3)$ such that $\diver \Psi = 0$ in $\Omega$. Since, by \eqref{prop lsol characterizations curl}, we can write $\Psi$ as $\curl \Phi$ in $\Omega$ with $\Phi \times \nu = 0$ on $\partial \Omega$, we have
    \begin{equation*}
        \int_\Omega (B^0_\ep + H^0_\ep - H_{0,\ex}) \cdot \Psi = 0.
    \end{equation*}
    Using the density property \eqref{prop lsol characterizations dense} of $\lsol^2(\Omega,\R^3)$ , we have that for any $A \in \lsol^2(\Omega,\R^3)$,
    \begin{equation*}
        \int_\Omega (B^0_\ep + H^0_\ep - H_{0,\ex}) \cdot A \stackrel{\eqref{b0 properties}}{=} \int_\Omega \left(B^0_\ep + \curl \dfrac{\curl B^0_\ep}{\pmin^2} - H_{0,\ex}\right) \cdot A = 0.
    \end{equation*}
    This concludes the proof of item \ref{item3}.
    \end{enumerate}
    \end{proof}
    
    \subsection{Regularity of the Meissner configuration} Although we can deduce differenciability for the minimizer using standard regularity results, we are specifically looking for uniform bounds that do not depend on $\ep$. For example, we can deduce from item \ref{item1} of Proposition \ref{existence of minimizer}, using \eqref{rho uniform bound}, that 
    \begin{equation}\label{initialboundenergy}
    \norm{\curl B^0_\ep}{L^2(\Omega,\R^3)} + \norm{A^0_\ep - A_{0,\ex}}{H^1(\R^3,\R^3)} \leq C \norm{A_{0,\ex}}{L^2(\Omega,\R^3)}\stackrel{\eqref{aex cgauge estimate}}{\leq} C \norm{H_{0,\ex}}{L^2(\Omega,\R^3)},
    \end{equation}
    where $C = C(\Omega,b) > 0$. 
    Moreover, based on the weak Euler--Lagrange equation \eqref{weak euler lagrange b0}, we obtain the following result.

    \begin{prop}\label{prop:regularityB0}
        Let $A^0_\ep$ be the minimizer of $J_\ep$ given by Proposition \ref{existence of minimizer}, and $B^0_\ep$ be given by \eqref{b0 existence}. For any $q \in [1,\infty)$, $B^0_\ep \in W^{1,q}(\Omega,\R^3)$ with
        \begin{equation}\label{b0 c01 regularity}
            \norm{B^0_\ep}{W^{1,q}(\Omega,\R^3)} \leq C \norm{H_{0,\ex}}{L^3(\Omega,\R^3)},
        \end{equation}
        where $C=C(\Omega,b,q)>0$.
        Furthermore, $B^0_\ep \in C^{0,\gamma}_T(\Omega,\R^3)$ for all $\gamma \in (0,1)$, with
        \begin{equation}\label{b0 holderreg}
        \norm{B^0_\ep}{C^{0,\gamma}(\Omega,\R^3)} \leq C\norm{H_{0,\ex}}{L^3(\Omega,\R^3)},
        \end{equation}
        where $C = C(\Omega,b,\gamma) > 0$.
    \end{prop}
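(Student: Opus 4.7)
My plan is to combine the classical $L^p$-theory for the div--curl system with a Calderón--Zygmund bootstrap based on the Newtonian-potential representation of $A^0_\ep - A_{0,\ex}$ derived from the Euler--Lagrange equation~\eqref{b0 r3 euler lagrange}. The starting point is the standard inequality (see e.g.\ \cite{kozono-yanagisawa-decomposition}): for a simply connected smooth bounded $\Omega \subset \R^3$ and any $p \in (1, \infty)$, any $V \in W^{1,p}(\Omega, \R^3)$ with $\diver V = 0$ in $\Omega$ and $V \times \nu = 0$ on $\partial \Omega$ satisfies $\|V\|_{W^{1,p}(\Omega)} \leq C_p \|\curl V\|_{L^p(\Omega)}$. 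Thanks to \eqref{b0 properties}, this reduces the task to bounding $\|\curl B^0_\ep\|_{L^q(\Omega)}$ by $\|H_{0,\ex}\|_{L^3(\Omega)}$ for every $q \in [1, \infty)$. The base case $q = 2$ is immediate from Proposition~\ref{existence of minimizer}\ref{item1} combined with $\pmin^2 \leq 1$.

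For the bootstrap I would exploit that $\diver(A^0_\ep - A_{0,\ex}) = 0$ in $\R^3$ (since $A^0_\ep - A_{0,\ex} \in \hsolhom^1(\R^3,\R^3)$ and $A_{0,\ex}$ is in the Coulomb gauge). Combined with \eqref{b0 r3 euler lagrange} and the identity $\curl \curl = -\Delta + \nabla \diver$, one obtains the Poisson equation
\begin{equation*}
-\Delta(A^0_\ep - A_{0,\ex}) = -\chi_\Omega \curl B^0_\ep \quad \text{in } \R^3.
\end{equation*}
Classical Calderón--Zygmund theory then yields $\|A^0_\ep - A_{0,\ex}\|_{W^{2,p}(\R^3)} \leq C_p \|\curl B^0_\ep\|_{L^p(\Omega)}$ for $p \in (1, \infty)$. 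Starting from the base case, Morrey's embedding $W^{2,2}(\R^3) \hookrightarrow C^{0,1/2}(\R^3)$ gives $A^0_\ep - A_{0,\ex} \in L^\infty(\Omega)$; combined with \eqref{aex cgauge estimate} and $W^{1,3}(\Omega) \hookrightarrow L^q(\Omega)$ for every $q<\infty$, one obtains $A^0_\ep \in L^q(\Omega)$ uniformly in $\ep$, with a bound in terms of $\|H_{0,\ex}\|_{L^3}$.

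To transfer this integrability to $\curl B^0_\ep = \pmin^2(A^0_\ep - \nabla \phi_\ep^0)$, I would exploit a Hodge identity coming from Proposition~\ref{existence of minimizer}\ref{item3}: since $\curl(\curl B^0_\ep / \pmin^2) = \curl A^0_\ep = H^0_\ep$, the weak equation is equivalent to $H^0_\ep + B^0_\ep - H_{0,\ex}$ lying in $\lsol^2(\Omega,\R^3)^\perp = \nabla H^1(\Omega)$; taking divergence and using $\diver B^0_\ep = \diver H^0_\ep = \diver H_{0,\ex} = 0$ shows that the gradient is of a harmonic function. Hence
\begin{equation*}
B^0_\ep + H^0_\ep = H_{0,\ex} + \nabla p \quad \text{in } \Omega,
\end{equation*}
with $\Delta p = 0$. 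The $L^p$-version of the Helmholtz--Hodge decomposition (Remark~\ref{hodge lp}) then yields $\|B^0_\ep\|_{L^q(\Omega)} \leq C_q \|H^0_\ep - H_{0,\ex}\|_{L^q(\Omega)}$ for every $q \in (1, \infty)$. Iterating this with the Calderón--Zygmund estimate propagates integrability to all finite $q$; the claim \eqref{b0 c01 regularity} then follows from the div--curl inequality, and \eqref{b0 holderreg} is immediate from Morrey's embedding $W^{1,q}(\Omega) \hookrightarrow C^{0, 1 - 3/q}(\Omega)$ for $q > 3$, with $\gamma \to 1^-$ only in the limit $q \to \infty$ (explaining why $\gamma = 1$ is not reached).

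\emph{Main obstacle.} Closing the bootstrap uniformly in $\ep$ is the principal difficulty, since the weight $\pmin^2$ is only $L^\infty$ (with $b \leq \pmin^2 \leq 1$): a direct Meyers-type analysis of the weighted Neumann problem solved by $\phi_\ep^0$ would only yield $\nabla \phi_\ep^0 \in L^p$ in a narrow window around $p = 2$. The Hodge identity $B^0_\ep + H^0_\ep = H_{0,\ex} + \nabla p$ elegantly bypasses any direct analysis of $\phi_\ep^0$: the regularity of $B^0_\ep$ is extracted directly from that of $H^0_\ep - H_{0,\ex}$, with $\pmin^2$ entering only through its uniform upper bound, yielding a constant depending only on $b$, $\Omega$, and $q$.
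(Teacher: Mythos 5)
Your opening reduction is exactly the paper's: by the Friedrichs--Poincar\'e (div--curl) inequality for fields with $\diver=0$ and vanishing tangential trace, everything boils down to bounding $\norm{\curl B^0_\ep}{L^q(\Omega,\R^3)}$, and the interior elliptic estimate for the Euler--Lagrange equation \eqref{b0 r3 euler lagrange}, giving $\norm{A^0_\ep - A_{0,\ex}}{H^2(\Omega,\R^3)}\leq C\norm{\curl B^0_\ep}{L^2(\Omega,\R^3)}$, is also common to both arguments. The divergence occurs in how the integrability of $A^0_\ep$ is transferred to $\curl B^0_\ep=\pmin^2(A^0_\ep-\nabla\phi_\ep^0)$, and here your proposal has a genuine gap.

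Your ``Hodge identity'' $H^0_\ep + B^0_\ep - H_{0,\ex}=\nabla p$ with $\Delta p=0$ is correct (it is precisely the content of Proposition~\ref{existence of minimizer}\ref{item3}, after writing $\curl(\curl B^0_\ep/\pmin^2)=H^0_\ep$ and invoking $L^2$-orthogonality to $\lsol^2$). But the next step --- invoking the $L^q$-version of \eqref{hodge decomposition} to deduce $\norm{B^0_\ep}{L^q}\leq C\norm{H^0_\ep-H_{0,\ex}}{L^q}$ --- is not justified. The decomposition of Remark~\ref{hodge lp} splits a field into $\curl B_A + \nabla\phi_A$ where the solenoidal piece $\curl B_A$ has \emph{vanishing normal} component; $B^0_\ep$ instead lies in $\hsolt^1$, with \emph{vanishing tangential} component, so it is not the $\lsol^q$-component of $H_{0,\ex}-H^0_\ep$, and the continuity constant you want is not provided by the cited decomposition. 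Turning the relation $B^0_\ep = (H_{0,\ex}-H^0_\ep)+\nabla p$ into a quantitative $L^q$ bound requires an estimate for the harmonic function $p$ with tangential-gradient boundary data in terms of $L^q$ data in the bulk, which is not standard and is not supplied. Moreover, even granting $\norm{B^0_\ep}{L^q}\lesssim\norm{H^0_\ep-H_{0,\ex}}{L^q}$, the iteration does not close: the div--curl inequality you invoke at the end needs $\curl B^0_\ep\in L^q$, not $B^0_\ep\in L^q$. Taking $\curl$ of the Hodge identity gives $\curl B^0_\ep=\curl(H_{0,\ex}-H^0_\ep)$, which involves \emph{derivatives} of $H^0_\ep-H_{0,\ex}$, hence $A^0_\ep-A_{0,\ex}\in W^{2,q}$, which by your Calder\'on--Zygmund step already requires $\curl B^0_\ep\in L^q$ --- circular.

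The paper avoids both problems by bounding $\nabla\phi_\ep^0$ directly: it uses the duality $\norm{\nabla\phi_\ep^0}{L^q}=\sup_V \norm{V}{L^r}^{-1}\int_\Omega V\cdot\nabla\phi_\ep^0$, decomposes the \emph{test} field $V=\curl B_V+\nabla\varphi_V$ by the (correct, normal-trace) Helmholtz--Hodge decomposition, kills the $\curl B_V$ contribution by integration by parts, and then invokes the weak form of \eqref{phi rho equation} to replace $\nabla\phi_\ep^0$ by $A^0_\ep$ inside the pairing, landing on $\norm{\nabla\phi_\ep^0}{L^q}\leq C\norm{A^0_\ep}{L^q}$ with $C$ depending only on $\Omega$, $b$, $q$. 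This is precisely the step your ``main obstacle'' paragraph seeks to bypass, and the bypass does not go through; the duality argument (rather than a Meyers window) is what the uniform-in-$\ep$ constant hinges on.
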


    \begin{proof}
        By using interior elliptic regularity on equation \eqref{b0 r3 euler lagrange}, we have, for an open ball $B \subset \R^3$ that compactly contains $\Omega$, say $B=B(x_0,\mathrm{diam}(\Omega)+1)$ for $x_0 \in \Omega$, that there exists $C=C(\Omega)>0$ such that
        $$\norm{A^0_\ep- A_{0,\ex}}{H^{2}(B,\R^3)} \leq C \norm{\curl B^0_\ep}{L^2(\Omega,\R^3)}.$$
        In particular, we have
        \begin{align}\label{a0 elliptic regularity}
            \norm{A^0_\ep- A_{0,\ex}}{H^{2}(\Omega,\R^3)} &\leq C \norm{\curl B^0_\ep}{L^2(\Omega,\R^3)},
        \end{align}  
        where $C=C(\Omega) >0$. On the other hand, for $q \in (1,\infty)$,
        \begin{align}\label{curl b0 lp bound}
            \norm{\curl B^0_\ep}{L^q(\Omega,\R^3)} \stackrel{\eqref{b0 properties}}{=} \norm{\pmin^2(A^0_\ep - \nabla \phi_\ep^0)}{L^q(\Omega,\R^3)}
            \stackrel{\eqref{rho uniform bound}}{\leq} C\left(\norm{A^0_\ep}{L^q(\Omega,\R^3)} + \norm{\nabla \phi_\ep^0}{L^q(\Omega,\R^3)}\right).
        \end{align}
        We aim to bound $\norm{\nabla \phi_\ep^0}{L^q(\Omega,\R^3)}$ in terms of $A^0_\ep$, by exploiting the elliptic structure of equation \eqref{phi rho equation}. Using the dual characterization of the $L^q$-norm, we have, for any $q \in (1,\infty)$, that
        \begin{equation*}
            \norm{\nabla \phi_\ep^0}{L^q(\Omega,\R^3)} = \sup_{V \in L^r(\Omega,\R^3)} \frac{1}{\norm{V}{L^r(\Omega,\R^3)}} \int_\Omega V \cdot \nabla \phi_\ep^0,
        \end{equation*}
        where $r$ is such that $\frac{1}{q} + \frac{1}{r} = 1$. 
        Using the Helmholtz--Hodge decomposition \eqref{hodge decomposition} (see Remark \ref{hodge lp}), we can write $V$ as $\curl B_V+ \nabla \varphi_V$, where $\curl B_V \in \lsol^q(\Omega,\R^3)$, that is, it satisfies for any $\zeta \in W^{1,r}(\Omega)$
        $$\int_\Omega \curl B_V \cdot \nabla \zeta = 0$$
        and
        $$\norm{\curl B_V}{L^q(\Omega,\R^3)} + \norm{\nabla \varphi_V}{L^q(\Omega,\R^3)} \leq C \norm{V}{L^q(\Omega,\R^3)}$$
        for $C=C(\Omega,q)>0$. We can therefore deduce that
        $$\int_\Omega \curl B_V \cdot \nabla \phi^0_\ep =0$$
        and the continuity of the decomposition yields a constant $C=C(\Omega,q)>0$, independent of $\ep$, such that
        \begin{equation}\label{eqqqqqq}
            \norm{\nabla \phi_\ep^0}{L^q(\Omega,\R^3)} \leq C \sup_{\nabla \varphi \in L^r(\Omega,\R^3)} \frac{1}{\norm{\nabla \varphi}{L^r(\Omega,\R^3)}} \int_\Omega \nabla \varphi \cdot \nabla \phi_\ep^0. 
        \end{equation}
        Note that both the continuity and the orthogonality of the components allow us to remove the dependence on $V$ for $\nabla \varphi$. 
        
        Now, using \eqref{rho uniform bound}, from \eqref{eqqqqqq} we deduce that
        \begin{align*}
            \norm{\nabla \phi_\ep^0}{L^q(\Omega,\R^3)} &\leq C\sup_{\nabla \varphi \in L^r(\Omega,\R^3)} \frac{1}{\norm{\nabla \varphi}{L^r(\Omega,\R^3)}} \int_\Omega \nabla \varphi \cdot \pmin^2 \nabla \phi_\ep^0 \\
            \notag &\stackrel{\eqref{phi rho equation}}{=} C \sup_{\nabla \varphi \in L^r(\Omega,\R^3)} \frac{1}{\norm{\nabla \varphi}{L^r(\Omega,\R^3)}} \int_\Omega \nabla \varphi \cdot \pmin^2 A^0_\ep\\
            \notag &\leq C \norm{\pmin^2 A^0_\ep}{L^q(\Omega,\R^3)} \\
            &\stackrel{\eqref{rho uniform bound}}{\leq} C \norm{A^0_\ep}{L^q(\Omega,\R^3)}.
        \end{align*}
        Inserting this into \eqref{curl b0 lp bound}, yields
        \begin{equation*}
            \norm{\curl B^0_\ep}{L^q(\Omega,\R^3)} \leq C \norm{A^0_\ep}{L^q(\Omega,\R^3)}.
        \end{equation*}
        Thus, by the continuous embeddings $H^2(\Omega,\R^3) \hookrightarrow L^q(\Omega,\R^3)$ and $W^{1,3}(\Omega,\R^3) \hookrightarrow L^q(\Omega,\R^3)$, we have
        \begin{align*}
            \norm{\curl B^0_\ep}{L^q(\Omega,\R^3)} 
            &\leq C\left(\norm{A^0_\ep-A_{0,\ex}}{L^q(\Omega,\R^3)} + \norm{A_{0,\ex}}{L^q(\Omega,\R^3)}\right) \\
            &\leq C\left(\norm{A^0_\ep - A_{0,\ex}}{H^2(\Omega,\R^3)} + \norm{A_{0,\ex}}{W^{1,3}(\Omega,\R^3)}\right)\\
            &\stackrel{\eqref{aex cgauge estimate} \& \eqref{a0 elliptic regularity}}{\leq} C\left(\norm{\curl B^0_\ep}{L^2(\Omega,\R^3)} + \norm{H_{0,\ex}}{L^3(\Omega,\R^3)}\right)\\
            &\stackrel{\eqref{initialboundenergy}}{\leq} C\left(\norm{H_{0,\ex}}{L^2(\Omega,\R^3)} + \norm{H_{0,\ex}}{L^3(\Omega,\R^3)}\right) \\
            &\leq C\norm{H_{0,\ex}}{L^3(\Omega,\R^3)}.
        \end{align*}

        Finally, due to the boundary condition satisfied by $B^0_\ep$, we can apply a Friedrichs--Poincaré type inequality (see for instance \cite{amrouche-seloula}*{Corollary 3.2}), to obtain
        $$
            \norm{B^0_\ep}{W^{1,q}(\Omega,\R^3)}
            \leq C \left(\norm{\diver B^0_\ep}{L^q(\Omega,\R^3)} +  \norm{\curl B^0_\ep}{L^q(\Omega,\R^3)}\right) \stackrel{\eqref{b0 properties}\& \eqref{initialboundenergy}}{\leq} C\norm{H_{0,\ex}}{L^3(\Omega,\R^3)},
        $$
        where $C = C(\Omega,b,q)>0$ does not depend on $\ep$. By Sobolev embedding for any $q>3$, we have
        $$\norm{B^0_\ep}{C^{0,\gamma}(\Omega,\R^3)} \leq C \norm{B^0_\ep}{W^{1,q}(\Omega,\R^3)} \leq C \norm{H_{0,\ex}}{L^3(\Omega,\R^3)},$$
        where $\gamma = 1-\frac{3}{q}\in (0,1)$.
    \end{proof}

    \begin{remark}
        Standard elliptic regularity results for the equations satisfied by $A^0_\ep$ and $\phi_\ep^0$ yield that $B^0_\ep \in C^{0,1}_T(\Omega,\R^3)$. Nevertheless, our argument for obtaining a uniform bound in $\ep$ fails when $q=\infty$ or $\gamma = 1$. The difficulty stems from the lack of a continuous Helmholtz--Hodge decomposition \eqref{hodge decomposition} in $L^1(\Omega, \mathbb{R}^3)$, which would be required to estimate the $L^\infty$-norm of $\nabla \phi^0_\ep$ in a manner that leverages the weak formulation of equation~\eqref{phi rho equation}.
    \end{remark}

    \subsection{Proof of the Proposition~\ref{prop:energy splitting}}
    We finish this section with a proof of the energy splitting result \eqref{energy splitting equation}.

    \begin{proof}
        We start by using \eqref{lm mag decoupling} to decouple the energy in $\Omega$. We have
        \begin{multline}\label{gl after lm}
            GL_\ep(\u,\A) = E_\ep(\pmin) + \frac{1}{2}\int_\Omega \left( \pmin^2|\nabla_\A (ue^{ih_\ex \phi_\ep^0})|^2 + \frac{\pmin^4}{2\ep^2}(1-|u|^2)^2 \right) \\+ \frac{1}{2}\int_{\R^3} |\curl A + h_\ex \curl(A^0_\ep - A_{0,\ex})|^2.     
        \end{multline}

        First, by expanding the square in the second term in the RHS of \eqref{gl after lm}, we have
        \begin{align*}
            \int_\Omega \pmin^2 |\nabla_\A (ue^{ih_\ex \phi_\ep^0})|^2 &= \int_\Omega \pmin^2|\nabla_A u + ih_\ex (\nabla \phi_\ep^0 - A^0_\ep)u|^2 \\
            &\stackrel{\eqref{b0 properties}}{=} \int_\Omega \pmin^2 |\nabla_A u|^2 + h_\ex^2 \pmin^2 |\nabla \phi_\ep^0 - A^0_\ep|^2 |u|^2 - 2h_\ex \ip{iu}{\nabla_A u}\cdot \curl B^0_\ep.
        \end{align*}

        On the other hand, by expanding the square in the rightmost term of \eqref{gl after lm}, we have
        \begin{align*}
            \int_{\R^3} |\curl A + h_\ex \curl(A^0_\ep - A_{0,\ex})|^2 &= \int_{\R^3} |\curl A|^2 + h_\ex^2 |\curl(A^0_\ep - A_{0,\ex})|^2 \\
            &\hspace*{3.5cm} + 2h_\ex \int_{\R^3}\curl A \cdot \curl(A^0_\ep - A_{0,\ex})\\
            &\stackrel{\eqref{weak euler lagrange b0}}{=} \int_{\R^3}|\curl A|^2 + h_\ex^2 |\curl (A^0_\ep-A_{0,\ex})|^2 \\
            &\hspace*{3.5cm}- 2 h_\ex \int_\Omega \curl B^0_\ep \cdot A.
        \end{align*}
        Since $B^0_\ep \times \nu = 0$ on $\partial \Omega$, by integration by parts, we have
        \begin{equation}\label{b0 int by parts}
            \int_\Omega (\ip{iu}{\nabla_A u} + A) \cdot \curl B^0_\ep = \int_\Omega \mu(u,A) \cdot B^0_\ep.
        \end{equation}
        Combining our previous computations and inserting into \eqref{gl after lm}, we then deduce that
        \begin{multline}\label{split midstep}
            GL_\ep(\u,\A) = 
            E_\ep(\pmin) + F_{\ep,\pmin}(u,A) + \frac{1}{2}\int_{\R^3 \setminus \Omega} |\curl A|^2 
            - h_\ex \int_\Omega \mu(u,A) \cdot B^0_\ep \\
            + \frac{h_\ex^2}{2} \int_\Omega \left( \pmin^2 |\nabla \phi_\ep^0 - A^0_\ep|^2|u|^2 + \int_{\R^3} |\curl(A^0_\ep - A_{0,\ex})|^2  \right).
        \end{multline}
        Finally, recalling that
        \begin{align*}
            GL_\ep(\pmin e^{i h_\ex \phi_\ep}, h_\ex A^0_\ep) = E_\ep(\pmin) + \frac{h_\ex^2}{2}\left(\int_\Omega \pmin^2 |\nabla \phi_\ep^0 - A^0_\ep|^2 + \int_{\R^3} |\curl(A^0_\ep - A_{0,\ex)}|^2 \right),
        \end{align*}
        we conclude the proof by writing $A^0_\ep - \nabla \phi_\ep$ as $\dfrac{\curl B^0_\ep}{\pmin^2}$ and $|u|^2$ as $1+(|u|^2-1)$ in \eqref{split midstep}, which yields
        \begin{multline*}
            GL_\ep(\u,\A) = GL_\ep(\pmin e^{i h_\ex \phi_\ep}, h_\ex A^0_\ep) + \fen(u,A) + \frac{1}{2} \int_{\R^3 \setminus \Omega} |\curl A|^2\\ - h_\ex \int_\Omega \mu(u,A) \cdot B^0_\ep 
            + \frac{h_\ex^2}{2} \int_\Omega \frac{|\curl B^0_\ep|^2}{\pmin^2} (|u|^2-1).
        \end{multline*}
    \end{proof}
\begin{remark}\label{remark choice of phi}
Recalling \eqref{phi rho equation}, we emphasize that the specific choice of $\phipmin{A}^0$ is what guarantees the validity of \eqref{b0 existence}. This is a key ingredient when integrating parts in \eqref{b0 int by parts}, as it enables the vorticity term to emerge in our energy decomposition. The function $\phipmin{A}^0$ thus serves the same purpose as the function $\phi_0$ in the splitting result of \cite{Rom-CMP}*{Proposition 3.1}, which pertains to the homogeneous case.
\end{remark}

\section{Weighted isoflux problem}\label{section isoflux}
    This section is devoted to the proof of Proposition \ref{prop:conditionliminf}. To establish the result, we begin by demonstrating a key property of the weighted isoflux problem (see Definition \ref{def weighted isoflux}), which serves as a foundational component of our argument.

    \medskip
    First, let us introduce a family of currents induced by differential forms. Any $2$-form $\alpha$ supported in $\overline{\Omega}$ induces a $1$-current $[\alpha]$, defined by its action on $1$-forms $\omega$ as
    \begin{equation*}
        \ip{[\alpha]}{\omega} \colonequals \int_\Omega \alpha \wedge \omega.
    \end{equation*}
    For $[\alpha]$ to be an admissible current for the isoflux problem---that is, $[\alpha] \in \NN$---it is necessary and sufficient that $\alpha$ be a closed $2$-form, which is formalized in the following proposition.
    \begin{prop}\label{current of 2form}
        Let $\alpha$ is a smooth, continuous up to the boundary and supported in $\overline{\Omega}$. We have $[\alpha] \in \mathcal{N}(\Omega)$ if and only if $d\alpha = 0$.
        Furthermore, its mass is equal to
        \begin{equation}\label{2form weight}
            |[\alpha]| = \norm{\alpha}{L^1(\Omega)},
        \end{equation}
        and, in the case $[\alpha] \in \NN$, its boundary $\partial[\alpha]$ is the $0$-current, defined by its action on $0$-forms $f$ as
        \begin{equation*}
            \ip{\partial [\alpha]}{f} = \int_{\partial \Omega} f\alpha.
        \end{equation*}
    \end{prop}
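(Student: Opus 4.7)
The plan is to unwind the three assertions directly from the defining formula $\ip{[\alpha]}{\omega}=\int_\Omega \alpha\wedge\omega$, using the identification of a $2$-form in $\R^3$ with a vector field, the Leibniz rule $d(f\alpha)=df\wedge\alpha+f\,d\alpha$, and Stokes' theorem. The regularity of $\alpha$ up to the boundary and the $C^2$-regularity of $\partial\Omega$ make all these manipulations legitimate.

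First, to compute the mass, I would write a generic $2$-form as $\alpha=\alpha_1\,dx^2\wedge dx^3+\alpha_2\,dx^3\wedge dx^1+\alpha_3\,dx^1\wedge dx^2$ and a generic $1$-form as $\omega=\omega_1\,dx^1+\omega_2\,dx^2+\omega_3\,dx^3$. A short computation gives $\alpha\wedge\omega=(\vec\alpha\cdot\vec\omega)\,dx^1\wedge dx^2\wedge dx^3$, so that
\[\ip{[\alpha]}{\omega}=\int_\Omega \vec\alpha\cdot\vec\omega\,dx.\]
Since the comass of $\omega$ at a point equals the Euclidean norm of $\vec\omega$, taking the supremum over $1$-forms with $\|\omega\|_\infty\le 1$ and using the $L^1$--$L^\infty$ duality yields $|[\alpha]|=\int_\Omega|\vec\alpha|\,dx=\|\alpha\|_{L^1(\Omega)}$, which is \eqref{2form weight}.

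Next, for the boundary, the definition of $\partial[\alpha]$ as a $0$-current gives, for a smooth function $f$ (viewed as a $0$-form),
\[\ip{\partial[\alpha]}{f}=\ip{[\alpha]}{df}=\int_\Omega\alpha\wedge df.\]
Since $df$ is a $1$-form and $\alpha$ a $2$-form, $df\wedge\alpha=\alpha\wedge df$, so the Leibniz rule reads $\alpha\wedge df=d(f\alpha)-f\,d\alpha$. Applying Stokes' theorem to $d(f\alpha)$ on $\Omega$, I obtain
\[\ip{\partial[\alpha]}{f}=\int_{\partial\Omega}f\alpha-\int_\Omega f\,d\alpha.\]

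Finally, the characterization of admissibility follows from this identity. Membership in $\NN$ requires both finite mass---which holds since $\alpha$ is continuous on $\overline\Omega$, so $\|\alpha\|_{L^1(\Omega)}<\infty$ by Step~1---and that $\partial[\alpha]$ be supported on $\partial\Omega$. The latter is equivalent to $\ip{\partial[\alpha]}{f}=0$ for every $f\in C^\infty_c(\Omega)$, which by the boundary formula reduces to $\int_\Omega f\,d\alpha=0$ for all such $f$, i.e., $d\alpha=0$ in $\Omega$. Conversely, when $d\alpha=0$ the volume integral drops out, leaving $\ip{\partial[\alpha]}{f}=\int_{\partial\Omega}f\alpha$, which has finite mass bounded by $\|\alpha\|_{L^\infty(\partial\Omega)}\cdot\mathcal H^2(\partial\Omega)$, so $[\alpha]\in\NN$ and the claimed boundary formula holds. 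There is no real obstacle here beyond ensuring the sign in the wedge identity is correctly tracked and that Stokes' theorem is applicable, both of which are routine under the stated hypotheses.
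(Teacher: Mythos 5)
Your proof is correct and follows essentially the same route as the paper: the mass is obtained from the $L^1$--$L^\infty$ duality after identifying the wedge pairing with the Euclidean inner product (a point the paper leaves more implicit), and the boundary formula comes from the Leibniz rule plus Stokes' theorem, with the equivalence $[\alpha]\in\NN\Leftrightarrow d\alpha=0$ read off from whether the interior term $\int_\Omega f\,d\alpha$ vanishes. No gaps; the only difference is that you spell out the vector-field identification and the comass-equals-Euclidean-norm fact more explicitly than the paper does.
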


    \begin{proof} We begin by proving that $[\alpha]$ has finite mass. Let $\mathcal{D}^1(\Omega)$ be the space of smooth, compactly supported $1$-forms in $\Omega$. By the dual characterization of $L^p$ norms, we have 
            \begin{align*}
                |[\alpha]| &= \sup_{\omega \in \mathcal{D}^1(\Omega), \norm{\omega}{L^\infty(\Omega)} = 1} \ip{[\alpha]}{\omega} \\
                &= \sup_{\omega \in \mathcal{D}^1(\Omega), \norm{\omega}{L^\infty(\Omega)} = 1}
                 \int_\Omega \alpha \wedge \omega \\
                &= \norm{\alpha}{L^1(\Omega)}<+\infty.
            \end{align*}
    Note that $\alpha$ is integrable, as it is continuous in $\overline{\Omega}$.

            \medskip 
            We now consider $\partial[\alpha]$. Let $f$ be a smooth $0$-form, that is, $f$ is a smooth function. In this context, the wedge product reduces to ordinary multiplication. Therefore, by Stokes' theorem, we have 
            $$
                \ip{\partial[\alpha]}{f} = \ip{[\alpha]}{df} 
                = \int_\Omega \alpha \wedge df 
                = \int_\Omega d(f\alpha) - fd\alpha
                = \int_{\partial \Omega} f\alpha - \int_\Omega fd\alpha.
            $$
            If $d\alpha = 0$, the current reduces to
            $$\ip{\partial[\alpha]}{f} = \int_{\partial \Omega} f\alpha.$$
            This means that $\partial[\alpha]$ is supported on $\partial \Omega$. 
            In addition, by arguing exactly as when computing $|[\alpha]|$, we find
            $$|\partial [\alpha]| = \norm{\alpha}{L^1(\partial \Omega)} < +\infty.$$
        We have thus proven that $[\alpha] \in \mathcal{N}(\Omega)$.
        On the other hand, if $[\alpha] \in \NN$, we then have that $\partial[\alpha]$ is supported in $\partial \Omega$. Therefore, for any $\phi \in C^\infty_0(\Omega)$, we have
            $$\ip{\partial [\alpha]}{\phi} = -\int_\Omega \phi d\alpha = 0.$$
        This implies that $d\alpha = 0$.
    \end{proof}

    We can now state a lower bound for $\max_{\Gamma \in \mathcal{N}(\Omega)} \RR_\eta(\Gamma)$ in the special case when $B$ is a solenoidal vector field (recall Definition \ref{def weighted isoflux}).
    \begin{prop}
        Suppose $\diver B = 0$ in $\Omega$. Then, we have
        \begin{equation}\label{isoflux lower bound}
            \max_{\Gamma \in \mathcal{N}(\Omega)} \RR_\eta(\Gamma) \geq \frac{\norm{B}{L^2(\Omega,\R^3)}}{\norm{\eta}{L^2(\Omega)}}.
        \end{equation}
    \end{prop}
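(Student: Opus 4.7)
The plan is to exhibit a single admissible current $\Gamma\in\NN$ that already achieves the lower bound, so that the maximum automatically dominates $\|B\|_{L^2}/\|\eta\|_{L^2}$. The natural candidate is the current induced by the Hodge-dual $2$-form of $B$.

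First, let $\alpha_B$ denote the $2$-form dual to the vector field $B=(B^1,B^2,B^3)$, namely
\[
\alpha_B \colonequals B^1\,dy\wedge dz + B^2\,dz\wedge dx + B^3\,dx\wedge dy,
\]
so that $d\alpha_B = (\diver B)\,dx\wedge dy\wedge dz = 0$ by hypothesis. Provided $B$ has sufficient regularity (smooth up to the boundary), Proposition~\ref{current of 2form} immediately gives $\Gamma\colonequals [\alpha_B]\in\NN$; otherwise a routine approximation by smooth divergence-free fields (standard in the $L^p$-theory of vector fields) transfers the conclusion, so this is not a genuine obstacle.

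Next, I would evaluate numerator and denominator of $\RR_\eta(\Gamma)$. For the numerator, the $1$-form dual to $B$ is $B^\flat = B^1\,dx + B^2\,dy + B^3\,dz$, and a direct computation gives $\alpha_B\wedge B^\flat = |B|^2\,dx\wedge dy\wedge dz$, so that
\[
\ip{\Gamma}{B} = \int_\Omega \alpha_B\wedge B^\flat = \int_\Omega |B|^2 = \|B\|_{L^2(\Omega,\R^3)}^2.
\]
For the denominator, I use the definition $\ip{\eta\Gamma}{\omega}=\ip{\Gamma}{\eta\omega}$, which means $\eta\Gamma = [\eta\alpha_B] = [\alpha_{\eta B}]$. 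Since $\eta B$ is still (pointwise) a vector field, the mass formula \eqref{2form weight} from Proposition~\ref{current of 2form} yields
\[
|\eta\Gamma| = \|\alpha_{\eta B}\|_{L^1(\Omega)} = \int_\Omega \eta\,|B|.
\]

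Finally, the inequality reduces to a one-line application of Cauchy--Schwarz: combining the two computations,
\[
\RR_\eta(\Gamma) = \frac{\|B\|_{L^2(\Omega,\R^3)}^2}{\int_\Omega \eta\,|B|} \geq \frac{\|B\|_{L^2(\Omega,\R^3)}^2}{\|\eta\|_{L^2(\Omega)}\,\|B\|_{L^2(\Omega,\R^3)}} = \frac{\|B\|_{L^2(\Omega,\R^3)}}{\|\eta\|_{L^2(\Omega)}},
\]
which gives \eqref{isoflux lower bound} upon taking the supremum over $\NN$. The only delicate point is matching the regularity of $B$ to the hypotheses of Proposition~\ref{current of 2form}; otherwise this is just a bookkeeping exercise in the identifications between $k$-forms and vector fields in~$\R^3$, followed by Cauchy--Schwarz.
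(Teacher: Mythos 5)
Your proof is correct and is essentially identical to the paper's argument: the paper also uses the current $[\star B]$ (your $[\alpha_B]$) as the competitor, applies Proposition~\ref{current of 2form} to compute its mass and that of $\eta[\star B]$, evaluates $\ip{[\star B]}{B}=\int_\Omega \star B\wedge B = \|B\|_{L^2}^2$, and finishes with Cauchy--Schwarz. Your remark about the regularity of $B$ being needed for Proposition~\ref{current of 2form} is a legitimate observation that the paper leaves implicit; it is not a gap in your argument, just careful bookkeeping.
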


    \begin{proof}
        Writing $B=B_xdx+B_ydy+B_zdz$, we have that $\star B$ is the $2$-form $B_x dy \wedge dz + B_y dz \wedge dx + B_z dx \wedge dy$, where $\star$ denotes the Hodge star operator. Moreover, $\star B$ is closed since $d (\star B) = (\diver B) dx \wedge dy \wedge dz = 0$. Thus, by Proposition \ref{current of 2form}, $[\star B] \in \mathcal{N}(\Omega)$ and $|[\star B]| = \norm{B}{L^1(\Omega,\R^3)}$. Moreover, it is not hard to see that $\eta[\star B]$ coincides with the current induced by the product $ [\eta \star B]$. This implies by \eqref{2form weight} that $|\eta [\star B]| = \norm{\eta B}{L^1(\Omega,\R^3)}$. Additionally, we have $\star B \wedge B = (|B_x|^2+|B_y|^2+|B_z|^2)dx\wedge dy \wedge dz$. By using $[\star B]$ as a competitor for the weighted isoflux problem, we have
        \begin{align*}
            \max_{\Gamma \in \mathcal{N}} \RR_\eta(\Gamma) \geq \RR_\eta([\star B]) 
            = \frac{\ip{[\star B]}{B}}{|\eta[\star B]|} 
            \stackrel{\eqref{2form weight}}{=} \frac{1}{\norm{\eta B}{L^1(\Omega,\R^3)}} \int_\Omega \star B \wedge B 
             &\geq \frac{\norm{B}{L^2(\Omega,\R^3)}^2}{\norm{\eta}{L^2(\Omega)} \norm{B}{L^2(\Omega,\R^3)}}\\
             &= \frac{\norm{B}{L^2(\Omega,\R^3)}}{\norm{\eta}{L^2(\Omega)}}.
        \end{align*}
    \end{proof}
    
    \subsection{Proof of Proposition~\ref{prop:conditionliminf}}
    We have all the ingredients to provide a proof for Proposition \ref{prop:conditionliminf}, which we recall applies to the special case $B=B_\ep^0$ and $\eta=\pmin^2$; see \eqref{def RRep}.    
    
    \begin{proof}[Proof of Proposition~\ref{prop:conditionliminf}]
        Assume $\liminf_{\ep \to 0^+} \RR_\ep= 0$. Since $\diver B_\ep^0=0$ in $\Omega$ and \eqref{rho uniform bound}, from \eqref{isoflux lower bound} it follows that, by passing to a subsequence if necessary, $B^0_\ep \to 0$ in $L^2(\Omega,\R^3)$. On the other hand, by taking $V=\Phi$ in \eqref{b0 integral equation}, where $\Phi$ is an arbitrary (but fixed in $\ep$) vector field in $C^\infty_0(\Omega,\R^3)$ with $\diver \Phi = 0$, we have, by integration by parts, that
        \begin{align*}
            \left|\int_\Omega H_{0,\ex} \cdot \Phi\right| = \left|\int_\Omega \frac{\curl B^0_\ep \cdot \curl \Phi}{\pmin^2} + B^0_\ep \cdot \Phi \right| 
            &\leq C \left|\int_\Omega \curl B^0_\ep \cdot \curl \Phi + B^0_\ep \cdot \Phi\right| \\
            &\leq C \left|\int_\Omega B^0_\ep \cdot \curl \curl \Phi + B^0_\ep \cdot \Phi\right|\\
            &\leq C \norm{B^0_\ep}{L^2(\Omega,\R^3)}\norm{\Phi}{H^2(\Omega,\R^3)},
        \end{align*}
        where we used an integration by parts. Note that, as $\ep \to 0$, the RHS converges to zero, while the LHS remains unchanged. This implies that 
        $$\int_\Omega H_{0,\ex} \cdot \Phi = 0, \quad \mbox{for any }\Phi \in C^\infty_0(\Omega,\R^3) \mbox{ such that }\diver \Phi = 0.
        $$ 
        Therefore, using \eqref{prop lsol characterizations dense}, we conclude that $\int_\Omega H_{0,\ex} \cdot A = 0$ for any $A \in \lsol^2(\Omega,\R^3)$, which, by orthogonality, implies that $H_{0,\ex}$ can be written as $\nabla \zeta$ for some $\zeta \in H^1(\Omega)$.This means that $\curl H_{0,\ex} \equiv 0$ in $\Omega$.
    \end{proof}
    
\section{Proof of the Main Result}\label{section main result}
\begin{proof}[Proof of Theorem \ref{main result lower bound}]
    We divide the proof into four steps. The first three focus on establishing items \ref{item weakvort} and \ref{item meisapprox}, while the final step addresses the analysis beyond the first critical field.
\begin{enumerate}[label=\textsc{\bf Step \arabic*.},leftmargin=0pt,labelsep=*,itemindent=*,itemsep=10pt,topsep=10pt]    
    \item {\bf Proving that $\fen(u,A) = O(h_\ex^2)$}. Since we are assuming that $(\u,\A)$ minimizes $GL_\ep$, we have 
    \begin{equation}\label{hypmeissner}
    GL_\ep(\u,\A) \leq GL_\ep(\meisconf).
    \end{equation}
    This implies that 
    \begin{equation}\label{free energy upper bound}
        \fen(u,A)+\frac{1}{2}\int_{\R^3 \setminus \Omega} |\curl A|^2 \stackrel{\eqref{energy splitting equation}}{\leq} h_\ex \int_\Omega  \mu(u,A) \cdot B^0_\ep + \err.
    \end{equation}
    We note that, by integration by parts,
    $$
    \int_\Omega  \mu(u,A) \cdot B^0_\ep=\int_\Omega \ip{iu}{\nabla_A u} \cdot \curl B^0_\ep +\int_\Omega \curl A \cdot B^0_\ep
    $$
    Therefore, using Hölder's inequality, we are led to
    \begin{align}\label{eqqqqq}
        \notag \fen(u,A) &\leq h_\ex \int_\Omega (\ip{iu}{\nabla_A u} \cdot \curl B^0_\ep + \curl A \cdot B^0_\ep) + \err \\
        &\notag\leq C\Bigg(h_\ex \norm{u}{L^4(\Omega,\C)} \norm{\nabla_A u}{L^2(\Omega,\C^3)} \norm{\curl B^0_\ep}{L^4(\Omega,\R^3)} \\
        &\notag\hspace*{7cm}+ \norm{\curl A}{L^2(\Omega,\R^3)} \norm{B^0_\ep}{L^2(\Omega,\R^3)} + |\err|\Bigg) \\
        &\stackrel{\eqref{b0 c01 regularity}}{\leq} C(h_\ex \norm{u}{L^4(\Omega,\C)} \norm{\nabla_A u}{L^2(\Omega,\C^3)} + \norm{\curl A}{L^2(\Omega,\R^3)} + |\err|).
    \end{align}
    Let us now individually bound from above the terms on the RHS. For $\norm{u}{L^4(\Omega,\C)}$, we use convexity to obtain
        $$
            \norm{u}{L^4(\Omega,\C)}^4 = \int_\Omega (1+(|u|^2-1))^2 \leq 2 \int_\Omega 1+(1-|u|^2)^2\leq C(1+\ep^2 \fen(u,A)),
        $$
        and therefore
        \begin{equation}\label{u l2 bound}
            \norm{u}{L^4(\Omega,\C)} \leq 
            C(1+\ep^{\frac{1}{2}}\fen(u,A)^{\frac14}).
        \end{equation}
        In the case of $\norm{\nabla_A u}{L^2(\Omega,\C^3)}$ and $\norm{\curl A}{L^2(\Omega,\R^3)}$, we have
        \begin{equation}\label{nabla u l2 bound}
            \norm{\nabla_A u}{L^2(\Omega,\C^3)} +\norm{\curl A}{L^2(\Omega,\R^3)}\leq C \fen(u,A)^\frac12.
        \end{equation}
        Finally, for $|\err|$, we have that
        \begin{align}\label{R0 bound}
            \notag|\err| &\leq \frac{h_\ex^2}{2} \int_\Omega \frac{|\curl B^0_\ep|^2}{\pmin^2}|(|u|^2-1)| \\
            \notag&\stackrel{\eqref{rho uniform bound}}{\leq} C h_\ex^2 \norm{\curl B^0_\ep}{L^4(\Omega,\R^3)}^2 \norm{1-|u|^2}{L^2(\Omega,\C)}\\
            &\stackrel{\eqref{b0 c01 regularity}}{\leq} Ch_\ex^2 \ep \fen(u,A)^\frac12.
        \end{align}
    Thus, from \eqref{eqqqqq}, \eqref{u l2 bound}, \eqref{nabla u l2 bound}, and \eqref{R0 bound}, we conclude that
    \begin{equation*}
        \fen(u,A) = O\left(h_\ex \fen(u,A)^\frac12\right),
    \end{equation*}
    which implies that there exists $C>0$ such that, for any $\ep$ sufficiently small, we have 
    \begin{equation}\label{free energy hex bound}
        \fen(u,A) \leq C h_\ex^2.
    \end{equation}

    \item {\bf Vorticity estimate. Proof of item \ref{item weakvort}.} Since $h_\ex = O(|\log \ep|)$, using \eqref{free energy hex bound}, we have 
    \begin{equation}\label{free energy log bound}
        \fen(u,A) \leq C|\log \ep|^2
    \end{equation}
    for some $C>0$ not depending on $\ep$. This estimate allows for applying Theorem~\ref{epleveltools}, which is crucial in deriving our quantitative estimates on $\fen(u,A)$ and $\mu(u,A)$. Using \eqref{vorticity estimate}, we deduce that, for any $\gamma \in (0,1)$,
    \begin{align}\label{mu nu approximation}
        \notag\int_\Omega \mu(u,A) \cdot B^0_\ep &\leq \norm{\mu(u,A) - \nu_\ep}{(C^{0,\gamma}_T(\Omega,\R^3))^*}\norm{B^0_\ep}{C^{0,\gamma}_T(\Omega,\R^3)} + \ip{\nu_\ep}{B^0_\ep}\\
        &\stackrel{\eqref{b0 c01 regularity} \& \eqref{free energy log bound}}{\leq} \ip{\nu_\ep}{B^0_\ep} + C(|\log \ep|^{2-\gamma q}).
    \end{align}
    By inserting \eqref{weighted lower bound} and \eqref{mu nu approximation} into \eqref{free energy upper bound}, and using \eqref{R0 bound} to control $|\err|$, we deduce, by choosing a large enough $n$ in \eqref{weighted lower bound}, and hence a large enough $q$, that
    \begin{multline}\label{free energy vorticity ep level estimates}
        \fen(u,A) - h_\ex \int_\Omega \mu(u,A) \cdot B^0_\ep \geq \frac{1}{2}|\pmin^2 \nu_\ep|\left(|\log \ep| - C \log|\log \ep| \right) \\- h_\ex \ip{\nu_\ep}{B^0_\ep} + o(|\log \ep|^{-2}).
    \end{multline}
    On the other hand, by definition of $\RR_\ep$ (recall \eqref{def RRep}), we have
    \begin{equation*}
        \ip{\nu_\ep}{B^0_\ep} \leq |\pmin^2 \nu_\ep| \RR_\ep.
    \end{equation*}
    Inserting this into \eqref{free energy vorticity ep level estimates}, we find that
    \begin{multline*}
        \fen(u,A) - h_\ex \int_\Omega \mu(u,A) \cdot B^0_\ep
        \geq \frac{1}{2} |\pmin^2 \nu_\ep|\left(|\log \ep| - 2h_\ex \RR_\ep - C \log|\log \ep| \right) + o(|\log \ep|^{-2}).
    \end{multline*}
    Using that $h_\ex\leq H_{c_1}^\ep-K_0\log|\log\ep|$, we observe that the leading-order term in the RHS cancels out, allowing us to deduce
    \begin{equation*}
        \fen(u,A) - h_\ex \int_\Omega \mu(u,A) \cdot B^0_\ep \geq \frac{1}{2} |\pmin^2 \nu_\ep|(2K_0 \RR_\ep - C)\log|\log \ep| + o(|\log \ep|^{-2}).
    \end{equation*}
    Observe that the assumption $\liminf_{\ep \to 0^+}\RR_\ep > 0$ then allows us to choose $K_0$, independent of $\ep$, such that $2K_0 \RR_\ep - C > 1$, leading us to
    \begin{equation}\label{EEq1}
        \fen(u,A) - h_\ex \int_\Omega \mu(u,A) \cdot B^0_\ep \geq \frac{1}{2} |\pmin^2 \nu_\ep| \log |\log \ep| + o(|\log \ep|^{-2}).
    \end{equation}
    
    On the other hand, from \eqref{free energy upper bound} and \eqref{R0 bound}, we deduce that
    \begin{equation}\label{free energy vorticity upper bound}
        \fen(u,A) + \frac{1}{2} \int_{\R^3 \setminus \Omega} |\curl A|^2 - h_\ex \int_\Omega \mu(u,A) \cdot B^0_\ep \leq |\err| =O(\ep |\log \ep|^3).
    \end{equation}
    Hence, by combining \eqref{EEq1} with \eqref{free energy vorticity upper bound}, we find that
    \begin{equation*}
        |\pmin^2 \nu_\ep| = o(|\log \ep|^{-1}).
    \end{equation*}
    Moreover, using \eqref{rho uniform bound} we find
    \begin{equation*}
        |\nu_\ep| \leq \frac{1}{b} |\pmin^2 \nu_\ep| = o(|\log \ep|^{-1}),
    \end{equation*}
    and therefore $\|\nu_\ep\|_{(C^{0,\gamma}_T(\Omega,\R^3))^*}=o(|\log \ep|^{-1})$ for any $\gamma\in (0,1]$. From the vorticity estimate \eqref{vorticity estimate}, we then deduce that
    \begin{equation}\label{vorticity weak converges to 0}
    \|\mu(u,A)\|_{(C^{0,\gamma}_T(\Omega,\R^3))^*}=o(|\log \ep|^{-1})\quad \mbox{for any }\gamma\in(0,1],
    \end{equation}
    which finishes the proof of item \ref{item weakvort}.

    \item {\bf Meissner configuration approximation. Proof of item \ref{item meisapprox}.} Using \eqref{vorticity weak converges to 0}, from \eqref{free energy vorticity upper bound} we find
    \begin{equation}\label{free energy converges to 0}
        \fen(u,A) + \frac{1}{2}\int_{\R^3 \setminus \Omega}|\curl A|^2 = o(1).
    \end{equation}
    Finally, by inserting \eqref{free energy converges to 0}, \eqref{vorticity weak converges to 0}, and \eqref{R0 bound} into the energy splitting \eqref{energy splitting equation}, we conclude that
    \begin{equation}\label{meissner approximation}
        GL_\ep(\u,\A) = GL_\ep(\meisconf) + o(1),
    \end{equation}
    which means that item \ref{item meisapprox} is satisfied. 
    
    \item {\bf Energetically optimal vortexless configuration beyond $\critfield$.} Now suppose we have a configuration $(\u,\A)$ such that $GL_\ep(\u,\A) \leq GL_\ep(\meisconf)$, $|\u| > c > 0$ and $h_\ex \leq \ep^{-\alpha}$ for some $\alpha < \frac{1}{2}$. Note that we also have $|u| > c > 0$. By appealing to a vorticity estimate for configurations satisfying such a lower bound, as the one provided in \cite{Rom-CMP}*{Proposition A.1}, we obtain that there exists a constant $C=C(\Omega)>0$ such that
    \begin{equation*}
        \norm{\mu(u,A)}{(C^{0,1}_T(\Omega,\R^3))^*} \leq C \ep F_\ep(u,A),
    \end{equation*}
    where $F_\ep(u,A)$ is the homogeneous free energy functional (that is, $\fen(u,A)$ with $\pmin \equiv 1$). In addition, from \cite{roman-vortex-construction}*{Lemma 8.1}, we have
    $$
    \norm{\mu(u,A)}{(C^0(\Omega,\R^3))^*} \leq C F_\ep(u,A).
    $$
    Given $\gamma \in (0,1)$, we can then derive an estimate for $\norm{\mu(u,A)}{(C^{0,\gamma}_T(\Omega,\R^3))^*}$ by interpolating between the norms in $(C^{0,1}_T(\Omega,\R^3))^*$ and $(C^0(\Omega,\R^3))^*$. In fact, we have
    $$
    \norm{\mu(u,A)}{(C^{0,\gamma}_T(\Omega,\R^3)^*} \leq \norm{\mu(u,A)}{(C^{0,1}_T(\Omega,\R^3)^*}^{\gamma} \norm{\mu(u,A)}{(C^0(\Omega,\R^3))^*}^{1-\gamma}= C\ep^{\gamma} F_\ep(u,A).
    $$
    Using the uniform bounds for $\pmin$ \eqref{rho uniform bound}, we conclude that
    \begin{equation}\label{improved vorticity estimates}
        \norm{\mu(u,A)}{(C^{0,\gamma}_T(\Omega,\R^3)^*} \leq C \ep^\gamma \fen(u,A),
    \end{equation}
    where $C=C(\Omega,b)>0$.

    \medskip
    Since we assume that \eqref{hypmeissner} holds, \eqref{free energy upper bound} remains valid. Then
    \begin{align*}
        \fen(u,A) + \frac{1}{2}\int_{\R^3 \setminus \Omega}|\curl A|^2 &\stackrel{\eqref{improved vorticity estimates} \& \eqref{b0 holderreg} \& \eqref{R0 bound}}{\leq}  C \left(h_\ex \ep^\gamma \fen(u,A) + h_\ex^2 \ep \fen(u,A)^{\frac{1}{2}}\right) \\
        &\hspace*{1cm}\stackrel{\eqref{free energy hex bound}}{\leq} C\ep^\gamma h_\ex^2 \fen(u,A)^\frac{1}{2}.
    \end{align*}
    It follows that
    \begin{equation*}
        \left(\fen(u,A) + \frac{1}{2}\int_{\R^3 \setminus \Omega} |\curl A|^2 \right)^\frac{1}{2} \leq C\ep^\gamma h_\ex^2.
    \end{equation*}
    Since we are assuming $h_\ex \leq \ep^{-\alpha}$ for $\alpha < \frac{1}{2}$, we choose $\gamma \in (2\alpha,1)$ to obtain
    \begin{equation}\label{free energy over hc1 goes to 0}
        \fen(u,A) + \frac{1}{2}\int_{\R^3 \setminus \Omega} |\curl A|^2 = o(1).
    \end{equation}
    Inserting \eqref{improved vorticity estimates}, \eqref{free energy over hc1 goes to 0}, and \eqref{R0 bound} into the energy splitting \eqref{energy splitting equation}, yields that \eqref{meissner approximation} is valid when $h_\ex\leq \ep^{-\alpha}$ for $\alpha<\frac12$. 
    \end{enumerate}
    The proof is thus finished.
\end{proof}

\section{\texorpdfstring{$\ep$}{ε}-level estimates for a weighted Ginzburg--Landau functional}\label{section eplevel}
In this section, we provide the changes needed to adapt the proof of the $\ep$-level estimates for the homogeneous Ginzburg--Landau functional in \cite{roman-vortex-construction}*{Theorem 1.1}, to the weighted case. The role played by the standard free-energy within $\Omega$, that is,
$$
F_\ep(u,A)=\frac12 \int_\Omega |\nabla_A u|^2+\frac1{2\ep^2}(1-|u|^2)^2+|\curl A|^2,
$$
will be replaced by $\fen(u,A)$. A crucial observation for adapting the arguments is that, since $0<c\leq \rho_\ep\leq 1$, we have that
\begin{equation}\label{crucialcomparisonenergy}
c^4 F_\ep(u,A)\leq \fen(u,A)\leq F_\ep(u,A).
\end{equation}

In what follows, we use the notation
$$
\eed(u)\colonequals \frac{\pmin^2}2  |\nabla u|^2+\frac{\pmin^4}{4\ep^2}(1-|u|^2)^2\quad \mbox{and}\quad 
\eed(u,A)\colonequals \eed(u)+|\curl A|^2.
$$

The construction in \cite{roman-vortex-construction} is lengthy and technical. For the reader's convenience, we will outline the key elements of the construction, though we will focus primarily on highlighting the main changes in the proofs. Interested readers are encouraged to track these changes alongside \cite{roman-vortex-construction}.

\subsection{Choice of grid}
Let us fix an orthonormal basis $(e_1,e_2,e_3)$ of $\R^3$ and consider a grid $\GG=\GG(a,\de)$ given by the collection of closed cubes $\CC_i\subset \R^3$ of side-length $\de=\de(\ep)$. In the grid we use a system of coordinates with origin in $a \in \Omega$ and orthonormal directions $(e_1,e_2,e_3)$. 
From now on we denote by $\RRR_1$ (respectively $\RRR_2$) the union of all edges (respectively faces) of the cubes of the grid. We have the following lemma, which directly follows from \cite{roman-vortex-construction}*{Lemma 2.1} by replacing $F_\ep(u,A)$ by $\fen(u,A)$ in view of \eqref{crucialcomparisonenergy}.

\begin{lemma}[Choice of grid]\label{Lemma:Grid} 
	For any $\gamma\in(-1,1)$ there exist constants $c_0(\gamma),c_1(\gamma)>0$, $\de_0(\Omega)\in(0,1)$ such that, for any $\ep,\de>0$ satisfying 
	$$
	\ep^{\frac{1-\gamma}2}\leq c_0\quad \mathrm{and}\quad c_1\ep^{\frac{1-\gamma}4}\leq \de \leq \de_0,
	$$
	if $(u,A)\in H^1(\Omega,\C)\times H^1(\Omega,\R^3)$ is a configuration such that $\fen(u,A)\leq \ep^{-\gamma}$ then there exists $b_\ep\in \Omega$ such that the grid $\GG(b_\ep,\delta)$ satisfies
	\begin{subequations}\label{propGrid}
		\begin{equation}\label{prop1Grid}
			|u_\ep|>5/8\quad \mathrm{on}\ \RRR_1(\GG(b_\ep,\de))\cap \Omega,
		\end{equation}
		\begin{equation*}
			\int\limits_{\RRR_1(\GG(b_\ep,\delta))\cap \Omega}\een(u,A)d\H^1\leq  C \de^{-2}\fen(u,A),
		\end{equation*}
		\begin{equation*}
			\int\limits_{\RRR_2(\GG(b_\ep,\delta))\cap \Omega}\een(u,A)d\H^2\leq  C \de^{-1}\fen(u,A),
		\end{equation*}
	\end{subequations}
	where $C$ is a universal constant, and where hereafter $\H^d$ denotes the $d$-dimensional Hausdorff measure for $d\in \N$.
\end{lemma}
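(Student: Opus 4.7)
The plan is to follow the Fubini-plus-pigeonhole strategy of \cite{roman-vortex-construction}*{Lemma 2.1} essentially line by line, with the only modification being the systematic replacement of the homogeneous density by the weighted density $\eed$, enabled by the two-sided comparison \eqref{crucialcomparisonenergy}. The argument decomposes into two independent ingredients, combined at the end by a union-of-bad-sets argument.

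For the skeleton energy estimates, a direct Fubini computation yields, for any nonnegative $f\in L^1(\Omega)$,
$$\frac{1}{\de^3}\int_{[0,\de]^3}\int_{\RRR_k(\GG(a,\de))\cap\Omega} f\,d\H^k\,da \;=\; \frac{3}{\de^{3-k}}\int_\Omega f\,dx,\qquad k\in\{1,2\},$$
which follows by decomposing the $k$-skeleton according to the orientation of each piece and integrating out the appropriate transverse coordinates. Applying this with $f=\eed(u,A)$ and using the comparison \eqref{crucialcomparisonenergy} in the form $\int_\Omega \eed(u,A)\,dx \leq F_\ep(u,A)\leq c^{-4}\fen(u,A) \leq c^{-4}\ep^{-\gamma}$, Chebyshev's inequality produces a set of origins $a\in[0,\de]^3$ of measure at least $2\de^3/3$ on which both the 1-skeleton and 2-skeleton bounds hold up to a universal multiple of the averages.

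To ensure $|u|>5/8$ on the 1-skeleton, set $B_\ep \colonequals \{x\in\Omega : |u(x)|\leq 5/8\}$. The uniform lower bound $\pmin\geq \sqrt{b}$ yields $\eed(u)(x)\geq c b^2/\ep^2$ pointwise on $B_\ep$, hence
$$|B_\ep|\;\leq\; C\ep^2 \int_\Omega \eed(u,A)\,dx \;\leq\; C\ep^{2-\gamma}.$$
Following the covering argument of \cite{roman-vortex-construction}*{Lemma 2.1}, one covers $B_\ep$ by balls of radius $\rho$ and estimates the measure of origins $a\in[0,\de]^3$ for which the 1-skeleton meets the covering. A single ball of radius $\rho$ is struck by some edge of $\RRR_1(\GG(a,\de))$ for a set of translations of measure $\lesssim \rho^2\de$, so the total bad set has measure $\lesssim |B_\ep|\,\de/\rho$. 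The assumptions $\ep^{(1-\gamma)/2}\leq c_0$ and $\de\geq c_1\ep^{(1-\gamma)/4}$ are calibrated precisely so that $\rho$ can be chosen to make this quantity strictly less than $\de^3/3$ while still satisfying the admissibility constraint $\rho<\de/2$ needed for the covering.

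Combining the two steps, the union of bad translations has measure $<\de^3$, so a valid $b_\ep$ exists and simultaneously satisfies all three conclusions; one final application of \eqref{crucialcomparisonenergy} on the right-hand side replaces $\int_\Omega \eed(u,A)\,dx$ by $\fen(u,A)$ at the cost of modifying the universal constant. The main obstacle is purely inherited from the homogeneous setting, namely the covering step and the balance determining the exponent $(1-\gamma)/4$; in the weighted framework no genuinely new input is required, since the smallness of $|B_\ep|$ rests entirely on the uniform lower bound $\pmin\geq \sqrt{b}$.
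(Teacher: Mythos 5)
Your proposal takes essentially the same approach as the paper, which disposes of this lemma in a single sentence: it follows from \cite{roman-vortex-construction}*{Lemma 2.1} after replacing $F_\ep(u,A)$ by $\fen(u,A)$ in view of the two-sided comparison \eqref{crucialcomparisonenergy}. You have simply opened up the cited reference and rehearsed its Fubini/mean-value/pigeonhole argument, threading the comparison $c^4 F_\ep\le\fen\le F_\ep$ (together with the uniform bound $b\le\pmin^2\le1$ for the potential term on $B_\ep$) through the steps --- which is exactly the content of the paper's one-line proof.
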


From now on we drop the cubes of the grid $\GG(b_\ep,\de)$ given by the previous lemma, whose intersection with $\partial\Omega$ is non-empty. We also define 
\begin{equation}
	\Theta\colonequals \Omega \setminus \cup_{\CC_l\in \GG} \CC_l\quad\mathrm{and}\quad \partial \GG \colonequals \partial \left(\cup_{\CC_l\in \GG} \CC_l \right)\label{unioncubes}.
\end{equation}
Observe that, in particular, $\partial \Theta=\partial \GG \cup \partial \Omega$.

We remark that $\GG(b_\ep,\de)$ carries a natural orientation. The boundary of every cube of the grid will be oriented accordingly to this orientation. Each time we refer to a face $\omega$ of a cube $\CC$, it will be considered to be oriented with the same orientation of $\partial\CC$. If we refer to a face $\omega\subset \partial\GG$, then the orientation used is the same of $\partial\GG$.

\subsection{Ball construction method on a surface for the weighted functional}\label{subsec:ballmethodsurf}
The primary modification required to adapt the $\ep$-level estimates from \cite{roman-vortex-construction} to the case of a weighted functional involves incorporating the weight when developing the ball construction method on a surface. 

In this construction we will not need to obtain an estimate with ball per ball precision, but rather globally estimating the square of the weight by its minimum on the whole surface. Nevertheless, this is not trivial, since the weight appears with two different powers in the energy density $\eed(u)$ and, when bounding the energy from below, we need to extract the leading order power (i.e. $\pmin^2$, since $0<\pmin\leq 1$).

Our main result in this section is the following modification of \cite{roman-vortex-construction}*{Proposition 3.1}. It is worth recalling that this proposition extends the construction made by Sandier \cite{sandier-surface-ball-construction} of the ball construction on a surface, when the number of vortices is bounded, and that follows the method of Jerrard \cite{jerrard}.
\begin{prop}\label{ball}
	Let $\tilde \Sigma$ be a complete oriented surface in $\R^3$ whose second fundamental form is bounded by $1$. Let $\Sigma$ be a bounded open subset of $\tilde \Sigma$. For any $m,M>0$ there exists $\ep_0(m,M)>0$ such that, for any $\ep\in (0,\ep_0)$, if $u_\ep\in H^1(\Sigma,\C)$ satisfies
	$$
	\een(u_\ep,\Sigma)\colonequals  \int_\Sigma\eed(u_\ep) \leq M|\log \ep|^m
	$$
	and
	\begin{equation*}
		|u_\ep(x)|\geq \frac12\quad \textnormal{if } \d(x,\partial \Sigma)<1, 
	\end{equation*}
	where $\d(\cdot,\cdot)$ denotes the distance function in $\tilde \Sigma$, then, letting $d$ be the winding number of $u_\ep/|u_\ep|:\partial\Sigma\to S^1$ and $\mathcal M_\ep=M|\log \ep|^m$ we have 
	\begin{equation*}
	\een(u_\ep,\Sigma)\geq \pi |d|\min_{\tilde \Sigma} \pmin^2\left(\log \frac1{\ep} -\log \mathcal M_\ep \right).
	\end{equation*}
\end{prop}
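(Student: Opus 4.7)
\emph{Approach.} My plan is to reduce this weighted ball-construction statement to the homogeneous one (Sandier's construction in \cite{sandier-surface-ball-construction}, or equivalently the $\pmin\equiv 1$ case of \cite{roman-vortex-construction}*{Prop.\ 3.1}), by exploiting the uniform two-sided bound $\sqrt b\le \pmin\le 1$ to dominate $\eed$ pointwise from below by a constant multiple of an unweighted Ginzburg--Landau density at a slightly rescaled length scale. Once this pointwise comparison is in place, the result will follow by applying the known homogeneous theorem and translating back.

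\emph{Implementation.} Set $\eta\colonequals \min_{\tilde\Sigma}\pmin \in [\sqrt b,1]$ and $\tilde\ep \colonequals \ep/\eta$, and let $f_{\tilde\ep}(u):=\tfrac12|\nabla u|^2+\tfrac{1}{4\tilde\ep^2}(1-|u|^2)^2$ denote the standard unweighted Ginzburg--Landau density at scale $\tilde\ep$. Using $\eta^2\le\pmin^2$ and $\eta^4\le\pmin^4$, one immediately obtains the pointwise estimate
\begin{equation*}
\eed(u)\ge \eta^2 f_{\tilde\ep}(u),
\end{equation*}
together with the reverse inequality $f_{\tilde\ep}(u)\le \eta^{-2}\eed(u)$. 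Integrating, the hypothesis $\een(u_\ep,\Sigma)\le\mathcal M_\ep$ translates into $\int_\Sigma f_{\tilde\ep}(u_\ep) \le (M/b)|\log\ep|^m$, which, since $|\log\tilde\ep|=|\log\ep|+O(1)$, is of the form $M'|\log\tilde\ep|^m$ for some $M'=M'(M,m,b)$. The boundary condition $|u_\ep|\ge 1/2$ on $\{\d(\cdot,\partial\Sigma)<1\}$ and the winding number $d$ are unchanged by this rescaling, so applying Sandier's result in the homogeneous case to $u_\ep$ at scale $\tilde\ep$ yields $\int_\Sigma f_{\tilde\ep}(u_\ep)\ge \pi|d|\bigl(\log(1/\tilde\ep)-\log(M'|\log\tilde\ep|^m)\bigr)$. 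Multiplying by $\eta^2$ and using the pointwise lower bound then gives
\begin{equation*}
\een(u_\ep,\Sigma)\ge \pi|d|\min_{\tilde\Sigma}\pmin^2\bigl(\log(1/\ep)-\log\mathcal M_\ep - C_0\bigr),
\end{equation*}
for some $C_0=C_0(M,m,b)$ coming from $\log\eta$ and $\log(M'/M)$. For $\ep<\ep_0(m,M,b)$ small enough, $C_0$ is absorbed into the leading $\log(1/\ep)$ (or, equivalently, into the logarithmic error $\log\mathcal M_\ep$ by enlarging $M$), delivering the stated bound.

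\emph{Main obstacle.} The delicate point, flagged by the authors just before the statement, is that $\pmin$ enters $\eed$ with two different powers ($\pmin^2$ in the gradient term, $\pmin^4$ in the potential term), so one cannot simply pull out $\pmin^2$ and be left with a clean $\ep$-scale unweighted density. Extracting the common factor $\min_{\tilde\Sigma}\pmin^2$ uniformly forces the length rescaling $\ep\mapsto\tilde\ep=\ep/\eta$, and the main bookkeeping will go into verifying that this rescaling, together with the constant change $M\mapsto M/b$, contributes only an $O(1)$ error to $\log(1/\ep)-\log\mathcal M_\ep$. This is automatic from the uniform bounds $\sqrt b\le \pmin\le 1$, but it is precisely why the argument does not extend to degenerate or $\ep$-dependent pinning terms.
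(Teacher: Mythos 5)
You take a genuinely different route from the paper. The paper works sphere by sphere: it adapts the Jerrard-type lower bound on geodesic spheres $S(x,t)$ from Sandier and from \cite{roman-vortex-construction} (the unnumbered Lemma preceding the proof), pulling $\min_{\tilde\Sigma}\pmin^2$ out of the bound on each sphere while keeping $\ep$ fixed, and then runs the remaining ball construction unchanged. You instead perform a single global pointwise domination of $\eed(u)$ by $\eta^2$ times the standard unweighted Ginzburg--Landau density at the rescaled scale $\tilde\ep=\ep/\eta$, where $\eta=\min_{\tilde\Sigma}\pmin$, and invoke the unweighted theorem as a black box at scale $\tilde\ep$. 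This reduction is cleaner, and you correctly identify that the $\ep$-rescaling is what reconciles the two different powers of $\pmin$.

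The catch is that the reduction costs a fixed positive constant that your closing sentence does not, in fact, absorb. Tracking it: $\log(1/\tilde\ep)=\log(1/\ep)+\log\eta$, and verifying the hypothesis of the unweighted theorem forces $M'\geq(1+o(1))\eta^{-2}M$, so after multiplying by $\eta^2$ you actually prove
\begin{equation*}
\een(u_\ep,\Sigma)\ \geq\ \pi|d|\,\eta^2\left(\log\frac{1}{\ep}-\log\mathcal M_\ep+3\log\eta+o(1)\right),
\end{equation*}
with $3\log\eta\leq\frac32\log b<0$. A fixed negative constant cannot be ``absorbed into the leading $\log(1/\ep)$'' (that term is additive, not a factor one can inflate), and ``enlarging $M$'' changes $\mathcal M_\ep$, which is prescribed by the hypothesis, so that would prove a different, weaker inequality. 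Your proposal therefore establishes Proposition~\ref{ball} only with an extra $-C_0(b)$ inside the parenthesis. This is harmless downstream---Corollary~\ref{ball2} and Propositions~\ref{propcubes}, \ref{propboundary} already carry universal constants $C$ inside the logarithm, which swallow an extra $e^{C_0}$---but if you want the verbatim statement you should follow the paper's sphere-level extraction of $\min_{\tilde\Sigma}\pmin^2$; otherwise, state the $-C_0(b)$ explicitly and observe that it suffices for the subsequent estimates.
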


An immediate consequence of this result, is the following modification of \cite{roman-vortex-construction}*{Corollary 3.1}.

\begin{corollary}\label{ball2}
	Let $\tilde \Sigma$ be a complete oriented surface in $\R^3$ whose second fundamental form is bounded by $Q_\ep=Q|\log\ep|^q$, where $q,Q>0$ are given numbers. Let $\Sigma$ be a bounded open subset of $\tilde \Sigma$. For any $m,M>0$ there exists $\ep_0(m,q,M,Q)>0$ such that, for any $\ep\in (0,\ep_0)$, if $u_\ep\in H^1(\Sigma,\C)$ satisfies
	$$
	\een(u_\ep,\Sigma)\colonequals  \int_\Sigma\eed(u_\ep) \leq M|\log \ep|^m
	$$
	and
	\begin{equation*}
		|u_\ep(x)|\geq \frac12\quad \textnormal{if } \d(x,\partial \Sigma)<Q_\ep^{-1}, 
	\end{equation*}
	where $\d(\cdot,\cdot)$ denotes the distance function in $\tilde \Sigma$, then, letting $d$ be the winding number of $u_\ep/|u_\ep|:\partial\Sigma\to S^1$ and $\mathcal M_\ep=M|\log \ep|^m$ we have 
	$$
	E_\ep(u_\ep,\Sigma)\geq \pi |d|\min_{\tilde \Sigma} \pmin^2\left(\log \frac1{\ep} -\log \mathcal M_\ep Q_\ep \right).
	$$	
\end{corollary}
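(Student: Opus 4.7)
The plan is to reduce Corollary~\ref{ball2} to Proposition~\ref{ball} by a global rescaling of the ambient surface by the factor $Q_\ep$, so that the rescaled surface $\tilde\Sigma'\colonequals Q_\ep\tilde\Sigma$ has second fundamental form bounded by $1$. Set $\Sigma'\colonequals Q_\ep\Sigma$ and define the pulled-back data $\tilde u(y)\colonequals u(y/Q_\ep)$ and $\tilde\rho(y)\colonequals \pmin(y/Q_\ep)$ on $\Sigma'$. By the standard change of variables on surfaces, the Dirichlet term is conformally invariant in dimension two:
\begin{equation*}
\int_{\Sigma'} \tilde\rho^{\,2}|\nabla \tilde u|^2\,dA' = \int_\Sigma \pmin^2|\nabla u|^2\,dA,
\end{equation*}
whereas the factor $Q_\ep^2$ coming from the area element in the potential term forces the choice $\tilde\ep\colonequals \ep Q_\ep$ so that
\begin{equation*}
\int_{\Sigma'}\frac{\tilde\rho^{\,4}}{2\tilde\ep^{\,2}}(1-|\tilde u|^2)^2\,dA' = \int_\Sigma\frac{\pmin^4}{2\ep^2}(1-|u|^2)^2\,dA.
\end{equation*}
In particular, denoting by $\tilde E_{\tilde\ep,\tilde\rho}$ the weighted energy on $\Sigma'$ built with $\tilde\rho$ and $\tilde\ep$, we have the identity $\tilde E_{\tilde\ep,\tilde\rho}(\tilde u,\Sigma')=\een(u,\Sigma)$.

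Next I would verify that the hypotheses of Proposition~\ref{ball} are met for the rescaled data. Distances on $\tilde\Sigma'$ scale as $\d'(y_1,y_2)=Q_\ep\,\d(y_1/Q_\ep,y_2/Q_\ep)$, so the hypothesis $|u|\geq 1/2$ on the $Q_\ep^{-1}$-neighborhood of $\partial\Sigma$ translates into $|\tilde u|\geq 1/2$ on the $1$-neighborhood of $\partial\Sigma'$, and the winding number of $\tilde u/|\tilde u|$ on $\partial\Sigma'$ coincides with $d$. Since $\tilde\ep=\ep Q|\log\ep|^q$, one has $|\log\tilde\ep|=|\log\ep|+O(\log|\log\ep|)$; consequently the energy bound becomes $\tilde E_{\tilde\ep,\tilde\rho}(\tilde u,\Sigma')\leq \tilde M|\log\tilde\ep|^m$ for some $\tilde M=\tilde M(m,q,M,Q)$, provided $\ep$ is small enough.

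Applying Proposition~\ref{ball} to the rescaled configuration and then translating back yields
\begin{equation*}
\een(u,\Sigma)=\tilde E_{\tilde\ep,\tilde\rho}(\tilde u,\Sigma')\geq \pi|d|\min_{\tilde\Sigma'}\tilde\rho^{\,2}\left(\log\frac{1}{\tilde\ep} - \log\tilde{\mathcal M}_{\tilde\ep}\right),
\end{equation*}
with $\tilde{\mathcal M}_{\tilde\ep}=\tilde M|\log\tilde\ep|^m$. Using $\min_{\tilde\Sigma'}\tilde\rho^{\,2}=\min_{\tilde\Sigma}\pmin^2$, the identity $\log(1/\tilde\ep)=\log(1/\ep)-\log Q_\ep$, and $\log\tilde{\mathcal M}_{\tilde\ep}=\log\mathcal M_\ep+O(1)$, we obtain
\begin{equation*}
\een(u,\Sigma) \geq \pi|d|\min_{\tilde\Sigma}\pmin^2\left(\log\frac{1}{\ep} - \log(\mathcal M_\ep Q_\ep)\right),
\end{equation*}
after absorbing the $O(1)$ correction into $\log(\mathcal M_\ep Q_\ep)$ by slightly increasing $\ep_0$ (which is allowed since the prefactor $\pi|d|\min_{\tilde\Sigma}\pmin^2$ is uniformly bounded).

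The only delicate point is keeping track of the weighted energy under the rescaling: this is essentially automatic thanks to the conformal invariance of the Dirichlet integral in dimension two combined with the tuned choice $\tilde\ep=\ep Q_\ep$, together with the fact that $\pmin$ is pulled back as a scalar so that its minimum over $\tilde\Sigma$ is preserved. No new geometric or analytic input beyond Proposition~\ref{ball} is required.
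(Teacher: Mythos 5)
Your proposal follows the same strategy as the paper: rescale the ambient surface by $Q_\ep$, set $\tilde\ep\colonequals\ep Q_\ep$, check that the weighted energy density, the $1$-neighborhood hypothesis, and the winding number transform correctly under this scaling, and then invoke Proposition~\ref{ball}. The change-of-variables identities (conformal invariance of the Dirichlet term, the forced choice $\tilde\ep=\ep Q_\ep$ for the potential term, the scaling of distances) are all handled correctly, and the observation that $\min_{\tilde\Sigma'}\tilde\rho^2=\min_{\tilde\Sigma}\pmin^2$ is the right one.

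The one step that does not hold as written is the final ``absorption.'' After enlarging $M$ to some $\tilde M>M$ so that the rescaled energy bound fits the hypothesis of Proposition~\ref{ball}, you obtain the lower bound with $\log\tilde{\mathcal M}_{\tilde\ep}$, and
\[
\log\tilde{\mathcal M}_{\tilde\ep}-\log\mathcal M_\ep=\log\frac{\tilde M}{M}+m\log\frac{|\log\tilde\ep|}{|\log\ep|}\ \longrightarrow\ \log\frac{\tilde M}{M}>0
\]
as $\ep\to 0$. This positive deficit does not go away by shrinking $\ep_0$ (and the parenthetical that the prefactor $\pi|d|\min_{\tilde\Sigma}\pmin^2$ is uniformly bounded is false, since $|d|$ is unconstrained), so your stated chain does not quite reach the claimed inequality. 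The paper sidesteps this by applying Proposition~\ref{ball} directly with the original bound $\mathcal M_\ep$ in the conclusion, i.e.\ reading that proposition --- as the ball-construction method actually yields --- as giving a lower bound of the form $\log(1/\ep)-\log\mathcal M$ for whatever constant $\mathcal M$ you use to bound the energy, and here $E_{\tilde\ep,\tilde\rho_\ep}(\tilde u_\ep,\Sigma_\ep)=\een(u_\ep,\Sigma)\leq\mathcal M_\ep$. With that reading there is no $O(1)$ deficit to absorb; the correct fix is to keep $\mathcal M_\ep$ as the energy-bound parameter throughout rather than replacing it by $\tilde M|\log\tilde\ep|^m$.
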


The proof of this result is a minor modification of the proof of \cite{roman-vortex-construction}*{Corollary 3.1}. We include it here for the convenience of the reader.
\begin{proof} 
	For $y\in \Sigma_\ep\colonequals Q_\ep \Sigma$, we define
	$$
	\tilde u_\ep(y)=u_\ep \left(\frac{y}{Q_\ep} \right)\quad \mbox{and}\quad  \tilde \rho_\ep(y)=\pmin\left(\frac{y}{Q_\ep}\right).
	$$
	We let $\tilde \Sigma_\ep\colonequals Q_\ep \tilde \Sigma$.
	Observe that, by a change of variables, we have
	$$
	\een(u_\ep,\Sigma)=E_{\tilde \ep,\tilde \rho_\ep}(\tilde u_\ep,\Sigma_\ep),
	$$
	where $\tilde \ep \colonequals \ep Q_\ep$. It is easy to check that the second fundamental form of $\tilde \Sigma_\ep$ is bounded by $1$. Then a direct application of Proposition \ref{ball} shows that
	\begin{multline*}
	\een(u_\ep,\Sigma)=E_{\tilde\ep,\tilde \rho_\ep}(\tilde u_\ep,\Sigma_\ep)\geq \pi |d|\min_{\tilde \Sigma_\ep}\tilde \rho_\ep^2 \left( \log \frac1{\tilde \ep} -\log \mathcal M_\ep \right)\\
	=\pi |d|\min_{\tilde\Sigma}\pmin^2\left( \log \frac1\ep -\log \mathcal M_\ep Q_\ep \right)
	\end{multline*}
	for any $0<\ep<\ep_1=\ep_0Q_\ep^{-1}$, where $\ep_0$ is the constant appearing in the proposition.
\end{proof}

Let us now focus on proving Proposition \ref{ball}.
In what follows, the sets $B(x,t)$ and $S(x,t)$ denote respectively a geodesic (closed) ball and a geodesic sphere of radius $t$ centered at $x \in \Tilde{\Sigma}$. 
We will also use $S_e(x,t)$ and $B_e(x,t)$ to denote, respectively, the Euclidean sphere and the Euclidean ball of radius $t$ centered at $x$ in the tangent space $T_x \Sigma$.

The following result is the main new ingredient of our ball construction. It is analogous to \cite{roman-vortex-construction}*{Lemma 3.3} and \cite{sandier-surface-ball-construction}*{Lemma 3.12} when $\pmin\equiv 1$.
\begin{lemma}
	Suppose $u,\Sigma$ satisfy the hypotheses of Proposition \ref{ball}. There exist $\ep_0, r_0, C > 0$ depending only on $\Sigma$ such that, for any $\ep < \ep_0$, any $x \in \Sigma$, and any $t>0$ such that $\frac{\ep}{\min_{B(x,t)} \pmin^2} < t< r_0$, if $|u| \leq 1$ in $S(x,t)$, we have
	\begin{equation*}
		\erho{S(x,t)} \geq \min_{\Tilde{\Sigma}}\pmin^2\left(\pi m^2 \left(\frac{|d|}{t}-C \right)^+ + \frac{1}{C\ep}(1-m)^2 \right),
	\end{equation*}
    where $m \colonequals \inf_{S(x,t)} |u|$, and $$d\colonequals \begin{cases}
        \deg\left(\dfrac{u}{|u|},S(x,t)\right) & \text{ if } m\neq 0 \\
        0 & \text{otherwise}.
    \end{cases}$$
\end{lemma}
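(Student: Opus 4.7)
My plan is to reduce the weighted statement to the corresponding homogeneous bound from \cite{roman-vortex-construction}*{Lemma 3.3} (which in turn follows the strategy of \cite{sandier-surface-ball-construction}*{Lemma 3.12}) by extracting the minimum of $\pmin^2$ over the local ball as a common prefactor and recognizing the remaining expression as a standard Ginzburg--Landau density at a suitably rescaled small parameter.

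Concretely, I would set $\underline{\rho}\colonequals \min_{B(x,t)}\pmin$ and define the effective parameter $\tilde\ep\colonequals \ep/\underline\rho$. Since $S(x,t)\subset B(x,t)$, pointwise on $S(x,t)$ one has $\pmin^2\geq\underline\rho^2$ and $\pmin^4\geq\underline\rho^4$, so factoring gives
\begin{equation*}
\eed(u)\ \geq\ \underline\rho^2\left(\tfrac12|\nabla u|^2+\tfrac{1}{4\tilde\ep^2}(1-|u|^2)^2\right),
\end{equation*}
and the bracketed quantity is exactly the homogeneous density at scale $\tilde\ep$. I would then verify that the hypothesis $\ep/\underline\rho^2<t$ forces $\tilde\ep<t$: indeed $\tilde\ep = (\ep/\underline\rho^2)\cdot\underline\rho < t\,\underline\rho\leq t$ using $\underline\rho\leq 1$. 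This is precisely what is needed to legitimately invoke the homogeneous lemma on $S(x,t)$ with the parameter $\tilde\ep$ in place of $\ep$, yielding
\begin{equation*}
\int_{S(x,t)}\left(\tfrac12|\nabla u|^2+\tfrac{1}{4\tilde\ep^2}(1-|u|^2)^2\right)\ \geq\ \pi m^2\left(\tfrac{|d|}{t}-C\right)^+ + \tfrac{1}{C\tilde\ep}(1-m)^2.
\end{equation*}

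Multiplying through by $\underline\rho^2$ and substituting back $\tilde\ep=\ep/\underline\rho$ in the potential term yields
\begin{equation*}
\erho{S(x,t)}\ \geq\ \underline\rho^2\,\pi m^2\left(\tfrac{|d|}{t}-C\right)^+ + \tfrac{\underline\rho^3}{C\ep}(1-m)^2.
\end{equation*}
To reach the form displayed in the statement, I would then invoke the global bounds $\underline\rho^2\geq\min_{\tilde\Sigma}\pmin^2$ and $\underline\rho\geq\sqrt b$ from \eqref{rho uniform bound}, writing $\underline\rho^3=\underline\rho^2\cdot\underline\rho\geq(\min_{\tilde\Sigma}\pmin^2)\sqrt b$ and absorbing the factor $\sqrt b$ into a renamed constant $C$.

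The main obstacle in designing this reduction is the mismatch between the $\pmin^2$ factor in the kinetic term and the $\pmin^4$ factor in the potential term: pulling out a common factor $\underline\rho^2$ necessarily leaves an extra $\underline\rho$ attached to the potential, which is exactly what forces the rescaled parameter $\tilde\ep$ and dictates the precise form of the hypothesis $\ep/\underline\rho^2<t$ (as opposed to the weaker $\ep<t$ sufficient in the homogeneous setting). The one surviving power of $\underline\rho$ appearing at the end is harmless thanks to the uniform pinning lower bound $\pmin\geq\sqrt b$ from \eqref{rho uniform bound}, which is what ultimately allows the final rewriting in terms of $\min_{\tilde\Sigma}\pmin^2$ up to a universal enlargement of the constant.
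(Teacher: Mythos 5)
Your proof is correct, but it takes a genuinely different route from the paper. The paper cites the weighted two-dimensional radial lemma \cite{diaz-roman-2d}*{Lemma A.1} for the modulus part of the energy, transfers it to the geodesic sphere $S(x,t)$ via the exponential map (a quasi-isometry for $t<r_0$), then splits $|\nabla u|^2 = |\nabla|u||^2 + |u|^2|\nabla(u/|u|)|^2$ and adds the degree estimate $\int_{S(x,t)}|\nabla(u/|u|)|^2 \geq \pi(|d|/t-C)^+$ from \cite{sandier-surface-ball-construction}*{Lemma 3.12}; crucially, the weighted 2D lemma delivers the $\min\pmin^2$ prefactor directly, without any rescaling. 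You instead factor $\underline\rho^2 = \min_{B(x,t)}\pmin^2$ out of the density, recognize the remainder as the homogeneous density at $\tilde\ep=\ep/\underline\rho$, verify that the hypothesis $\ep/\underline\rho^2 < t$ yields $\tilde\ep < t$, and invoke the homogeneous surface lemma \cite{roman-vortex-construction}*{Lemma 3.3} directly, thus bypassing both the separate 2D lemma and the exponential-map argument — this is shorter and makes the reduction to the homogeneous case explicit and transparent. The trade-off is the surviving extra power of $\underline\rho$ in the potential term, which you absorb via $\underline\rho\geq\sqrt b$: the resulting constant $C$ therefore depends on $b$ as well as on $\Sigma$, whereas the paper's constant depends only on $\Sigma$. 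Since $b$ is a fixed $\ep$-independent constant of the model, this is harmless for every subsequent use of the lemma (Proposition \ref{ball}, Corollary \ref{ball2}, and so on), but it is a slight weakening of the literal dependence claimed in the statement and is worth flagging if the route is adopted.
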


\begin{proof}
    This result is an extension of \cite{diaz-roman-2d}*{Lemma A.1}, which states that in $\Omega \subset \R^2$, for any $u \in H^1(\Omega)$ and any (Euclidean) ball $B_e(x,t) \subseteq \Omega$, we have
    \begin{equation}\label{euclidean ball estimate}
        \frac{1}{2}\int_{S_e(x,t)} \pmin^2 |\nabla |u||^2 + \frac{\pmin^4}{2\ep^2}(1-|u|^2)^2 \geq  \frac{\min_{B_e(x,t)} \pmin^2}{C\ep} (1-m)^2,
    \end{equation}
    where $C>0$ is a universal constant.
    
    \medskip
	We extrapolate this result to surfaces in $\R^3$ using the exponential map defined locally in $\Tilde{\Sigma}$. Fixing $u \in H^1(\Sigma,\C)$, $x \in \Sigma$, $t>0$ small enough, and using \eqref{euclidean ball estimate} with $\Omega = \exp_x^{-1}(\Sigma) \subseteq T_x \Sigma$, $\Tilde{u} \colonequals u \circ \exp_x$, and $\Tilde{\pmin} \colonequals \pmin \circ \exp_x$, yields
	\begin{equation}\label{lower bound on tansp}
		\frac{1}{2}\int_{S_e(x,t)} \Tilde{\pmin}^2 |\nabla |\Tilde{u}||^2 + \frac{\Tilde{\pmin}^4}{2\ep^2}(1-|\Tilde{u}|^2)^2 \geq  \frac{\min_{B_e(x,t)} \Tilde{\pmin}^2}{C\ep}(1-\Tilde{m})^2,
	\end{equation}
    where $\Tilde{m} \colonequals \inf_{S(x,t)}|\Tilde{u}|$.
	The bound on the second fundamental form of $\Tilde{\Sigma}$ implies that there exists $r_0>0$ depending only on $\Sigma$ such that, for $t<r_0$, $\exp_x$ is a quasi-isometry in $B_e(x,t)$ and $\exp_x(S_e(x,t))$ is a geodesic sphere in $\Sigma$ centered at $x$. By performing a change of variables on \eqref{lower bound on tansp}, we obtain
	\begin{equation}\label{lower bound on surface}
		\frac{1}{2}\int_{S(x,t)} \rho_\ep^2 |\nabla |u||^2 + \frac{\pmin^4}{2\ep^2}(1-|u|^2)^2 \geq \frac{\min_{B(x,t)} \rho_\ep^2}{C\ep}(1-m)^2 \geq  \frac{\min_{\Tilde{\Sigma}} \rho_\ep^2}{C\ep}(1-m)^2, 
	\end{equation}
	with potentially different values for the constants $r_0$ and $C$, after the change of coordinates. This yields the lower bound
	\begin{align}\label{geodesic sphere energy lower bound}
		\erho{S(x,t)}
		&= \frac{1}{2} \int_{S(x,t)} \rho_\ep^2\left(|\nabla |u||^2 + |u|^2 \left|\nabla \frac{u}{|u|} \right|^2 \right) + \frac{\pmin^4}{2\ep^2}(1-|u|^2)^2 \notag\\
		&\stackrel{\eqref{lower bound on surface}}{\geq} \min_{\Tilde{\Sigma}} \pmin^2\left(\frac{1}{C \ep}(1-m)^2 + m^2 \int_{S(x,t)} \left|\nabla \frac{u}{|u|}\right|^2 \right).
	\end{align}
    For the second term in the RHS of \eqref{geodesic sphere energy lower bound}, we use the following result used in the proof of \cite{sandier-surface-ball-construction}*{Lemma 3.12}, which states that for small enough $t$,
	\begin{equation}\label{energy on sphere degree lower bound}
		\int_{S(x,t)} \left|\nabla \frac{u}{|u|}\right|^2 \geq \pi \left(\frac{|d|}{t}- C \right)^+.
	\end{equation}
	By inserting \eqref{energy on sphere degree lower bound} into \eqref{geodesic sphere energy lower bound}, we complete the proof.
\end{proof}
With this modification at hand, the proof of Proposition~\ref{ball} follows almost verbatim \cite{roman-vortex-construction}*{Section 3.1}, the only difference being that the lower bound $\min_{\Tilde{\Sigma}} \pmin^2$ is carried through the subsequent steps.

\subsection{2D vorticity estimate}
Let $\omega$ be a two dimensional domain. For a given function $u:\overline \omega\to \C$ and a given vector field $A:\overline \omega\to \R^2$ we let
$$
\fen(u,A,\omega)= \int_\omega \eed(u,A),\quad \fen(u,A,\partial \omega)=\int_{\partial \omega}\eed(u,A)d\H^1. 
$$
We also denote $F(u,A,\omega)=F_{\ep,1}(u,A,\omega)$ and $F(u,A,\partial \omega)=F_{\ep,1}(u,A,\partial \omega)$. Analog to \eqref{crucialcomparisonenergy}, we have
$$
c^4 F_\ep(u,A,\omega)\leq  \fen(u,A,\omega)\leq F_\ep(u,A,\omega)
$$
and
$$
c^4 F_\ep(u,A,\partial \omega)\leq \fen(u,A,\partial\omega)\leq F_\ep(u,A,\partial\omega).
$$
Hence, by combining these inequalities with \cite{roman-vortex-construction}*{Theorem 4.1}, we get the following 2D vorticity estimate.
\begin{prop}\label{Teo:2dEstimate} Let $\omega\subset \R^2$ be a bounded domain with Lipschitz boundary. Let $u:\omega \rightarrow \C$ 
	and $A:\omega\rightarrow \R^2$ be $C^1(\overline \omega)$ and such that $|u|\geq 5/8$ on $\partial \omega$. 
	Let $\{U_j\}_{j\in J}$ be the collection of connected components of $\{|1-|u(x)||\geq 1/2\}$ and $\{S_i\}_{i\in I}$ denote the collection of connected components of $\{|u(x)|\leq 1/2\}$ whose degree $d_i=\mathrm{deg}(u/|u|,\partial S_i)\neq 0$.
	Then, letting $r=\sum_{j\in J} \mathrm{diam}(U_j)$ and assuming $\ep,r\leq 1$, we have
	\begin{equation*}
		\left\| \mu(u,A) -2\pi \sum_{i\in I}d_i\delta_{a_i} \right\|_{C^{0,1}(\omega)^*}\leq C\max(\ep,r)(1+\fen(u,A,\omega)+\fen(u,A,\partial\omega)),
	\end{equation*}
	where $a_i$ is the centroid of $S_i$ and $C$ is a universal constant.
\end{prop}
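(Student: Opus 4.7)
The plan is to reduce the claim directly to the homogeneous 2D vorticity estimate from \cite{roman-vortex-construction}*{Theorem 4.1}. The first observation is that every ingredient appearing on the left-hand side of the desired inequality---namely the vorticity $\mu(u,A)$, the connected components $\{U_j\}_{j\in J}$ and $\{S_i\}_{i\in I}$, their centroids $a_i$, and their degrees $d_i$---is defined purely in terms of $u$ and $A$ and does not involve the weight $\pmin$ at all. The hypothesis $|u|\geq 5/8$ on $\partial\omega$ is likewise unchanged from the homogeneous setting. Thus, feeding $(u,A)$ into Theorem 4.1 of \cite{roman-vortex-construction} yields exactly the same structural statement but with $F_\ep(u,A,\omega)$ and $F_\ep(u,A,\partial\omega)$ in place of $\fen(u,A,\omega)$ and $\fen(u,A,\partial\omega)$ on the right-hand side.

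The second step is simply to exchange $F_\ep$ for $\fen$ by means of the pointwise comparison
\begin{equation*}
c^4 F_\ep(u,A,\omega)\leq \fen(u,A,\omega)\leq F_\ep(u,A,\omega),
\end{equation*}
recalled just above the statement, and its analog on $\partial\omega$. Both follow at once from $\pmin\in[\sqrt{b},1]$ together with the explicit forms of the integrands: the coefficient in front of $|\nabla_A u|^2$ goes from $1$ to $\pmin^2\geq c^2$, the coefficient in front of $(1-|u|^2)^2/\ep^2$ goes from $1$ to $\pmin^4\geq c^4$, and $|\curl A|^2$ is untouched. Using $F_\ep\leq c^{-4}\fen$ (and the trivial inequality $c^{-4}\geq 1$ to handle the additive $1$ in the prefactor), the right-hand side bound transforms into $Cc^{-4}\max(\ep,r)(1+\fen(u,A,\omega)+\fen(u,A,\partial\omega))$, and absorbing $c^{-4}$ into the universal constant finishes the proof.

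There is no real obstacle: the substantive analytical content---the 2D ball construction, the Jacobian approximation by Dirac masses, and the $C^{0,1}(\omega)^*$ duality estimate---is already packaged inside \cite{roman-vortex-construction}*{Theorem 4.1}. The only additional input is the trivial two-sided comparison between the homogeneous and weighted energies, which is lossless up to a multiplicative constant because $\pmin$ is bounded above and below by positive constants independent of $\ep$. Consequently, the proposition is a direct corollary, and the brief indication in the paragraph preceding its statement is essentially all the argument required.
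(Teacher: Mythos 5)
Your proof is correct and takes exactly the same route as the paper: the paper likewise records the two-sided comparison $c^4 F_\ep \leq \fen \leq F_\ep$ (and its boundary analog) just above the statement and then invokes Theorem~4.1 of \cite{roman-vortex-construction} verbatim, absorbing the weight-dependent factor into the constant. Nothing further is needed, and your note about using $c^{-4}\geq 1$ to handle the additive $1$ is the only (trivial) detail the paper leaves implicit.
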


Given a two dimensional Lipschitz domain $\omega \subset \Omega$, we let $(s,t,0)$ denote coordinates in $\R^3$ such that $\omega\subset \{(s,t,0) \in \Omega\}$. We define $\mu_\ep\colonequals \mu_\ep(u,A)[\partial_s,\partial_t]$, and write $\mu_{\ep,\omega}$ its restriction to $\omega$. From the previous proposition we immediately get the following result, which is analogous to \cite{roman-vortex-construction}*{Corollary 4.1}.
\begin{corollary}\label{cor:2dVortEstimate}
	Let $\gamma\in(0,1)$ and assume that $(u,A)\in H^1(\Omega,\C)\times H^1(\Omega,\R^3)$ is a configuration such that $\fen(u,A)\leq \ep^{-\gamma}$, so that by Lemma \ref{Lemma:Grid} there exists a grid $\GG(b_\ep,\de)$ satisfying \eqref{prop1Grid}. Then there exists $\ep_0(\gamma)$ such that, for any $\ep<\ep_0$ and for any face $\omega\subset \RRR_2(\GG(b_\ep,\de))$ of a cube of the grid $\GG(b_\ep,\de)$, letting $\{U_{j,\omega}\}_{j\in J_\omega}$ be the collection of connected components of $\{x\in\omega \ | \ |1-|u(x)||\geq 1/2\}$ and
	$\{S_{i,\omega}\}_{i\in I_\omega}$ denote the collection of connected components of $\{x\in\omega \ | \ |u_\ep(x)|\leq 1/2\}$ whose degree $d_{i,\omega}\colonequals \mathrm{deg}(u/|u|,\partial S_{i,\omega})\neq 0$, we have
	\begin{multline*}
		\left\| \mu_{\ep,\omega} -2\pi \sum_{i\in I_\omega}d_{i,\omega}\delta_{a_{i,\omega}} \right\|_{C^{0,1}(\omega)^*}\leq \\ C\max(r_\omega,\ep)\left(1+\int_\omega \eed(u,A)d\H^2+\int_{\partial \omega} \eed(u,A)d\H^1\right),
	\end{multline*}
	where $a_{i,\omega}$ is the centroid of $S_{i,\omega}$, $r_\omega\colonequals \sum_{j\in J_\omega}\mathrm{diam}(U_{j,\omega})$, and $C$ is a universal constant.
\end{corollary}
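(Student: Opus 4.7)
The plan is a direct application of Proposition~\ref{Teo:2dEstimate} to the restriction of $(u,A)$ to each face $\omega$ of the grid $\GG(b_\ep,\de)$ produced by Lemma~\ref{Lemma:Grid}, following the scheme of \cite{roman-vortex-construction}*{Corollary 4.1} and exploiting the comparison \eqref{crucialcomparisonenergy}.

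First, from the hypothesis $\fen(u,A)\le \ep^{-\gamma}$, Lemma~\ref{Lemma:Grid} provides, for $\ep$ sufficiently small (depending only on $\gamma$), a grid $\GG(b_\ep,\de)$ whose $1$-skeleton satisfies \eqref{prop1Grid}, i.e.\ $|u|>5/8$ on $\RRR_1(\GG(b_\ep,\de))\cap\Omega$. Since $\partial\omega\subset\RRR_1(\GG(b_\ep,\de))\cap\Omega$ for every face $\omega\subset\RRR_2(\GG(b_\ep,\de))$ of a cube of the grid, the boundary hypothesis $|u|\ge 5/8$ on $\partial\omega$ required by Proposition~\ref{Teo:2dEstimate} is automatically met.

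Second, I would identify the face $\omega$ with a square in $\R^2$ via the coordinates $(s,t,0)$ and denote by $A_\omega\colon\omega\to\R^2$ the projection of $A$ onto the tangent plane spanned by $\partial_s,\partial_t$. By the definition of the gauge-invariant Jacobian, the pairing $\mu(u,A)[\partial_s,\partial_t]$ depends only on $u|_\omega$ and $A_\omega$ and coincides with the planar vorticity of $(u|_\omega,A_\omega)$; hence $\mu_{\ep,\omega}$ agrees with the 2D vorticity to which Proposition~\ref{Teo:2dEstimate} applies. In the same way, $\fen(u|_\omega,A_\omega,\omega)\le\int_\omega \eed(u,A)\,d\H^2$ and $\fen(u|_\omega,A_\omega,\partial\omega)\le\int_{\partial\omega}\eed(u,A)\,d\H^1$, since dropping the components of $\nabla_A u$ and $\curl A$ normal to $\omega$ can only decrease the corresponding energy densities.

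Third, although Proposition~\ref{Teo:2dEstimate} is stated for $C^1(\overline\omega)$ configurations, the underlying 2D estimate extends to $H^1$ configurations whose trace on $\partial\omega$ satisfies the modulus bound, via standard density arguments (and by the same regularity-free formulation used in \cite{roman-vortex-construction}); this is precisely our situation. Applying Proposition~\ref{Teo:2dEstimate} to $(u|_\omega,A_\omega)$ then yields the asserted bound, with $\{U_{j,\omega}\}$, $\{S_{i,\omega}\}$, $d_{i,\omega}$, $a_{i,\omega}$, and $r_\omega$ corresponding exactly to $\{U_j\}$, $\{S_i\}$, $d_i$, $a_i$, and $r$ in the 2D statement. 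The only (quite minor) technical point is the identification of $\mu_{\ep,\omega}$ with the intrinsic planar vorticity on $\omega$ and the $H^1$-to-$C^1$ reduction; neither is a genuine obstacle, and the corollary then follows as stated.
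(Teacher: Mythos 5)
Your proposal is correct and follows the same route as the paper, which simply deduces the corollary "immediately" from Proposition~\ref{Teo:2dEstimate}: restrict $(u,A)$ to each face $\omega$, note that \eqref{prop1Grid} supplies the boundary hypothesis $|u|\geq 5/8$ on $\partial\omega$, and observe that discarding the normal components of $\nabla_A u$ and $\curl A$ only decreases the energy densities. Your handling of the $H^1$-to-$C^1$ reduction by density and the identification of $\mu_{\ep,\omega}$ with the planar vorticity are exactly the implicit ingredients the paper is invoking.
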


\subsection{Construction of the vorticity approximation\label{sec:constrvortapprox}}
The construction of the vorticity approximation given in \cite{roman-vortex-construction}*{Section 5} is exactly the same we need here. What changes when using $\fen(u_\ep,A_\ep)$ instead of $F(u_\ep,A_\ep)$ is the location of the set $\left\{|u_\ep|\leq 1/2  \right\}$, which is where the vortex lines are located. This is detected through the new version of the vorticity estimate given in Corollary \ref{cor:2dVortEstimate}. Even though our intention is not to repeat the whole construction here, let us next mention a few  of the main ingredients for the convenience of the reader. 

Let $\gamma\in (0,1)$ and a pair $(u,A)\in H^1(\Omega,\C)\times H^1(\Omega,\R^3)$ such that $\fen(u,A)\leq \ep^{-\gamma}$. We can then apply Lemma \ref{Lemma:Grid}, which provides a grid of cubes $\GG(b_\ep,\de)$ satisfying \eqref{propGrid}. On the face $\omega$ of each cube $\CC_l$ of the grid, Corollary \ref{cor:2dVortEstimate} provides the existence of points $a_{i,\omega}$ and integers $d_{i,\omega}\neq 0$ such that
\begin{equation}\label{apmu}
	\mu_{\ep,\omega}\approx 2\pi \sum_{i\in I_\omega} d_{i,\omega}\delta_{a_{i,\omega}}.
\end{equation}
Our vorticity approximation in $\CC_l$, which we denote $\nu_{\ep,\CC_l}$, is then constructed in such a way that its restriction to the face $\omega$ of any cube of the grid coincides with the right-hand side of \eqref{apmu}. A crucial observation is that since $\partial\mu(u,A)=0$ relative to any cube of the grid $\CC_l$, we have
$$
\sum_{\omega\subset \partial \CC_l} \sum_{i\in I_\omega} d_{i,\omega}=0,
$$
which allows to define the vorticity approximation in any cube of the grid as the minimal connection, as first introduced in \cite{BreCorLie}, associated to the collection of positive and negative points (according to their degree) where the vortices are located on the boundary of the cube. 

Analogously, since $\partial \mu (u,A)=0$ relative to $\partial \GG$, we have 
$$
\sum_{\omega\subset \partial \GG} \sum_{i\in I_\omega} d_{i,\omega}=0,
$$
which allows to define the vorticity approximation in $\Theta$ (recall \eqref{unioncubes}), which we denote $\nu_{\ep,\Theta}$, as the minimal connection through the boundary $\partial\Omega$ associated to the collection of positive and negative points where the vortices are located on $\partial\GG$. 

It is important to remark that by using \eqref{crucialcomparisonenergy} we directly obtain a control of the size of the set where $\nu_\ep$ is supported. In fact, by combining \eqref{crucialcomparisonenergy} with \cite{roman-vortex-construction}*{Lemma 5.5}, we obtain the following result.

\begin{lemma}\label{Lemma:support}
	Let $\gamma\in(0,1)$ and assume that $(u_\ep,A_\ep)\in H^1(\Omega,\C)\times H^1(\Omega,\R^3)$ is a configuration such that $\fen(u_\ep,A_\ep)\leq \ep^{-\gamma}$, so that, by Lemma \ref{Lemma:Grid}, there exists a grid $\GG(b_\ep,\de)$ satisfying \eqref{propGrid}.
	For each face $\omega\subset \RRR_2(\GG(b_\ep,\de))$ of a cube of the grid, let $|I_\omega|$ be the number of connected components of $\{x\in \omega \ | \ |u_\ep(x)|\leq 1/2 \}$ whose degree is different from zero. Then, letting
	\begin{equation}\label{CubesUsed}
		\GG_0\colonequals \{ \CC_l\in \GG \ | \ \textstyle \sum_{\omega\subset\partial \CC_l}|I_\omega| > 0\},
	\end{equation}
	we have
	$$
	\mathrm{supp}(\nu_\ep)\subset S_{\nu_\ep}\colonequals
	\bigcup_{\CC_l\in \GG_0} \CC_l \cup \left\{\begin{array}{cl}\overline\Theta&\mathrm{if}\ \sum_{\omega \subset \partial\GG}|I_\omega|>0\\ \emptyset&\mathrm{if}\ \sum_{\omega \subset \partial\GG}|I_\omega|=0\end{array} \right. .
	$$
	Moreover
	$$
	|S_{\nu_\ep}|\leq C\de (1+\de \fen(u_\ep,A_\ep)),
	$$
	where $C$ is a constant depending only on $\partial\Omega$.
\end{lemma}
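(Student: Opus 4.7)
The plan is to reduce the statement to its homogeneous counterpart, Lemma~5.5 of \cite{roman-vortex-construction}, by exploiting the fact that the construction of $\nu_\ep$ is combinatorial together with the energy equivalence \eqref{crucialcomparisonenergy}. The key observation is that, once the vortex data $\{a_{i,\omega},d_{i,\omega}\}$ are produced on each face $\omega$ by Corollary~\ref{cor:2dVortEstimate}, both the support $S_{\nu_\ep}$ and the current $\nu_\ep$ inside each cube (and inside $\Theta$) are assembled from cubes of the grid and from minimal connections attached to these points, without any further reference to the weight $\pmin$.

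For the inclusion $\mathrm{supp}(\nu_\ep)\subset S_{\nu_\ep}$, I would read it directly off the construction recalled in Section~\ref{sec:constrvortapprox}. In a cube $\CC_l$, the current $\nu_{\ep,\CC_l}$ is a minimal connection between the signed vortex points lying on the faces $\omega\subset\partial\CC_l$; if $\CC_l\notin\GG_0$ then $\sum_{\omega\subset\partial\CC_l}|I_\omega|=0$, so the collection of endpoints is empty and $\nu_{\ep,\CC_l}\equiv 0$. The same dichotomy, applied to the global boundary $\partial\GG$ and the associated minimal connection through $\partial\Omega$ defining $\nu_{\ep,\Theta}$, takes care of the contribution from $\overline\Theta$, yielding the stated inclusion.

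For the measure bound, I would invoke the homogeneous estimate
\[
|S_{\nu_\ep}|\leq C\de\bigl(1+\de F_\ep(u_\ep,A_\ep)\bigr)
\]
furnished by Lemma~5.5 of \cite{roman-vortex-construction}, whose proof only requires (i) the volume count $|\CC_l|=\de^3$ times the cardinality of $\GG_0$, (ii) the bound $|\Theta|\leq C\de$ coming from $\partial\Omega$, and (iii) counting the faces with $|I_\omega|>0$ via the 2D vorticity estimate on $\RRR_2$. None of these ingredients depends on the presence of the weight. Substituting the comparison $F_\ep(u_\ep,A_\ep)\leq c^{-4}\fen(u_\ep,A_\ep)$ from \eqref{crucialcomparisonenergy} (with $c=\sqrt b$) into the above bound and absorbing $c^{-4}$ into the constant yields the advertised estimate.

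There is no genuine obstacle here: the construction of $\nu_\ep$ and Lemma~5.5 of \cite{roman-vortex-construction} are available off the shelf, and the entire adaptation reduces to noting that the sole quantitative input — the free-energy bound — transfers from $F_\ep$ to $\fen$ through \eqref{crucialcomparisonenergy}.
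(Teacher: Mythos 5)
Your proposal is correct and takes essentially the same route as the paper: the paper likewise justifies the lemma in one line by combining \eqref{crucialcomparisonenergy} with Lemma~5.5 of \cite{roman-vortex-construction}, relying on the fact that the construction of $\nu_\ep$ is purely combinatorial once the vortex points on the faces of the grid are identified, and that the only quantitative input transfers from $F_\ep$ to $\fen$ via the two-sided energy comparison.
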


\subsection{Lower bound for \texorpdfstring{$E_\ep(u_\ep)$}{E(u)} far from the boundary}
We now proceed to provide a lower bound for the energy without magnetic field $\een(u_\ep)$ in the union of cubes of the grid $\GG(b_\ep,\de)$. The proof is based on slicing according to the level sets of the smooth approximation of the function $\zeta$ constructed in \cite{roman-vortex-construction}*{Appendix A}. The function $\zeta$ that we use here is exactly the same considered in \cite{roman-vortex-construction}. It can be seen as a calibration of the minimal connection used when constructing the vorticity approximation. Its smooth approximation, coupled with quantitative estimates on the second fundamental form of its level sets, is one of the most technical parts of \cite{roman-vortex-construction}. 

The aforementioned slicing procedure is coupled with the ball construction method on a surface of Section \ref{subsec:ballmethodsurf}, where the quantitative estimate on the second fundamental form of the smooth approximation of the calibration function plays a crucial role. 

The following result is analogous to \cite{roman-vortex-construction}*{Proposition 6.1}.
\begin{prop}\label{propcubes}
	Let $m,M>0$ and assume that $(u_\ep,A_\ep)\in H^1(\Omega,\C)\times H^1(\Omega,\R^3)$ is such that $\fen(u_\ep,A_\ep)=M_\ep\leq M|\log \ep|^m$. For any $b,q>0$, there exists $\ep_0>0$ depending only on $b,q,m$, and $M$, such that, for any $\ep<\ep_0$, letting $\GG(b_\ep,\de)$ denote the grid given by Lemma \ref{Lemma:Grid} with $\de=\de(\ep)=|\log \ep|^{-q}$, if ---recall \eqref{CubesUsed}---
	\begin{equation*}
		\een(u_\ep,\cup_{\CC_l\in \GG_0} \CC_l)\leq KM_\ep\quad\mathrm{and}\quad \sum_{\CC_l\in\GG_0}\int_{\partial\CC_l} \eed(u_\ep)d\H^2\leq K\de^{-1}M_\ep
	\end{equation*}
	for some universal constant $K$, then
	$$
	\een(u_\ep,\cup_{\CC_l\in \GG_0} \CC_l)\geq \frac12\sum_{\CC_l\in \GG_0}\min_{\CC_l}\pmin^2|\nu_{\ep,\CC_l}|\left(\log \frac1\ep-\log C\frac{M_\ep^{56}|\log \ep|^{7(1+b)}}{\de^{55}}\right)-\frac C{|\log \ep |^b},
	$$
	where $C$ is a universal constant.
\end{prop}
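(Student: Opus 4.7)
The plan is to mirror the proof of \cite{roman-vortex-construction}*{Proposition 6.1}, replacing the homogeneous free energy $F_\varepsilon(u,A)$ by $\fen(u,A)$ wherever it appears, and leveraging the comparison~\eqref{crucialcomparisonenergy} so that all purely topological/geometric steps (grid selection, support localization, minimal connection construction, calibration $\zeta$ and its smooth approximation $\zeta_\varepsilon$ with controlled second fundamental form of level sets) carry over verbatim. The genuinely new ingredient is the weighted ball construction on a surface, Corollary~\ref{ball2}: this is what injects the factor $\min_{\mathcal{C}_l}\rho_\varepsilon^2$ into the lower bound.

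More precisely, for each cube $\mathcal{C}_l\in\mathcal{G}_0$ I would fix the calibration function $\zeta$ from \cite{roman-vortex-construction}*{Appendix A} associated to $\nu_{\varepsilon,\mathcal{C}_l}$, smooth it into $\zeta_\varepsilon$ whose level sets $\Sigma_t\colonequals\{\zeta_\varepsilon=t\}\cap\mathcal{C}_l$ are surfaces with second fundamental form bounded by $Q_\varepsilon=Q|\log\varepsilon|^{q_1}\delta^{-\alpha_1}$ for appropriate (explicit) exponents $q_1,\alpha_1$ inherited from the appendix. The coarea formula gives
\begin{equation*}
\int_{\mathcal{C}_l}\eed(u_\varepsilon)\,dx \;\geq\; \frac{1}{\|\nabla\zeta_\varepsilon\|_\infty}\int_{\R}\int_{\Sigma_t}\eed(u_\varepsilon|_{\Sigma_t})\,d\mathcal{H}^2\,dt,
\end{equation*}
since the 3D energy density controls its 2D tangential counterpart on each slice. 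A Chebyshev/averaging argument over $t$ in a set of measure of order $1$ isolates a family of good levels where: (a) the boundary energy $\int_{\partial\Sigma_t}\eed(u_\varepsilon)\,d\mathcal{H}^1$ is controlled; (b) $|u_\varepsilon|\geq 1/2$ near $\partial\Sigma_t$; (c) the winding number of $u_\varepsilon$ around $\Sigma_t$ equals $|\nu_{\varepsilon,\mathcal{C}_l}|/\pi$ (the calibration $\zeta$ is designed precisely so that the degree across any of its level surfaces counts the length of the minimal connection). On such a good $\Sigma_t$, Corollary~\ref{ball2} yields
\begin{equation*}
\int_{\Sigma_t}\eed(u_\varepsilon|_{\Sigma_t}) \;\geq\; \pi|d_t|\,\min_{\tilde\Sigma_t}\rho_\varepsilon^2\Bigl(\log\tfrac{1}{\varepsilon}-\log \mathcal M_\varepsilon Q_\varepsilon\Bigr),
\end{equation*}
and since $\Sigma_t\subset\mathcal{C}_l$ we bound $\min_{\tilde\Sigma_t}\rho_\varepsilon^2\geq\min_{\mathcal{C}_l}\rho_\varepsilon^2$. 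Integrating over the good set of levels (which has measure bounded below by a constant) and summing over $\mathcal{C}_l\in\mathcal{G}_0$ produces the claimed bound, with the factor $1/2$ absorbing the loss from restricting to good levels, and the exponents $56,\,55,\,7(1+b)$ emerging from tracking $\mathcal M_\varepsilon$, $Q_\varepsilon$, $\|\nabla\zeta_\varepsilon\|_\infty$ and the averaging error $C/|\log\varepsilon|^b$ exactly as in \cite{roman-vortex-construction}.

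The main obstacle is a bookkeeping one rather than a conceptual one: one must verify that at every step where a ball-per-ball or surface-per-surface lower bound was previously used, the replacement $\min_{\mathcal{C}_l}\rho_\varepsilon^2$ can be factored out cleanly. This works because the weight $\rho_\varepsilon^2$ appears with power one in $\tfrac{1}{2}\rho_\varepsilon^2|\nabla u|^2$ and power two in $\tfrac{\rho_\varepsilon^4}{4\varepsilon^2}(1-|u|^2)^2$, and, as noted in Section~\ref{subsec:ballmethodsurf}, the ball construction on a surface is carried out globally on $\mathcal{C}_l$ rather than ball by ball; the uniform bounds $b\leq\rho_\varepsilon^2\leq 1$ from \eqref{rho uniform bound} then let us extract $\min_{\mathcal{C}_l}\rho_\varepsilon^2$ uniformly. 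All the remaining estimates---in particular Lemma~\ref{Lemma:Grid}, Corollary~\ref{cor:2dVortEstimate}, and Lemma~\ref{Lemma:support}---have already been adapted to $\fen$ via \eqref{crucialcomparisonenergy}, so no further modification is needed beyond keeping track of the per-cube weight factor in the final summation.
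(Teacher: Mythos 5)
Your proposal is correct and follows essentially the same route as the paper: mirror the homogeneous 3D proof of \cite{roman-vortex-construction}*{Proposition 6.1}, replace the ball construction on a surface by its weighted version (Corollary~\ref{ball2}), and factor out $\min_{\CC_l}\pmin^2$ from the slice-by-slice lower bound since each level set of the smoothed calibration is contained in the cube. The paper itself only gives this as a sketch; your account adds the coarea/averaging details, which are exactly the ones inherited from the homogeneous construction, and correctly identifies \eqref{crucialcomparisonenergy} and \eqref{rho uniform bound} as the comparison tools that make the bookkeeping go through.
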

By replacing \cite{roman-vortex-construction}*{Corollary 3.1} by Corollary \ref{ball2}, the proof of this result is a minor modification of the proof of \cite{roman-vortex-construction}*{Proposition 6.1}. On each cube, one integrates over slices $\Sigma$ of the smooth approximation of the calibration function, which makes appear the factor $\min_{\Sigma}\pmin^2$ on the lower bound. Since the level sets are contained in the cube, we have $\min_{\Sigma}\pmin^2\geq \min_{\CC_l} \pmin^2$, which can then be factored out, allowing the proof to continue exactly as in \cite{roman-vortex-construction}.

\subsection{Lower bound for \texorpdfstring{$E_\ep(u_\ep)$}{E(u)} close to the boundary}
We now proceed to provide a lower bound for the energy without magnetic field $\een(u_\ep)$ in $\Theta$ (recall \eqref{unioncubes}). The proof is based on slicing according to the level sets of the smooth approximation of the function $\zeta$ constructed in \cite{roman-vortex-construction}*{Appendix B}. The function $\zeta$ that we use here is exactly the same considered in \cite{roman-vortex-construction}. It can be seen as a calibration of the minimal connection through the boundary $\partial\Omega$ used when constructing the vorticity approximation. Once again, the slicing procedure is coupled with the ball construction method on a surface of Section \ref{subsec:ballmethodsurf}, where the quantitative estimate on the second fundamental form of the smooth approximation of the calibration function plays a crucial role. 

The following result is analogous to \cite{roman-vortex-construction}*{Proposition 7.1}.

\begin{prop}\label{propboundary}
	Suppose that $\partial\Omega$ is of class $C^2$.
	Let $m,M>0$ and assume that $(u_\ep,A_\ep)\in H^1(\Omega,\C)\times H^1(\Omega,\R^3)$ is such that $\fen(u_\ep,A_\ep)=M_\ep\leq M|\log \ep|^m$. For any $b,q>0$, there exists $\ep_0>0$ depending only on $b,q,m,M$, and $\partial\Omega$, such that, for any $\ep<\ep_0$, letting $\GG(b_\ep,\de)$ denote the grid given by Lemma \ref{Lemma:Grid} with $\de=\de(\ep)=|\log \ep|^{-q}$, if 
	\begin{equation*}
		\een(u_\ep,\Theta)\leq KM_\ep\quad \mathrm{and}\quad \int_{\partial\GG}\eed(u_\ep)d\H^2\leq K\de^{-1}M_\ep
	\end{equation*}
	for some universal constant $K>0$, then 
	$$
	\een(u_\ep,\Theta)\geq \frac12\min_{\overline \Theta}\pmin^2| \nu_{\ep,\Theta}|\left(\log \frac1\ep-\log C\frac{M_\ep^{124}|\log \ep|^{(20+1/3)(1+b)}}{\de^{123}}\right)-\frac{C}{|\log \ep |^{b}},
	$$
	where $C$ is a constant depending only on $\partial\Omega$.
\end{prop}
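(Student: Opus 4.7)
The plan is to follow the same strategy as in Proposition~\ref{propcubes}, replacing the calibration function of \cite{roman-vortex-construction}*{Appendix A} (which calibrates the minimal connection inside each cube) by the one built in \cite{roman-vortex-construction}*{Appendix B}, which calibrates the minimal connection through $\partial \Omega$ used to construct $\nu_{\ep,\Theta}$. The function $\zeta$ and its smooth approximation depend only on $\partial\Omega$ and on the positions/signs of the points where the vortex lines cross $\partial \GG$, so they can be imported verbatim from the homogeneous setting. In particular, the level sets $\Sigma_s = \{\zeta_\ep = s\}$ can be arranged to lie in $\overline\Theta$, to be complete surfaces (after extension) with second fundamental form bounded by $Q_\ep = Q|\log\ep|^q$, where $Q$ depends only on $\partial\Omega$.

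First I would apply the coarea formula to $\een(u_\ep,\Theta)$ against the smoothed calibration $\zeta_\ep$, writing
\begin{equation*}
\een(u_\ep,\Theta) \ \gtrsim\ \int_{s_\ep^-}^{s_\ep^+} \een(u_\ep,\Sigma_s)\,ds,
\end{equation*}
where the range of $s$ is chosen via a pigeonhole/Fubini argument so as to (i) discard a set of ``bad'' parameter values on which either the trace energy $\int_{\partial \GG} \eed(u_\ep)\,d\H^2$ or the boundary $\partial \Sigma_s \subset \partial \GG$ concentrates too much energy, and (ii) ensure $|u_\ep|\geq 5/8$ on a $Q_\ep^{-1}$-neighborhood of $\partial \Sigma_s$ in $\Sigma_s$. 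The hypotheses $\een(u_\ep,\Theta)\leq KM_\ep$ and $\int_{\partial \GG} \eed(u_\ep)\,d\H^2 \leq K\de^{-1} M_\ep$ together with the bound on $Q_\ep$ ensure that the measure of discarded parameters is controlled, exactly as in \cite{roman-vortex-construction}*{Proposition 7.1}.

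Then, on each good slice $\Sigma_s$ I would apply Corollary~\ref{ball2} to $u_\ep\big|_{\Sigma_s}$, with $\widetilde\Sigma$ the complete surface extending $\Sigma_s$. Denoting $d_s$ the degree of $u_\ep/|u_\ep|$ on $\partial \Sigma_s$, this yields
\begin{equation*}
\een(u_\ep,\Sigma_s)\ \geq\ \pi|d_s|\min_{\widetilde\Sigma}\pmin^2\left(\log\tfrac1\ep - \log \mathcal M_\ep Q_\ep\right).
\end{equation*}
The key new observation (already used in Proposition~\ref{propcubes}) is that $\Sigma_s\subset \overline\Theta$, so $\min_{\widetilde\Sigma}\pmin^2 \geq \min_{\overline\Theta}\pmin^2$ and this factor can be pulled out of the slicing integral as a uniform constant.

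Finally, integrating in $s$ and using the calibration property of $\zeta$, namely that $\int |d_s|\,ds$ reproduces the mass of the minimal connection, i.e.\ $|\nu_{\ep,\Theta}|$ up to an error absorbed in the discarded slices, yields the claimed lower bound. The explicit powers $\de^{-123}$, $M_\ep^{124}$, $|\log\ep|^{(20+1/3)(1+b)}$ arise exactly as in \cite{roman-vortex-construction}*{Proposition 7.1}: they come from the regularization scale of $\zeta$ and the bound on $Q_\ep$, both of which depend only on $\partial\Omega$ and not on $\pmin$. The main obstacle, compared to the interior case, is the bookkeeping of the slices meeting $\partial \GG$, since there the calibration has to be extended across the boundary and the trace energy on $\partial\GG$ must be absorbed; this is handled by the pigeonhole on $s$ and relies crucially on the hypothesis $\int_{\partial \GG} \eed(u_\ep)\,d\H^2 \leq K\de^{-1}M_\ep$ provided by Lemma~\ref{Lemma:Grid}. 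The weighted structure only enters through the single factor $\min_{\overline\Theta}\pmin^2$, which, being independent of $s$, requires no further modification of the argument.
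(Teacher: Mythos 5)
Your proposal is correct and takes essentially the same approach as the paper: the paper itself describes the proof as a minor modification of \cite{roman-vortex-construction}*{Proposition 7.1}, obtained by replacing Corollary~3.1 there with Corollary~\ref{ball2}, slicing $\Theta$ by level sets of the smoothed boundary calibration from \cite{roman-vortex-construction}*{Appendix B}, and observing that since each slice $\Sigma$ lies in $\overline\Theta$ the factor $\min_{\Sigma}\pmin^2 \geq \min_{\overline\Theta}\pmin^2$ can be pulled out uniformly. You have correctly identified all of the key ingredients (the boundary calibration, Corollary~\ref{ball2}, the pigeonhole on slices, and the localization of the weight), and the extra detail you supply on the coarea/pigeonhole step matches the structure of the argument being cited.
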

Once again, by replacing \cite{roman-vortex-construction}*{Corollary 3.1} by Corollary \ref{ball2}, the proof of this result is a minor modification of the proof of \cite{roman-vortex-construction}*{Proposition 7.1}. In $\Theta$, one integrates over slices $\Sigma$ of the smooth approximation of the calibration function, which makes appear the factor $\min_{\Sigma}\pmin^2$ on the lower bound. Since the level sets are contained in $\overline \Theta$, we have $\min_{\Sigma}\pmin^2\geq \min_{\overline \Theta} \pmin^2$, which can then be factored out, allowing the proof to continue exactly as in \cite{roman-vortex-construction}.

\subsection{Proof of the \texorpdfstring{$\ep$}{ε}-level estimates}
First, by replacing \cite{roman-vortex-construction}*{Theorem 4.1} with Proposition \ref{Teo:2dEstimate}, the proof of \eqref{vorticity estimate}, that is, the vorticity estimate part of the $\ep$-level estimates, is exactly the same as the one presented in \cite{roman-vortex-construction}*{Section 8.1}. Next, we present the proof of the lower bound \eqref{weighted lower bound}.
\begin{proof}
	By gauge invariance of the energy $\fen(u_\ep,A_\ep)$, without loss of generality we can assume that $A_\ep$ is in the Coulomb gauge, that is,
	$$
	\diver A_\ep=0 \ \mathrm{in}\ \Omega
	\quad \mathrm{and}\quad A_\ep\cdot \nu =0 \ \mathrm{on}\ \partial\Omega.
	$$
	It directly follows that
	$$
	\|A_\ep\|_{H^1(\Omega,\R^3)}\leq C\|\curl A_\ep\|_{L^2(\Omega, \R^3)},
	$$
	where throughout the proof $C>0$ denotes a constant that may change from line to line. By Sobolev embedding, we then have 
	$$
	\|A_\ep\|_{L^p(\Omega,\R^3)}\leq C\|A_\ep\|_{H^1(\Omega,\R^3)}
	$$
	for any $1\leq p\leq 6$.
	
	Observe that
	\begin{equation}\label{eqqq1}
	\frac12\int_\Omega \pmin^2|\nabla u_\ep|^2\leq \int_\Omega \pmin^2|\nabla_{A_\ep} u_\ep|^2 +\pmin^2|u_\ep|^2|A_\ep|^2 \leq \int_\Omega |\nabla_{A_\ep} u_\ep|^2+|u_\ep|^2|A_\ep|^2.
	\end{equation}
	Using H\"older's inequality, we find
	\begin{align*}
	\int_\Omega |u_\ep|^2|A_\ep|^2 &\leq \int_\Omega |A_\ep|^2+\int_\Omega (|u_\ep|^2-1)|A_\ep|^2\\
	&\leq \|A_\ep\|_{L^6(\Omega)}^2|\Omega|^\frac23+\||u_\ep|^2-1\|_{L^2(\Omega)}\|A_\ep\|_{L^6(\Omega)}^2|\Omega|^\frac16\\
	&\leq C\|\curl A_\ep\|_{L^2(\Omega)}^2(1+\ep F_\ep(u_\ep,A_\ep)^\frac12)\\
	&\leq C \fen(u_\ep,A_\ep)(1+\ep \fen(u_\ep,A_\ep)^\frac12).
	\end{align*}
	Combining this with \eqref{eqqq1}, we find
	$$
	\een(u_\ep)\leq C \fen(u_\ep,A_\ep).
	$$
    
	Now, we consider the grid $\GG(b_\ep,\de)$ given by Lemma \ref{Lemma:Grid}. It is easy to see that, up to an adjustment of the constant appearing in the lemma, we can require our grid to additionally satisfy the inequalities
	\begin{align*}
		\int_{\RRR_1(\GG(b_\ep,\de))}\eed(u_\ep)d\H^1&\leq C\de^{-2}\fen(u_\ep,A_\ep),\\ \int_{\RRR_2(\GG(b_\ep,\de))}\eed(u_\ep)d\H^2&\leq C\de^{-1}\fen(u_\ep,A_\ep).
	\end{align*}
	Let us now consider the vorticity approximation $\nu_\ep$ defined in Section~\ref{sec:constrvortapprox} together with the set $S_{\nu_\ep}$ (recall Lemma~\ref{Lemma:support}). Observe that
	\begin{align*}
		\int_{S_{\nu_\ep}} \pmin^2|\nabla u_\ep|^2&= \int_{S_{\nu_\ep}} \pmin^2|\nabla_{A_\ep} u_\ep|^2 +\pmin^2|u_\ep|^2|A_\ep|^2+2\pmin^2\langle \nabla_{A_\ep}u_\ep, 	i u_\ep A_\ep \rangle\\
		&\leq \int_{S_{\nu_\ep}} \pmin^2|\nabla_{A_\ep} u_\ep|^2+\pmin^2|u_\ep|^2|A_\ep|^2+2\pmin^2| \nabla_{A_\ep}u_\ep||u_\ep| |A_\ep|.
	\end{align*}
	Let $t_\ep>0$. Using that $2xy\leq t_\ep x^2+\frac1{t_\ep}y^2$, we have that
	$$
	\int_{S_{\nu_\ep}}2\pmin^2| \nabla_{A_\ep}u_\ep||u_\ep| |A_\ep|\leq t_\ep\int_{S_{\nu_\ep}}\pmin^2| \nabla_{A_\ep}u_\ep|^2+\frac{1}{t_\ep}\int_{S_{\nu_\ep}}\pmin^2 |u_\ep|^2 |A_\ep|^2.
	$$
	We then deduce that (recall that $\pmin^2\leq 1$)
	\begin{align*}
	\int_{S_{\nu_\ep}} \pmin^2|\nabla u_\ep|^2&\leq (1+t_\ep)\int_{S_{\nu_\ep}} \pmin^2|\nabla_{A_\ep} u_\ep|^2+\left(1+\frac1{t_\ep}\right)\int_{S_{\nu_\ep}}|u_\ep|^2|A_\ep|^2\\
	&=(1+t_\ep)\int_{S_{\nu_\ep}} \pmin^2|\nabla_{A_\ep} u_\ep|^2+\left(1+\frac1{t_\ep}\right)\left(\int_{S_{\nu_\ep}}(|u_\ep|^2-1)|A_\ep|^2+|A_\ep|^2\right).
	\end{align*}
	Using H\"older's inequality, we find
	$$
	\int_{S_{\nu_\ep}}(|u_\ep|^2-1)|A_\ep|^2\leq \||u_\ep|^2-1\|_{L^2(S_{\nu_\ep})}|S_{\nu_\ep}|^{\frac16}\|A_\ep\|^2_{L^6(S_{\nu_\ep},\R^3)}
	$$
	and
	$$
	\int_{S_{\nu_\ep}}|A_\ep|^2\leq |S_{\nu_\ep}|^{\frac23}\|A_\ep\|_{L^6(S_{\nu_\ep},\R^3)}^2.
	$$
	Hence, recalling \eqref{crucialcomparisonenergy}, we deduce that
	\begin{multline*}
	\int_{S_{\nu_\ep}}\pmin^2|\nabla u_\ep|^2	\leq \int_{S_{\nu_\ep}} \pmin^2|\nabla_{A_\ep} u_\ep|^2
	+t_\ep \fen(u_\ep,A_\ep)\\+C \left(1+\frac1{t_\ep}\right)   \fen(u_\ep,A_\ep)\left(\ep\fen(u_\ep,A_\ep)^\frac12|S_{\nu_\ep}|^{\frac16}+|S_{\nu_\ep}|^{\frac23}\right).
\end{multline*}
	We let $t_\ep=|\log\ep|^{-(m+n)}$ and choose $\delta=\delta(\ep)$ such that $\delta\leq |\log\ep|^{-3(m+n)}$ (the final choice of this parameter will come after). Note that, $t_\ep \fen (u_\ep,A_\ep)\leq C|\log\ep|^{-n}$, $|S_{\nu_\ep}|\leq C\de=C|\log\ep|^{-3(m+n)}$, and 
	$$
	\left(1+\frac1{t_\ep}\right)   \fen(u_\ep,A_\ep)\left(\ep \fen(u_\ep,A_\ep)^\frac12|S_{\nu_\ep}|^{\frac16}+|S_{\nu_\ep}|^{\frac23}\right)\leq C|\log\ep|^{-n}.
	$$
	In particular,
	\begin{equation}\label{energy2}
	\int_{S_{\nu_\ep}}\pmin^2|\nabla u_\ep|^2	\leq \int_{S_{\nu_\ep}} \pmin^2|\nabla_{A_\ep} u_\ep|^2+C|\log\ep|^{-n}.
	\end{equation}
	
	\medskip 
	We now apply Proposition \ref{propcubes} and Proposition \ref{propboundary} with $b=n$ and $q=3(m+n)$. This yields the existence of $\ep_0>0$, depending only on $m,n,M,$ and $\partial\Omega$, such that, for any $0<\ep<\ep_0$,
	$$
	E_\ep(u_\ep,S_{\nu_\ep})\geq \frac12\Lambda_\ep(\nu_\ep,\pmin)\left(\log \frac1\ep-\log C\frac{M_\ep^{124}|\log \ep|^{(20+1/3)(1+n)}}{\de^{123}}\right)-\frac{C}{|\log \ep |^{n}},
	$$
	where 
    \begin{equation}\label{Lambdaep}
    \Lambda_\ep(\nu_\ep,\pmin)=\min_{\overline \Theta}\pmin^2| \nu_{\ep,\Theta}|+\sum_{\CC_l\in \GG_0}\min_{\CC_l}\pmin^2|\nu_{\ep,\CC_l}|,
    \end{equation}
    $M_\ep=\fen(u_\ep,A_\ep)$, and $C$ is a constant depending only on $\partial\Omega$. 
	By combining this with \eqref{energy2}, we are led to 
	\begin{multline}\label{finalest1}
		\int_{S_{\nu_\ep}}\pmin^2|\nabla_{A_\ep}u_\ep|^2+\frac{\pmin^4}{2\ep^2}(1-|u_\ep|^2)^2+|\curl A_\ep|^2\\
		\geq
		\Lambda_\ep(\nu_\ep,\pmin)\left(\log \frac1\ep-\log C\frac{M_\ep^{124}|\log \ep|^{(20+1/3)(1+n)}}{\de^{123}}\right)-\frac{C}{|\log \ep |^{n}}.
	\end{multline}
	We have reached the point where the hypotheses on $\pmin$ are needed. On the one hand, since there exist $\alpha\in (0,1)$, $N>0$, and $C_1>0$ (that do not depend on $\ep$) such that
	$$
	\|\pmin\|_{C^{0,\alpha}(\Omega)}\leq C_1|\log\ep|^N,
	$$
	we deduce that, for any cube $\CC_l \in \GG_0$ and any $x\in \CC_l$, we have (recall that $\pmin\leq 1$)
	$$
	\left|\pmin^2(x)-\min_{\CC_l}\pmin^2\right|\leq 2\left|\pmin(x)-\min_{\CC_l}\pmin\right|\leq 2 C_1 \delta^\alpha |\log\ep|^N.
	$$
	It then follows that
	$$
	\left|\min_{\CC_l}\pmin^2|\nu_{\ep,\CC_l}|-|\pmin^2\nu_{\ep,\CC_l}| \right|\leq C \delta^\alpha |\log\ep|^N.
	$$
	By choosing $\delta=\delta(\ep)$ such that $\delta\leq |\log\ep|^{-\frac{n+N+1}{\alpha}}$, we conclude that 
	\begin{equation}\label{finalest2}
	\left|\min_{\CC_l}\pmin^2|\nu_{\ep,\CC_l}|-|\pmin^2\nu_{\ep,\CC_l}|\right||\log\ep|\leq C |\log\ep|^{-n}.
	\end{equation}
    
	On the other hand, since there exist $C_2>0$ and $\kappa>0$ (that do not depend on $\ep$) such that $a_\ep$ is constant in the set $\{ x\in \Omega \ | \ \dist(x,\partial\Omega)\leq C_2|\log\ep|^{-\kappa})\}$, from Proposition~\ref{prop:rho_ep like a_ep}, we deduce that $\pmin$ is almost constant in the same set (which we make precise below). Then, by choosing $\delta=\delta(\ep)$ such that $\delta\leq |\log\ep|^{-(\kappa+1)}$, we have that 
	$$
	\Theta \subset \{ x\in \Omega \ | \ \dist(x,\partial\Omega)\leq k(\ep)\},
	$$
	and therefore Proposition~\ref{prop:rho_ep like a_ep} implies that, for any $x\in \Theta$, we have
	$$
	\left|\pmin^2(x)-\min_{\overline{\Theta}}\pmin^2\right|\leq Ce^{-\frac{c}{\ep |\log\ep|^\kappa}}.
	$$ 
	Hence,
	\begin{equation}\label{finalest3}
	\left|\min_{\overline\Theta}\pmin^2|\nu_{\ep,\Theta}|-|\pmin^2\nu_{\ep,\Theta}| \right||\log\ep|\leq C |\log\ep|^{-n}.
	\end{equation}
	Finally, by choosing
	$$
	\delta=\delta(\ep)=\min\left(|\log\ep|^{-3(m+n)},|\log\ep|^{-\frac{n+N+1}{\alpha}}, |\log\ep|^{-(\kappa+1)} \right),
	$$
	by combining \eqref{finalest1}, \eqref{finalest2}, and \eqref{finalest3}, we find
	\begin{multline*}
		\int_{S_{\nu_\ep}}\pmin^2|\nabla_{A_\ep}u_\ep|^2+\frac{\pmin^4}{2\ep^2}(1-|u_\ep|^2)^2+|\curl A_\ep|^2\\
		\geq
		\left(|\pmin^2\nu_{\ep,\Theta}|+\sum_{\CC_l\in \GG_0}|\pmin^2\nu_{\ep,\CC_l}|\right)\left(|\log\ep|-C\log |\log\ep|\right)-\frac{C}{|\log \ep |^{n}}\\
		=|\pmin^2\nu_\ep|\left(|\log\ep|-C\log |\log\ep|\right)-\frac{C}{|\log \ep |^{n}}.
	\end{multline*}
    This concludes the proof of the lower bound.
\end{proof}

\begin{remark}\label{remark: hyp a_ep not satisfied}
We note that if the hypotheses on $a_\ep$ in Theorem~\ref{epleveltools} are not satisfied, then we obtain the lower bound
\begin{multline*}
		\int_{S_{\nu_\ep}}\pmin^2|\nabla_{A_\ep}u_\ep|^2+\frac{\pmin^4}{2\ep^2}(1-|u_\ep|^2)^2+|\curl A_\ep|^2\\
		\geq\Lambda_\ep(\nu_\ep,\pmin)\left(|\log\ep|-C\log |\log\ep|\right)-\frac{C}{|\log \ep |^{n}},
	\end{multline*}
where $\Lambda_\ep(\nu_\ep,\pmin)$ is defined in \eqref{Lambdaep}. By replacing \eqref{weighted lower bound} with this inequality, and performing a similar analysis to that of the article, one obtains a lower bound for the first critical field in terms of a modified (and significantly more intricate) version of the weighted isoflux problem, which, in particular, depends on the grid used in the proof of the $\ep$-level estimates.
\end{remark}

\bibliography{references3DPinning}
\end{document}